\documentclass[11pt]{amsart}
\usepackage[utf8]{inputenc}
\usepackage{a4wide}
\usepackage{amsmath,mathrsfs,eucal}
\usepackage{xcolor}
\usepackage{bbm,stix}
\usepackage{enumitem}
\usepackage{graphics} 
\usepackage{epsfig} 
\usepackage{graphicx,subfig}                     
\usepackage{wrapfig}
\usepackage{textcomp}
\usepackage{tikz}
\usepackage{pgfplots}
\usepackage{dsfont}

\usepackage{multicol}  
\usepackage{epstopdf}
\usepackage{hyperref}

\numberwithin{equation}{section}

\newcommand{\N}{\mathbb{N}}
\newcommand{\R}{\mathbb{R}}

\newcommand{\calC}{\mathcal{C}}
\newcommand{\calD}{\mathcal{D}}
\newcommand{\calF}{\mathcal{F}}
\newcommand{\calI}{\mathcal{I}}
\newcommand{\calK}{\mathcal{K}}
\newcommand{\calM}{\mathcal{M}}
\newcommand{\calR}{\mathcal{R}}

\newcommand{\bbmI}{\mathbbm{1}}

\newcommand{\sfX}{\mathsf{X}}
\newcommand{\sfF}{\boldsymbol{\mathsf{F}}}

\newcommand{\sfR}{\mathsf{R}}

\newcommand{\Lip}{\text{Lip}}
\newcommand{\sic}{\eta'_c(\rho_{i}^h)}
\newcommand{\sicm}{\eta'_c(\rho_{i-1}^h)}

\newcommand{\x}{\mathbf{x}}
\newcommand{\f}{\,\mathbf{f}}
\renewcommand{\j}{\mathbf{j}}
\newcommand{\intK}{\textrm{int\,}\calK}

\DeclareMathOperator*{\sign}{sign}

\newtheorem{mainthm}{Theorem}

\newtheorem{thm}{Theorem}[section]
\newtheorem{lemma}[thm]{Lemma}
\newtheorem{prop}[thm]{Proposition}

\newtheorem{remark}[thm]{Remark}

\theoremstyle{definition}
\newtheorem{defi}[thm]{Definition}

\title[On gradient flow and entropy solutions for transport equations]{On gradient flow and entropy solutions for nonlocal transport equations with nonlinear mobility}
\author{Simone Fagioli \and Oliver Tse}

\address[Simone Fagioli]{\newline Dipartimento di Ingegneria e Scienze dell’Informazione e Matematica\newline Universit\`a degli Studi dell’Aquila, Via Vetoio 1, 67100 Coppito, L’Aquila, It.}
\email{simone.fagioli@univaq.it}
\address[Oliver Tse]{\newline Department of Mathematics and Computer Science\newline
Eindhoven University of Technology, 5600 MB Eindhoven, The Netherlands}
\email{o.t.c.tse@tue.nl}
\date{\today}
\keywords{Deterministic particle approximation; Entropy solutions; Gradient flow; Nonlocal transport equations.} 
\subjclass[2010]{35A15;35A35;35D30;45K05;65M75}

\begin{document}

\maketitle
\begin{abstract}
  We prove the well-posedness of entropy solutions for a wide class of nonlocal transport equations with nonlinear mobility in one spatial dimension. The solution is obtained as the limit of approximations constructed via a deterministic system of interacting particles that exhibits a gradient flow structure. At the same time, we expose a rigorous gradient flow structure for this class of equations in terms of an Energy-Dissipation balance, which we obtain via the asymptotic convergence of functionals.
\end{abstract}

% \tableofcontents

\section{Introduction}

The main object of study in this article is the well-posedness of the following PDE
\begin{equation}\label{eq:main}
    \partial_t \rho_t = \partial_x \left(\vartheta(\rho_t)\, \partial_x \calF'(\rho_t)\right)\qquad\text{in $(0,\infty)\times\R$\,,}
\end{equation}
where $\vartheta:[0,\infty)\to[0,\infty)$ is a given {\em mobility} function, and $\calF$ is a driving energy functional of the form
\[
    \calF(\rho) = \int_\R V(x)\rho(x)\,dx + \frac{1}{2}\iint_{\R\times\R} W(x-y) \rho(x)\rho(y)\,dx\,dy\,,
\]
with external potential $V:\R\to\R$ and interaction potential $W:\R\to\R$. Here, $\calF'(\rho)$ denotes the variational derivative of $\calF$, which in this case, is given by $\calF'(\rho) = V + W\ast\rho$. Equation  \eqref{eq:main} should be understood in the distributional sense and is subjected to the initial datum $\rho(0,\cdot)=\bar\rho$. In this work, we are interested in mobility functions $\vartheta$ satisfying $\vartheta(0)=0$ and $\vartheta(M)=0$ for some $M>0$.

When $\vartheta(\rho)=\rho$, the equation reduces to a nonlocal transport equation, and the special case when $W$ is the Newtonian potential is of particular interest in the field of superconductivity and superfluidity \cite{AMS2011,AS2008,BLL2012,LZ2000,SV2014}. In this case, \eqref{eq:main} is known to possess a 2-Wasserstein gradient flow structure, and the, by now standard, AGS theory for gradient flows in metric spaces may be employed to study such equations for a general class of driving functionals $\calF$ \cite{AGS2008,CdiFFLS2011}. One way of rigorously expressing \eqref{eq:main} as a 2-Wasserstein gradient flow is by means of a family of evolution variational inequalities. This relies on the $\lambda$-convexity of the driving energy functional $\calF$ along geodesics of the 2-Wasserstein distance. In addition to well-posedness results, a zoo of numerical schemes have been developed that exploit the gradient structure of the equation (see \cite{CMW2021} for an overview).

When $\vartheta$ is nonlinear, equation \eqref{eq:main} can be derived from models that often appear in the transport phenomena of biological systems with overcrowding prevention \cite{BdiFD2006,CR2006,diFR2008,DB2015}, and in the studies of phase segregation, relaxation of fermionic gas and vortex formation in mathematical physics \cite{CG-CV2019,CG-CV2020,DGM2013,GL1998,Kani1995,Ott1999,Slep2008}. In contrast to the linear case, the geometric nature of \eqref{eq:main} is less understood (see \cite{diFra2016} for a short review) when $\vartheta$ is nonlinear. For example, when $V(x)=-x$ and $W\equiv 0$, \eqref{eq:main} reduces to the scalar conservation law
\[
    \partial_t\rho_t + \partial_x \vartheta(\rho_t) = 0\,,
\]
where $\vartheta$ now plays the role of the flux function. Therefore, providing a gradient flow structure to \eqref{eq:main} is akin to providing a gradient flow structure for a scalar conservation law. 

Since the early 2000s, researchers have been working to establish links between conservation laws and Wasserstein metrics. Two strands of research stand out: (a) contraction properties of solutions in Wasserstein spaces \cite{BBL2005,CdiFL2006,EGO2016}; and (b) the use of generalized transport distances with nonlinear mobility as metrics \cite{CLSS2010,DNS2009,LM2010}. Other notable works are \cite{BCdiFP2015,Bre2009}, where equivalent characterisations of entropy solutions to conservation laws are discussed. To the best of our knowledge, a general method to rigorously make sense of a scalar conservation law as a gradient flow is still missing.

Our work, which takes a different route, not only sheds new light onto the unveiling of a gradient flow structure for \eqref{eq:main}, but also provides the well-posedness (in the sense of entropy solutions) of \eqref{eq:main} for a general class of interaction potentials, including the Newtonian potential, which we believe to be novel. This route is inspired by deterministic particle approximations (DPA) designed for equations like \eqref{eq:main} \cite{diFFRo2017,diFFRa2019,diFR2008,FRa2018}, by recent advances in the theory of generalized gradient structures \cite{EPSS2021,PRST2020}, and by the asymptotic limits of such structures \cite{Mielke2016,SS2004}. More explicitly, we introduce a DPA for the approximation of \eqref{eq:main} that possesses a generalized gradient structure, and show that this approximation converges to the unique entropy solution of \eqref{eq:main}, which also exhibits a gradient structure. This choice of approximation restricts our analysis to the 1-dimensional setting, because its generalization to higher dimensions is not straightforward.

\medskip

We informally illustrate the methodology in more detail in the following.

\subsection{Background and methodology}\label{sec:method} We begin by noticing that \eqref{eq:main} may be expressed as
\begin{align}
	\partial_t\rho_t +\partial_x j_t &= 0\,, \tag{{CE}}\label{eq:intro-continuity} \\
	j_t &= -\vartheta(\rho_t)\,\partial_x\calF'(\rho_t)\,,\tag{{FF}}\label{eq:intro-force-flux}
\end{align}
where \eqref{eq:intro-continuity} says that the {\em density-flux} pair $(\rho,j)$ satisfies the {\em continuity equation}, while equation \eqref{eq:intro-force-flux} describes the relationship between the {\em force} $\partial_x\calF'(\rho)$ and the {\em flux} $j$, which we call the {\em force-flux relation}. By introducing a {\em dual dissipation potential} $\calR^*: L^1(\R)\times L^\infty(\R)\to[0,+\infty]$,
\[
	\calR^*(\rho,\xi) = \frac{1}{2}\int_\R |\xi(x)|^2\, \vartheta(\rho(x))\,dx\,,
\]
one notices that the force-flux relation \eqref{eq:intro-force-flux} may be expressed as
\[
	j = D_2\calR^*(\rho,-\partial_x \calF'(\rho))\,.
\]
Via Legendre-Fenchel duality, we obtain a variational characterization of \eqref{eq:intro-force-flux}:
\begin{align}\label{eq:variational-force-flux}
    \calR(\rho,j) + \calR^*(\rho,-\partial_x\calF'(\rho)) = \langle j,-\partial_x\calF'(\rho)\rangle\,,
\end{align}
where the {\em dissipation potential} $\calR$ is the Legendre dual of $\calR^*$ w.r.t.\ its second argument, i.e.,\
\[
	\calR(\rho,j) = \frac{1}{2}\int_\R \left| \frac{j(x)}{\vartheta(\rho(x))}\right|^2 \vartheta(\rho(x))\,dx\,.
\]

This dissipation potential $\calR$ gives rise to a generalized transport cost that falls within the theory developed in \cite{DNS2009,LM2010}, and was used in \cite{CLSS2010} to construct gradient flow solutions for equations of the type \eqref{eq:main} whose driving functionals contain only the internal energy. In fact, it was formally shown in \cite{CLSS2010} that the driving functional $\calF$ we consider here cannot be $\lambda$-convex unless $\vartheta$ is linear, thus invalidating their method for nonlinear mobilities---this suggests the need for a different method.

Our approach does not rely on the $\lambda$-convexity of $\calF$ and makes sole use of the variational form of the force-flux relation \eqref{eq:variational-force-flux}. Observe that for any sufficiently smooth density-flux pair $(\rho,j)$ satisfying \eqref{eq:intro-continuity}, the right-hand side of \eqref{eq:variational-force-flux} may be expressed, via the chain rule, as
\begin{align}\label{eq:intro-chainrule}\tag{{CR}}
	\langle j_t,-\partial_x\calF'(\rho_t)\rangle = -\langle \partial_x j_t,-\calF'(\rho_t)\rangle = -\langle \partial_t\rho_t,\calF'(\rho_t)\rangle = -\frac{d}{dt}\calF(\rho_t)\,.
\end{align}
Integrating \eqref{eq:variational-force-flux} over any interval $(s,t)\subset(0,T)$ yields the {\em Energy-Dissipation balance}
\begin{align}\label{eq:intro-EDB}\tag{{EDB}}
	\int_s^t \calR(\rho_r,j_r) + \calR^*(\rho_r,-\partial_x\calF'(\rho_r))\,dr = \calF(\rho_s) - \calF(\rho_t)\,.
\end{align}
Equations \eqref{eq:intro-continuity} and \eqref{eq:intro-EDB} then allows us to define a notion of gradient flow solution for \eqref{eq:main}. Morally, any pair $(\rho,j)$ satisfying \eqref{eq:intro-continuity} and \eqref{eq:intro-EDB} is said to be an $(\calF,\calR,\calR^*)$-gradient flow solution of \eqref{eq:main}, if it satisfies, additionally, the chain rule \eqref{eq:intro-chainrule}. % \oliver{no need to include?}
% \begin{align}
% 	\frac{d}{dt}\calF(\rho_t) = \langle j,\partial_x\calF'(\rho_t)\rangle\qquad\text{for almost every $t\in(0,T)$}\,.
% \end{align}
Notice that \eqref{eq:intro-chainrule} and \eqref{eq:intro-EDB} characterize the flux $j$ in terms of the force $\partial_x\calF'(\rho)$ via the dual dissipation potential $\calR^*$, i.e.\ together, they give \eqref{eq:intro-force-flux}, and hence a distributional solution for \eqref{eq:main}.

\medskip

As mentioned above, our method for constructing an $(\calF,\calR,\calR^*)$-gradient flow solution $(\rho,j)$ of \eqref{eq:main} relies on a deterministic particle approximation for \eqref{eq:main}. More concretely, we consider a microscopic system of $N\in\N$ {\em particles} $\x = (\x_0,\ldots,\x_N)^\top\in\R^N$ that evolves according to the system
\begin{equation}\tag{{DPA}}\label{eq:particle_complete}
\left\{\qquad\begin{aligned}
    \dot{\x}_0 &= -\beta(\rho_0^h)\f_0^- - \beta(0)\f_0^+\\
    \dot\x_i &= -\beta(\rho_i^h)\f_i^--\beta(\rho_{i-1}^h)\f_i^+ \\
    \dot{\x}_N &= -\beta(0)\f_N^- -\beta(\rho_{N-1}^h)\f_N^+
\end{aligned}\qquad\text{for $i=1,\ldots,N-1$}\,,\right.
\end{equation}
subjected to the initial condition $\bar \x$ defined in \eqref{eq:dscr_IC_un}. Here, $\beta(\rho):=\vartheta(\rho)/\rho$,
\begin{equation*}%\label{eq:disc_density}
    \rho_i^h (t) = \frac{h}{\x_{i+1}(t)-\x_i(t)},\qquad i=0,\ldots,N-1,\qquad h=\frac{m}{N}\,,
\end{equation*}
with $m>0$ being the initial mass of the density,
\begin{equation*}%\label{eq:point_functional}
	\f_i = V'(\x_i) + \sum\nolimits_{j\ne i} h W'(\x_i-\x_j)\,,\qquad i=0,\ldots,N\,,
\end{equation*}
and where, $\f_i^+$ and $\f_i^-$ denote the positive and negative part of $\f_i$, i.e. $\f_i=\f_i^++\f_i^-$.

The well-posedness of \eqref{eq:particle_complete} for a wide class of interaction potentials, including the Newtonian potential, and a priori estimates for the solution is given in Section~\ref{sec:DPA}. It is also shown there that the system \eqref{eq:particle_complete} exhibits a generalized gradient structure, i.e.\ there is a triplet $(\calF_h,\calR_h, \calR_h^*)$ such that the unique solution $\x$ of \eqref{eq:particle_complete} is a $(\calF_h,\calR_h, \calR_h^*)$-gradient flow solution in the sense given above with the continuity equation $\dot\x = \j$ and force-flux relation $\j=D_2\calR_h^*(\x,-\calF_h'(\x))$.

\medskip

Having solved this system for $\x$, we then consider the pair $(\hat\rho^h,\hat\jmath^h)$ given by
\begin{equation}\label{eq:piece_den_intro}
	\hat\rho_t^h(x) := \sum_{i=0}^{N-1} \rho_i^h(t) \bbmI_{K_i(t)}(x)\,,\qquad \hat \jmath_t^h(x) := \sum_{i=0}^{N-1} \rho_i^h(t)\, u_i^h(t,x) \bbmI_{K_i(t)}(x)\,,
\end{equation}
where $K_i$ is the interval $K_i = (\x_i,\x_{i+1})$ with length $|K_i(t)|:=|\x_{i+1}(t) - \x_i(t)|$, and
\[
	u_i^h(t,x) := \frac{\x_{i+1}(t)-x}{|K_i(t)|}\dot \x_i(t) + \frac{x-\x_i(t)}{|K_i(t)|}\dot \x_{i+1}(t)\qquad \text{for $x\in K_i$,\; $i=0,\ldots,N-1$\,.}
\]
By construction, the pair $(\hat\rho^h,\hat\jmath^h)$ satisfies the continuity equation \eqref{eq:intro-continuity} (cf.\ Lemma~\ref{lem:reconstruction-continuity}). Under appropriate assumptions on the initial datum $\bar\rho$, we prove in Section~\ref{sec:Continuous} that the sequence $(\hat\rho^h,\hat\jmath^h)_{h>0}$ admits an accumulation point $\rho\in L^1((0,T)\times\R)$ with respect to the strong topology, and a corresponding flux $j$. In Section~\ref{sec:convergence}, this accumulation point is shown to be an $(\calF,\calR,\calR^*)$-gradient flow solution in the sense described above, as well as an entropy solution of \eqref{eq:main} in the Kružkov sense \cite{Kru1970}.
% ---we call such solutions {\em GFE solutions}. 
Since entropy solutions are unique, we ultimately obtain the convergence of the full sequence $(\hat\rho^h)_{h>0}$ to the entropy solution of \eqref{eq:main}, and consequently also the gradient flow solution.

\medskip

We stress once more that this work provides not only the well-posedness and the uncovering of a gradient flow structure for \eqref{eq:main}, but it also introduces a simple numerical scheme for the approximation of these solutions. While the results in this paper are stated for the full space $\R$, the strategy can be easily adapted to bounded intervals with different boundary conditions (eg.\ Dirichlet, Neumann, periodic boundary). A clear drawback, however, of the current deterministic particle approximation lies in the fact that it works only in one spatial dimension.

\subsection{Notation, assumptions and definitions}\label{sec:defi} 
This section is devoted to setting the notation, assumptions and introducing definitions that will be used throughout the paper.
% and stating the main result of the paper. 

\subsubsection*{Notation} With the symbol $C_c(\R)$ we will denote the space of continuous functions on $\R$ with compact support, and with $C_0(\R)$ its completion with respect to the supremum norm. We denote by $\calM(\R)$ the space of Borel measures $\mu$ on $\R$ with finite total variation norm
\[
    |\mu|(\R) :=\sup\left\{ \int_\R \varphi\,d\mu\,:\, \varphi\in C_0(\R),\; \|\varphi\|_{\infty}\le 1\right\},
\]
and by $\calM^+(\R)$ the collection of $\mu\in\calM^+(\R)$ that are non-negative.  It follows from the Riesz theorem that $\calM(\R)$ can be identified with the dual of $C_0(\R)$ by the duality pairing
\[
    \langle \varphi,\mu\rangle :=\int_\R \varphi\,d\mu\qquad\text{for all $\varphi\in C_0(\R)$}\,.
\]
We will consider the corresponding weak-$*$ topology on $\calM(\R)$, where precompactness in this topology is equivalent to boundedness in the total variation norm.

We also consider the space of functions with bounded variation
\[
    BV(\R) := \Bigl\{ f\in L^1(\R)\,:\, Df\in\calM(\R)\Bigr\}\,,
\]
equipped with the norm
\[
    \|f\|_{BV(\R)} := \|f\|_{L^1(\R)} + |Df|(\R)\,,
\]
where $Df$ denotes the distributional derivative $f$.

\medskip

\subsubsection*{Assumptions} We first set the assumptions on the initial datum $\bar{\rho}$. In all cases, we will assume that
\begin{itemize}
\item[(In1)] $\bar{\rho} \in BV(\R)\cap L^\infty(\R)$, $\bar\rho\ge 0$, with finite mass $\int_\R \bar{\rho}(x)\, dx = m$, for some $m>0$, and has compact support.
\end{itemize}
In certain situations, we will need the following additional assumption on $\bar\rho$:
\begin{itemize}
    \item[(In2)] There is some $\sigma >0$ such that $\bar{\rho}(x)\ge \sigma$ for every $x\in \text{supp}(\bar\rho)$.
\end{itemize}

A crucial role is played by the map $\beta:[0,\infty)\to[0,\infty)$ that we assume to satisfy:
\begin{itemize}
\item[(A-$\beta$)] $\beta \in \Lip([0,+\infty))$ is a decreasing function such that $\beta(0)=\beta_{max}>0$, $\beta(M_\beta)=0$ and $\beta\equiv 0$ on $[M_\beta,+\infty)$ for some $M_\beta>0$. 
\end{itemize}

\begin{remark}
    The assumption (A-$\beta$) on $\beta$ includes maps of the form
    \[
      \beta(s)=(M_\beta-s^\gamma)_{+}, \qquad \gamma\geq 1,
    \]
    that commonly appear in traffic flow or in biological models \cite{BdiFD2006,CR2006,DB2015}. Such types of maps $\beta$ give rise to mobility functions $\vartheta$ that are concave. The concavity of $\vartheta$ is essential in the works \cite{CLSS2010,DNS2009,LM2010}, but not required in our approach.
\end{remark}

As mentioned in the introduction, the driving energy functional takes the form
\begin{equation}\label{eq:energy_fun_intro}
    \calF(\rho) = \int_\R V(x)\rho(x)\,dx + \frac{1}{2}\iint_{\R\times\R} W(x-y) \rho(x)\rho(y)\,dx\,dy\,,
\end{equation}
where the external potential $V:\R\to\R$ is assumed to satisfy the following property:
\begin{itemize}
    \item[(A-V)] $V \in C^2(\R)$ is an external potential with $V'$ having linear growth, i.e.
    \begin{equation}\label{eq:linear-growth}
        |V'(r)| \le c_V\bigl(1 + |r|\bigr),\qquad |r|>0\,,
    \end{equation}
    for some constant $c_V>0$.
\end{itemize}
As for the interaction potential $W$, we will distinguish between three different classes. We assume that $W:\R\to\R$ is a radially symmetric nonlocal potential and we further say that
\begin{itemize}
    \item[(A-Wr)] $W$ is a {\em regular potential} if $W\in\calC^1(\R)$ with $W'$ in the Sobolev space $W^{2,\infty}(\R)$ and having linear growth \eqref{eq:linear-growth} with some constant $c_W>0$.

%   \item[(A-Ws)]  $W$ is a (weakly) irregular potential if $W \in L^1_{loc}(\R)$, $W\in C^2(\R\setminus \{0\})$ with $W'$ bounded, and such that for $\alpha\in(0,1)$ there exists a constant $c_W$ such that
%   \[
%     W''(r)\leq c_W \bigl(1+r^{-\alpha}\bigr)\qquad \mbox{for all } r>0;
%   \]
   \item[(A-Wn)] $W$ is a {\em Newtonian potential}, namely $W(x)=\pm |x|$, where the sign depends on whether it is attractive or repulsive;
   \item[(A-Wm)] $W$ is a {\em mild potential} if $W\in \Lip_{loc}(\R)$ with $W'$ in the Sobolev space $W^{2,\infty}(\R\setminus\{0\})$ and having linear growth \eqref{eq:linear-growth} with some constant $c_W>0$.
%   \[
%     W'''(r)\leq c_W \bigl(1+r^{-\alpha}\bigr)\,,\qquad r>0\,,
%   \]
%   for some constant $c_W>0$ and $\alpha\in(0,1)$.
\end{itemize}

\begin{remark}
    Notice that interaction potentials $W$ satisfying (A-Wr) or (A-Wn) are contained in (A-Wm), i.e.\ regular potentials and Newtonian potentials are mild potentials.
\end{remark}

In the following we will make use of the following compact notation for the force:
\[
     \sfF_\rho(t,x) := \partial_x\calF'(\rho) =  V'(x) + (W'\star\rho_t)(x)\,.
\]

\begin{remark}\label{rem:Newtonian-Lipschitz}
    If $W$ satisfies (A-Wn), then $\sfF_\rho$ takes the simple form
    \begin{align}\label{eq:newtonian-force}
        \sfF_\rho(t,x) = V'(x) \,\pm\, \left(2\int_{-\infty}^x\rho_t(y)\,dy - m\right),
    \end{align}
    where the sign depends on whether $W$ is attractive or repulsive. In particular, $x\mapsto\sfF_\rho(t,x)$ is globally Lipschitz for almost every $t\in [0,T]$ whenever $\rho_t\in L^\infty(\R)$.
\end{remark}

\begin{remark}
    Interaction potentials $W$ satisfying (A-Wm) include Morse type potentials of the form
    \[
        W(x) = -c_A e^{-|x|/\ell_A} + c_R e^{|x|/\ell_R}\,,
    \]
    where $c_A>0$, $c_R>0$ are the attractive and repulsive strengths respectively, and $\ell_A>0$, $\ell_R>0$ are their respective length scales \cite{CRP2013}.
\end{remark}

\subsubsection*{Definitions} In the following, we define what we mean by a gradient flow solution of \eqref{eq:main}, and we recall the notion of entropy solutions (in the Kružkov sense) to such equations. We begin with the definition of a pairs $(\rho,j)$ satisfying the continuity equation introduced above. 

% where $\calM^+(\R)$ denotes the space of non-negative finite Borel measures on $\R$, equipped with the weak-$*$ convergence, i.e.\ convergence against continuous functions that vanish at infinity (denoted by $C_0(\R)$)

\begin{defi}\label{def:cont-eq}
    We denote by $\mathcal{CE}(0,T)$ the set of pairs $(\rho,j)$ given by
    \begin{enumerate}[label=(\roman*)]
        \item a weakly-$*$ continuous curve $[0,T]\ni t\mapsto \rho_t\in\calM^+(\R)$, and
        \item a measurable family $j=\{j_t\}_{t\in[0,T]}\subset \calM(\R)$ with $\int_0^T |j_t|(\R)\,dt<\infty$\,,
    \end{enumerate}
    satisfying the continuity equation
	\begin{align}\label{eq:cont-eq}
    	\partial_t \rho_t + \partial_x j_t &= 0\,,\tag{{CE}} %\\
%    \langle \nabla\varphi,\hat\jmath^h\rangle &= \sum_{i} h \frac{d}{dt}\left( \intbar_{K_i}\varphi(x)\,dx\right)\qquad\forall\,\varphi\in\calC_c(\R)
	\end{align}
	in the following sense: For any $0\le s<t\le T$,
	\[
		\langle \varphi,\rho_t\rangle - \langle \varphi,\rho_s\rangle = \int_s^t \langle \partial_x\varphi,j_r\rangle\,dr\qquad\text{for all $\varphi\in\Lip_b(\R)$}\,.
	\]
	Here, $\Lip_b(\R)$ denotes the space of bounded Lipschitz functions.
\end{defi}

\begin{defi}[Gradient flow solutions]\label{def:grad_flow}
    A pair $(\rho,j)\in\mathcal{CE}(0,T)$ is said to be an $(\calF,\calR,\calR^*)$-gradient flow solution of \eqref{eq:main} with initial datum $\bar\rho\in \calM^+(\R)$ satisfying (In1), if is satisfies 
    \begin{enumerate}[label=(\roman*)]
        \item $\sup_{t\in[0,T]} \text{supp}(\rho_t)<\infty$;
        \item $\rho_t \rightharpoonup^* \bar\rho$ weakly-$*$ in $\calM^+(\R)$ (i.e.\ in duality against $C_0(\R)$ functions) as $t\to 0$;
        \item the Energy-Dissipation balance
        \begin{align}\tag{{EDB}}
	\int_s^t \calR(\rho_r,j_r) + \calR^*(\rho_r,-\sfF_\rho(r,\cdot))\,dr = \calF(\rho_s) - \calF(\rho_t)\,.
\end{align}
        for any interval $(s,t)\subset[0,T]$; and
        \item the chain rule
        \begin{align}\tag{{CR}}
	\frac{d}{dt}\calF(\rho_t) = \langle \sfF_{\rho}(t,\cdot),j_t\rangle\qquad\text{for almost every $t\in(0,T)$}\,.
\end{align}
    \end{enumerate}
\end{defi}

\begin{remark}\label{rem:weak-solution}
We recall from the discussion above that $(\calF,\calR,\calR^*)$-gradient flow solutions of \eqref{eq:main} are also weak solutions in the sense that $\rho$ satisfies
\[
    \langle \varphi,\rho_t\rangle - \langle \varphi,\rho_s\rangle = -\int_s^t \langle \partial_x\varphi,\vartheta(\rho_r)\,\partial_x\calF'(\rho_r)\rangle\,dr\qquad\text{for any $\varphi\in\Lip_b(\R)$,  $(s,t)\subset(0,T)$}\,.
\]
Indeed, the Energy-Dissipation balance and chain rule implies that
\[
    j_t(dx) = -\vartheta(\rho_t(x))\,\sfF_\rho(t,x)dx\qquad\text{for almost every $t\in[0,T]$}.
\]
The fact that the pair $(\rho,j)$ satisfies the continuity equation \eqref{eq:cont-eq} then yields the assertion.
\end{remark}

\begin{defi}[Entropy solutions]\label{def:entropy_sol}
Let $\bar{\rho}\in L^1\cap L^\infty(\R)$. We call a measurable family $\{\rho_t\}_{t\in[0,T]}\subset L^1(\R)$ an entropy solution of \eqref{eq:main} with initial datum $\bar{\rho}$ if $\rho\in L^\infty([0,T]; L^1(\R))$ and, for any constant $c>0$ and all $\varphi \in C_c^\infty([0,T)\times\R)$ with $\varphi\geq 0$, the following {\em entropy inequality} holds:
\begin{align}\label{eq:EI}\tag{EI}
\begin{aligned}
   0\leq &\int_\R |\bar\rho- c|\varphi(0,\cdot) \,dx\\
   & +  \int_0^T\int_\R |\rho_t- c|\partial_t\varphi-\sign(\rho_t-c)\left[\left(\vartheta(\rho_t)-\vartheta(c)\right)\sfF_\rho\,\partial_x\varphi-\vartheta(c)\,\partial_x\sfF_\rho\,\varphi\right] dx \,dt.
\end{aligned}
\end{align}
\end{defi}

\medskip

This definition of entropy solutions coincides with the one used in \cite{diFFRa2019}, and is contained within the well-known notion of entropy solutions by Kružkov \cite{Kru1970} (see also \cite{KR2003,Vol1967}) that is known to assert the uniqueness of solutions to \eqref{eq:main}.

\subsection{Origins of the DPA and connections to related schemes}
A formal way for discovering the relation between \eqref{eq:main} and \eqref{eq:particle_complete} is to formulate the equation for $\rho$ in terms of its {\em pseudo-inverse distribution function}.  We  briefly describe the idea below: let $\sfR$ be the cumulative distribution function associated to $\rho$ and define $\sfX$ to be the corresponding pseudo-inverse function, defined by
 \[
  \sfX(z)=\inf_{x\in\R}\left\{\sfR(x)>z\right\},\qquad \mbox{for }z\in\left[0,m\right],
 \]
 where $m$ denotes the total mass of $\rho$, i.e.\ $m=\int_\R \rho\,dx$.
 Formally, $\sfX$ satisfies the following PDE
 \begin{equation}\label{eq:pseudo}
     \partial_t \sfX  = -\beta\left(\frac{1}{\partial_z\sfX(t,z)}\right) \sfF_\rho(t,\sfX(t,z))\,,
 \end{equation}
 with
 \[
 \sfF_\rho(t,\sfX(t,z))=V'(\sfX(t,z))+\int_0^m W'(\sfX(t,z)-\sfX(t,\zeta))\,
  d\zeta.
 \]
  For $N\in\mathbb{N}$, we consider a uniform partition of the interval $\left[0,m\right]$, $z_i=hi$, $i=0,\ldots,N$ with $h=m/N$ being the size of each partition. By calling $\sfX(t,z_i)=\sfX_i(t)$, a forward finite difference discretization on \eqref{eq:pseudo} yields
 \begin{equation}\label{eq:pseudo_dif}
 \dot\sfX_i(t) = -\beta\left(\frac{h}{\sfX_{i+1}(t)-\sfX_{i}(t)}\right)\sfF_\rho(t,\sfX_i(t)).
 \end{equation}
In order to get a better numerical approximation of \eqref{eq:pseudo}, for example in case of a force field $\sfF_\rho(t,x)$ with possible changes in sign, we can consider the following \emph{upwind}-type discretization
 \begin{equation}\label{eq:pseudo_vol}
 \dot\sfX_i(t) = -\beta\left(\frac{h}{\sfX_{i+1}(t)-\sfX_{i}(t)}\right)\sfF_\rho(t,\sfX_i(t))^--\beta\left(\frac{h}{\sfX_{i}(t)-\sfX_{i-1}(t)}\right)\sfF_\rho(t,\sfX_i(t))^+,
 \end{equation}
which results in \eqref{eq:particle_complete} after a further approximation of the integral term in $\sfF_\rho$.

Let us stress that the idea of using the pseudo-inverse formulation in order to obtain a numerical scheme for the corresponding PDEs dates back to the works \cite{GT1,GT2}, where the authors present a time-space discretization of the pseudo-inverse equation related to the classical porous medium equation (see also \cite{CDMM,CHPW}). Convenient modifications of \eqref{eq:pseudo_dif} were used in \cite{diFFRa2019,diFSt} in order to prove convergence of the resulting \eqref{eq:particle_complete} to equations in form \eqref{eq:main} with only attractive non-local kernels $W$, \cite{diFFRa2019}, or only external forces $V$, \cite{diFSt}. In particular in \cite{diFSt}, the authors needs to design different schemes for different choice on $V$ depending on its sign. 

The DPA can also be related to the moving mesh schemes  \cite{BHJ11,BHR96,BHR09,CHR02,CHR03,SMR01} that are largely applied in several context such as diffusion problems, computational fluid dynamics and scalar conservation laws. Let us stress once more that the one-dimesional limitation is present also for those scheme, in particular in its application to scalar conservation laws.

Even if a complete numerical analysis of \eqref{eq:particle_complete}, i.e.\ stability and error analysis, is out of the scope of this paper, we would like to emphasize some of the advantages of the method, that are: (a) its flexibility in including a large class of potentials and kernels $V$ and $W$, (b) the natural energy-dissipation property of \eqref{eq:particle_complete}, and (c) its easy implementation, see Sections \ref{sec:discrete-GF} and \ref{sec:num} respectively. While properties (a) and (c) are present in many other numerical scheme, we believe that property (b) is one that is uncommon and sets our scheme apart from the rest.

\subsection{Main results}
We are now in the position to state our main results.

\begin{mainthm}\label{mainthm:regular-newtonian}
    Let $\bar\rho$, $\beta$ and $V$ satisfy (In1), (A-$\beta$) and (A-V) respectively. If $W$ satisfies either (A-Wr) or (A-Wn), then for any $T>0$, the sequence of piecewise constant interpolation $\{\hat{\rho}^h\}_{h\in(0,1)}$ defined in \eqref{eq:piece_den_intro} converges in $L^1\left([0,T]\times \R\right)$ to a function $\rho$ as $h\to 0$. Moreover, 
    \begin{enumerate}[label=(\roman*)]
        \item there exists a measurable family $j=\{j_t\}_{t\in[0,T]}$ such that the pair $(\rho,j)\in\mathcal{CE}(0,T)$ and is an $(\calF,\calR,\calR^*)$-gradient flow solution of \eqref{eq:main} with initial datum $\bar\rho$ in the sense of Definition~\ref{def:grad_flow};
        \item $\rho$ is the unique entropy solution of $\eqref{eq:main}$ in the sense of Definition~\ref{def:entropy_sol}.
    \end{enumerate} 
    Furthermore, the solution satisfies the bound
    \[
        \rho(t,x) \le \max\bigl\{\|\bar\rho\|_{L^\infty(\R)},M_\beta\bigr\}\qquad\text{for almost every $(t,x)\in[0,T]\times\R$}\,.
    \]
\end{mainthm}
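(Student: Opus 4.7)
The plan is to assemble the theorem from the three pillars developed in the body of the paper: well-posedness and a priori estimates for the particle system from Section~\ref{sec:DPA}, strong $L^1$-compactness of the reconstructions $(\hat\rho^h,\hat\jmath^h)$ from Section~\ref{sec:Continuous}, and two parallel passages to the limit in Section~\ref{sec:convergence}---one yielding the entropy inequality, the other the Energy-Dissipation balance.

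\textbf{Compactness.} First I would invoke the well-posedness of \eqref{eq:particle_complete} together with three uniform-in-$h$ estimates: (a) the $L^\infty$ bound $\rho_i^h(t)\le \max\{\|\bar\rho\|_{L^\infty(\R)},M_\beta\}$, which follows from the structure of \eqref{eq:particle_complete} since $\beta(\rho_i^h)=0$ whenever $\rho_i^h\ge M_\beta$, forcing the gap $\x_{i+1}-\x_i$ to be non-decreasing on that set; (b) a uniform BV bound on $\hat\rho^h_t$, coming from a discrete one-sided Lipschitz-type estimate on the gaps; (c) a uniform bound on $\text{supp}(\hat\rho^h_t)$, using the linear growth of $V'$ and $W'$ in (A-V)/(A-Wr)/(A-Wn) to control $\max_i|\dot{\x}_i|$ by $C(1+\max_j|\x_j|)$ and applying Grönwall. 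By Lemma~\ref{lem:reconstruction-continuity}, $(\hat\rho^h,\hat\jmath^h)\in\mathcal{CE}(0,T)$. The BV-in-space bound together with $|\hat\jmath^h_t|(\R)\le C$ then yields, through the continuity equation, equicontinuity in time of $t\mapsto\hat\rho^h_t$ in a negative Sobolev norm. A Helly/Aubin--Lions argument extracts a subsequence with $\hat\rho^h\to\rho$ strongly in $L^1([0,T]\times\R)$ and $\hat\jmath^h\rightharpoonup^* j$, so $(\rho,j)\in\mathcal{CE}(0,T)$; the pointwise $L^\infty$ bound on $\rho$ is then immediate.

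\textbf{Entropy solution.} The upwind structure of \eqref{eq:particle_complete} is designed so that, after testing with $\sic\, h$ and summing, it produces a discrete Kru\v{z}kov-type entropy inequality. Multiplying by a non-negative $\varphi\in C_c^\infty([0,T)\times\R)$, integrating in time and rewriting in terms of the reconstructions, I would pass to the limit $h\to 0$ term by term. Strong $L^1$-convergence of $\hat\rho^h$, Lipschitz continuity of $\vartheta$, and smoothness of $V'$ and $W'$ away from zero---together with Remark~\ref{rem:Newtonian-Lipschitz} and the uniform $L^\infty$-bound in the Newtonian case, which give $\sfF_{\hat\rho^h}$ and $\partial_x\sfF_{\hat\rho^h}$ uniformly bounded and locally convergent on the supports---identify the limit as the entropy inequality \eqref{eq:EI} of Definition~\ref{def:entropy_sol}.

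\textbf{Gradient-flow structure.} From Section~\ref{sec:discrete-GF}, the particle solution satisfies a discrete Energy-Dissipation balance for $(\calF_h,\calR_h,\calR_h^*)$. I would first show that $\calF_h(\x_t)\to\calF(\rho_t)$ pointwise in $t$, using strong $L^1$-convergence, the uniform $L^\infty$ bound, and the assumptions on $V,W$. Next come the liminf inequalities
\[
    \int_s^t \calR(\rho_r,j_r)\,dr \le \liminf_{h\to 0}\int_s^t \calR_h(\x_r,\j_r)\,dr, \qquad \int_s^t \calR^*(\rho_r,-\sfF_\rho)\,dr \le \liminf_{h\to 0}\int_s^t \calR_h^*(\x_r,-\calF_h'(\x_r))\,dr,
\]
obtained from the joint convexity of $(\rho,j)\mapsto|j|^2/\vartheta(\rho)$ combined with strong convergence of $\hat\rho^h$ and weak-$*$ convergence of $\hat\jmath^h$. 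These give the $\le$ direction of \eqref{eq:intro-EDB} at the limit, while the matching $\ge$ direction is the chain-rule upper bound $\calF(\rho_s)-\calF(\rho_t)\le\int_s^t\calR+\calR^*\,dr$, a consequence of Young's inequality applied to the continuous weak formulation already identified. Equality in \eqref{eq:intro-EDB} then forces the chain rule \eqref{eq:intro-chainrule}. Finally, uniqueness of entropy solutions (Kru\v{z}kov) implies that $\rho$ does not depend on the subsequence, so the full sequence $\hat\rho^h$ converges.

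\textbf{Main obstacle.} The hardest step is the liminf inequality for $\calR$ and $\calR^*$: the integrand $|j|^2/\vartheta(\rho)$ is degenerate precisely on $\{\vartheta(\rho)=0\}$---the saturation set $\{\rho=M_\beta\}$---and $\vartheta$ is only piecewise smooth there. The argument must combine joint convexity of $(\rho,j)\mapsto|j|^2/\vartheta(\rho)$ on $\{\vartheta(\rho)>0\}$ with the discrete structural fact that $\hat\jmath^h$ vanishes wherever $\hat\rho^h$ reaches $M_\beta$ (since $\beta(\rho_i^h)=0$ there), ensuring that the limit flux $j$ is concentrated on $\{\vartheta(\rho)>0\}$ and that the degeneracy does not spoil lower semicontinuity. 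A secondary difficulty is the Newtonian case in Step 2, where \eqref{eq:newtonian-force} must be leveraged carefully so that $\partial_x\sfF_{\hat\rho^h}$ converges in a sense strong enough to survive the nonlinear product $\vartheta(c)\,\partial_x\sfF_{\hat\rho^h}\,\varphi$ in the entropy inequality.
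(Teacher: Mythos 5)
Your compactness step and your entropy step follow, in substance, the paper's own route (maximum principle from the monotonicity of $\beta$, uniform $BV$ and support bounds feeding a generalized Aubin--Lions argument; a discrete Kru\v{z}kov-type inequality extracted from the upwind structure, then a.e.\ convergence of $\sfF_{\hat\rho^h}$ and $\partial_x\sfF_{\hat\rho^h}$), so I have no objection there. The genuine gap is in the dissipation liminf, exactly the point you flag as the main obstacle: your resolution rests on the ``discrete structural fact that $\hat\jmath^h$ vanishes wherever $\hat\rho^h$ reaches $M_\beta$'', and this is false. On a saturated cell $K_i$ (where $\rho_i^h\ge M_\beta$, hence $\vartheta(\rho_i^h)=0$) the reconstructed flux is $\rho_i^h\,u_i^h\,\bbmI_{K_i}$, and $u_i^h$ interpolates $\dot\x_i$ and $\dot\x_{i+1}$, which are in general nonzero: by \eqref{eq:particle_complete}, $\dot\x_i=-\beta(\rho_{i-1}^h)\f_i^+$ and $\dot\x_{i+1}=-\beta(\rho_{i+1}^h)\f_{i+1}^-$ are driven by the \emph{neighbouring} cells, so the endpoints of a saturated cell still move. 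Consequently $\calR(\hat\rho^h,\hat\jmath^h)$ may even be $+\infty$, there is no inequality of the form $\int\calR(\hat\rho^h,\hat\jmath^h)\le h\int\calR_h+O(h)$ to mediate between the continuum functional and the discrete one (note also the missing factor $h$ in the inequality as you wrote it), and joint convexity/lower semicontinuity of $(\rho,j)\mapsto|j|^2/\vartheta(\rho)$ cannot be applied directly against $\liminf_h\int\calR_h$. In particular, the assertion that the limit flux is a priori concentrated on $\{\vartheta(\rho)>0\}$ is a conclusion of the theorem, not an input you may assume.

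The paper circumvents this by never leaving the dual formulation until the limit: for $\varphi\in C_c^{0,1}$ the quantity $\int_s^t\langle\varphi,\hat\jmath^h_r\rangle-\calR^*(\hat\rho^h_r,\varphi)\,dr$ is bounded by $h\int_s^t\calR_h(\x^h,\j^h)\,dr+O(h)$ via Taylor expansion and the Fenchel inequality (Lemma~\ref{lem:action}), and it passes to the limit because it is linear in the flux and continuous in $\rho$ under the available convergences; combining with \eqref{eq:discrete-EDB}, Lemma~\ref{lem:energy-convergence} and Lemma~\ref{lem:fisher-information} (used with $\calF$, $\sfF_\rho$ in place of $\widehat\calF_h$, $\sfF^h_{\hat\rho^h}$, cf.\ Remark~\ref{rem:fisher-information}) gives \eqref{eq:pre-EDI}, and only then is the supremum over $\varphi$ taken. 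Finiteness of that supremum yields $J\ll\varTheta:=\int_\cdot\vartheta(\rho_t)\,dt\,dx$ --- so the degeneracy on $\{\vartheta(\rho)=0\}$ is handled automatically --- and a density argument gives $\tfrac12\|dJ/d\varTheta\|^2_{L^2(\varTheta)}\le\mathscr{I}(s,t)$, i.e.\ the term $\int\calR(\rho,j)$ is recovered in the limit. If you want to keep an lsc-based route, you must replace the false structural claim by an argument of this duality type. A secondary imprecision: your reverse inequality (``chain rule \ldots\ a consequence of Young's inequality applied to the continuous weak formulation already identified'') is circular as phrased, since the force--flux relation $j=-\vartheta(\rho)\sfF_\rho$ is only available \emph{after} the Energy-Dissipation balance and chain rule are established; what is available is the continuity equation plus the finiteness of the dissipation integrals, and differentiating $\calF$ along the curve then requires the regularization of $W$ and the $W_1$-absolute continuity of $t\mapsto\rho_t$ as in Lemma~\ref{lem:chain-rule}, which is needed even in the Newtonian case.
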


Theorem~\ref{mainthm:regular-newtonian} adds to the current understanding of \eqref{eq:main} in various ways. Firstly, Theorem~\ref{mainthm:regular-newtonian}(i) provides a rigorous variational description of solutions to \eqref{eq:main} in terms of $(\calF,\calR,\calR^*)$-gradient flows. Putting aside the novelty of Theorem~\ref{mainthm:regular-newtonian}(i), the existence of entropy solutions to \eqref{eq:main} for Newtonian interaction potentials $W$ was not known before Theorem~\ref{mainthm:regular-newtonian}(ii), let alone the convergence of a numerical approximation towards entropy solutions thereof.

\medskip

When $W$ is only assumed to satisfy (A-Wm), the corresponding force $\sfF_\rho$ may not be regular enough, thus prohibiting us from obtaining the entropy inequality \eqref{eq:EI}---the inequality requires $\partial_x\sfF_\rho$ to be well-defined. Nevertheless, we still obtain a gradient flow solution of \eqref{eq:main}, albeit under a uniform lower bound assumption on the initial datum $\bar\rho$.

\begin{mainthm}\label{mainthm:mild}
    Let $\bar\rho$, $\beta$, $V$ and $W$ satisfy (In1), (A-$\beta$), (A-V) and (A-Wm) respectively. If $\bar\rho$ satisfies additionally (In2), then for any $T>0$, the sequence of piecewise constant interpolation $\{\hat{\rho}^h\}_{h\in(0,1)}$ defined in \eqref{eq:piece_den_intro} converges in $L^1\left([0,T]\times \R\right)$ to a function $\rho$ as $h\to 0$, satisfying the bound
    \[
        \rho(t,x) \le \max\bigl\{\|\bar\rho\|_{L^\infty(\R)},M_\beta\bigr\}\qquad\text{for almost every $(t,x)\in[0,T]\times\R$}\,.
    \]
    Furthermore, there exists a measurable family $j=\{j_t\}_{t\in[0,T]}$ such that the pair $(\rho,j)\in\mathcal{CE}(0,T)$ is an $(\calF,\calR,\calR^*)$-gradient flow solution of \eqref{eq:main} with initial datum $\bar\rho$.
\end{mainthm}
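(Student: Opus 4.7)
The plan is to proceed along the same variational route used for Theorem~\ref{mainthm:regular-newtonian}, employing the DPA \eqref{eq:particle_complete} as the approximation scheme, but being careful about the reduced regularity of $W$ imposed by (A-Wm) and clarifying the role of (In2). First, following Section~\ref{sec:DPA}, I would construct the initial configuration $\bar\x$ from $\bar\rho$ via the pseudo-inverse distribution function so that the initial discrete densities $\rho_i^h(0)$ lie in $[\sigma,\|\bar\rho\|_{L^\infty}]$; the lower bound from (In2) is precisely what prevents $\rho_i^h(0)$ from collapsing to zero, giving a uniform control on initial gaps. The ODE well-posedness of \eqref{eq:particle_complete} and the discrete maximum principle $\rho_i^h(t)\le M := \max\{\|\bar\rho\|_{L^\infty},M_\beta\}$ then go through as in the regular/Newtonian case, the point being that $W'$ under (A-Wm) is Lipschitz on $\R\setminus\{0\}$ and the upper bound on $\rho_i^h$ keeps particles uniformly separated (hence away from the singular set of $W'$), so each $\f_i$ is well defined and locally Lipschitz in $\x$.

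Next, I would obtain a priori estimates on the reconstructions $(\hat\rho^h,\hat\jmath^h)$ defined in \eqref{eq:piece_den_intro}: mass conservation, a uniform $L^\infty$-bound by $M$, uniform BV bounds in space (coming from an entropy-type estimate at the discrete level), uniform support estimates, and an integrated bound on the discrete dissipation inherited from the discrete EDB. Invoking the compactness results of Section~\ref{sec:Continuous}, a (non-relabelled) subsequence of $\hat\rho^h$ converges strongly in $L^1([0,T]\times\R)$ to some $\rho$ satisfying the asserted $L^\infty$-bound, while $\hat\jmath^h$ converges weakly-$*$ to a measure-valued $j$. Lemma~\ref{lem:reconstruction-continuity} then ensures $(\rho,j)\in\mathcal{CE}(0,T)$ with $\rho_t\rightharpoonup^* \bar\rho$ as $t\to 0$.

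The heart of the proof is the passage to the limit in the discrete EDB. Joint lower semicontinuity of $(\rho,j)\mapsto \calR(\rho,j)$ along the identified modes of convergence gives a $\liminf$ for the $\calR$-term. For the $\calR^*$-term, the boundedness of $W'$ together with the strong $L^1$-convergence of $\hat\rho^h$ yield $\sfF_{\hat\rho^h}(t,\cdot)\to\sfF_\rho(t,\cdot)$ pointwise almost everywhere, and the $L^\infty$-bound plus dominated convergence allow us to pass to the limit inside the convex integral functional $\calR^*(\rho,\cdot)$. Strong $L^1$-convergence together with the continuity/linear growth of $V$ and $W$ gives $\calF(\hat\rho^h_t)\to\calF(\rho_t)$. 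This yields the EDB as a one-sided inequality. Finally, the chain rule \eqref{eq:intro-chainrule} for the limit pair—established for $BV\cap L^\infty$ curves against the (merely $BV$ in $x$) force $\sfF_\rho$ by a mollification argument on $W$ and passage to the limit using the uniform energy and dissipation bounds—turns the inequality into the EDB equality, identifying $(\rho,j)$ as the claimed $(\calF,\calR,\calR^*)$-gradient flow solution.

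The main obstacle is this last step: because (A-Wm) allows $W'$ to jump at the origin, $\sfF_\rho$ is not Lipschitz in $x$ (contrast Remark~\ref{rem:Newtonian-Lipschitz}), so the chain rule cannot be read off directly from the regularity of $\rho$. The mollification of $W$ by a sequence $W^\varepsilon$ with $W^\varepsilon \to W$ locally uniformly and $W^{\varepsilon\prime}\to W'$ in $L^p_{\mathrm{loc}}$ produces smooth surrogate forces for which the chain rule holds, and the $\varepsilon\to 0$ limit uses the $L^\infty$-bound on $\rho$ and dominated convergence. This is also precisely why the entropy inequality \eqref{eq:EI}—which genuinely requires a distributional $\partial_x\sfF_\rho$—is not asserted in the conclusion. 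The assumption (In2) is what makes this program robust at the discrete level: it rules out a degenerate initial configuration and ensures the discrete EDB is non-trivial and uniformly bounded, propagating to the limit.
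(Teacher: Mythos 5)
Your skeleton coincides with the paper's: well-posedness and a priori bounds for \eqref{eq:particle_complete}, compactness from Section~\ref{sec:Continuous}, passage to the limit in the discrete Energy-Dissipation balance \eqref{eq:discrete-EDB} to get an Energy-Dissipation inequality, and a chain rule obtained by mollifying $W$ (this is exactly Lemma~\ref{lem:chain-rule}) to upgrade it to \eqref{eq:EDB}. The genuine gap is in your treatment of the $\calR$-term. The discrete EDB controls $h\int_s^t\calR_h(\x^h,\j^h)\,dr$, not $\int_s^t\calR(\hat\rho^h_r,\hat\jmath^h_r)\,dr$, and the latter is in general \emph{not} dominated by the former: on a (nearly) saturated cell, $\rho_i^h$ close to $M_\beta$, one has $\vartheta(\hat\rho^h)\approx 0$ on $K_i$ while the reconstructed flux $\hat\jmath^h=\rho_i^h u_i^h$ need not vanish there, because the endpoint velocities $\dot\x_i,\dot\x_{i+1}$ are driven by the mobilities of the \emph{neighbouring} cells (this is the upwind structure of \eqref{eq:particle_complete}); hence $\calR(\hat\rho^h_r,\hat\jmath^h_r)$ may be huge or even $+\infty$, and "joint lower semicontinuity of $\calR$ along the identified convergences" has nothing finite to act on. The paper circumvents precisely this by never evaluating $\calR$ along the approximations: Lemma~\ref{lem:action} keeps the action in dual form, $\int\langle\varphi,\hat\jmath^h\rangle-\calR^*(\hat\rho^h,\varphi)\le h\int\calR_h+O(h)$ for each fixed $\varphi\in C_c^{0,1}$, passes to the limit for fixed $\varphi$, and only afterwards (in Section~\ref{sec:proof-B}) takes the supremum over $\varphi$, deduces $J\ll\varTheta$ and identifies the supremum with $\tfrac12\|dJ/d\varTheta\|_{L^2(\varTheta)}^2=\int\calR(\rho_r,j_r)\,dr$ for the \emph{limit} pair only. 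Without this duality/Radon--Nikodym step (or a substitute argument showing $\hat\jmath^h\ll\vartheta(\hat\rho^h)$ with uniformly controlled density, which the scheme does not provide), your plan does not close.

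A secondary issue is your account of (In2) and of the mild case itself. You work throughout with the full force $\sfF_{\hat\rho^h}$ and the energy $\calF$, whereas for (A-Wm) the paper replaces them by the auxiliary energy \eqref{eq:aux-energy} and force $\sfF^h_{\hat\rho^h}$, which exclude the self-cell so that the jump of $W'$ at the origin never enters the comparison with $\f_i$; and (In2) is used in exactly one place, namely Lemma~\ref{lem:fisher-information}: via Lemma~\ref{lem:upper-bound} it yields $|K_i|(t)\le e^{\mu T}h/\sigma\to0$, so that the excluded cell shrinks and $\sfF^h_{\hat\rho^h}\to\sfF_\rho$. It is \emph{not} needed for uniform control of the discrete EDB or for non-degeneracy of the scheme, which follow from Lemmas~\ref{lem:lower_DPA} and \ref{lem:extend-DPA} without any lower bound on $\bar\rho$. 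As written, your argument never actually invokes (In2) in a mathematically essential step, so you would either have to justify that the auxiliary construction is dispensable (i.e.\ prove the analogue of \eqref{eq:fisher-information-bound} and the force convergence with $\sfF_{\hat\rho^h}$ for mild $W$), or adopt the paper's auxiliary objects, in which case (In2) enters where the paper uses it, not where you place it.
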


While Theorem B provides a positive result, it is unsatisfactory on two (possibly more) accounts: we are uncertain if $(\calF,\calR,\calR^*)$-gradient flow solutions are unique, and if the uniform lower bound assumption on the initial datum is indispensable. On the other hand, it was shown in \cite{diFFRa2019} that when the support of the initial datum $\bar\rho$ is disconnected---hence violating (In2)---one can construct multiple weak solutions to \eqref{eq:main}. For this reason, we conjecture that the current definition of a gradient flow solution to \eqref{eq:main} allows for non-uniqueness, but we digress for now, and choose to postpone further discussion on the matter to future investigation.

\medskip

The proof of Theorems~\ref{mainthm:regular-newtonian} and \ref{mainthm:mild} are summarized in Section~\ref{sec:proof-A} and Section~\ref{sec:proof-B} respectively.

\subsection*{Acknowledgments}
S.F.\ thanks the Centre for Analysis, Scientific Computing and Applications at Eindhoven University of Technology for the hospitality received during the early stages of this work. O.T.\ acknowledges support from NWO Vidi grant 016.Vidi.189.102, ``Dynamical-Variational Transport Costs and Application to Variational Evolutions".

\section{Deterministic particle approximation (DPA)}\label{sec:DPA}
% \subsection{The particle scheme}
Let $\bar\rho\in L^1(\R)$ satisfy (In1), and set $\bar{x}_{min}:=\min\{x\in \text{supp}(\bar{\rho})\}$ and $ \bar{x}_{max}:=\max\{x\in \text{supp}(\bar{\rho})\}$. For a fixed integer $N\in\N$, we split $\text{supp}(\bar{\rho})$ into $N$ sub-intervals such that
the mass of $\bar{\rho}$ restricted over each interval equals $h=m/N$. In order to do this, we fix $\bar{\x}_0= \bar{x}_{min}$ and $\bar{\x}_N = \bar{x}_{max}$, and define recursively the points 
\begin{equation}\label{eq:dscr_IC_un}
\bar{\x}_i = \sup \left\lbrace x \in \R : \int_{\bar{\x}_{i-1}}^x \bar{\rho}(x)\, dx < h  \right\rbrace\qquad\text{for $i \in \{ 1,\, \ldots,\, N-1\}$\,.}
\end{equation}
It is clear from the construction that $\int_{\bar{\x}_{i-1}}^{\bar{\x}_i} \bar{\rho}(x)\, dx = h$ and $\bar{\x}_0 < \bar{\x}_1 < \ldots\, < \bar{\x}_{N-1} < \bar{\x}_N$.
\begin{figure}[htbp]
\begin{center}
\begin{tikzpicture}
\draw[->] (-2.5,0) -- (5.5,0);
\draw[->] (1.5,-0.5) -- (1.5,2) ;

\draw[scale=1,domain=-2:5,smooth,variable=\x,black] plot ({\x},{1.4*exp(-0.4*(\x-1.5)*(\x-1.5)});
%\draw[scale=1,domain=1.5:4,smooth,variable=\x,black] plot ({\x},{exp(-(\x-2)*(\x-2)});

\node[black,scale=1] at (2,1.6){$\bar{\rho}$};

\node[scale=1] at (-2,-0.3) {\scriptsize $\bar{\x}_0$};
\node[scale=1] at (-0.15,-0.3) {\scriptsize $\bar{\x}_1$};
\node[scale=1] at (0.45,-0.3) {\scriptsize $\bar{\x}_2$};
\node[scale=1] at (0.85,-0.3) {\scriptsize $\bar{\x}_3$};
\node[scale=1] at (1.2,-0.3) {\scriptsize $\bar{\x}_4$};
\node[scale=1] at (1.5,-0.3) {\scriptsize $\bar{\x}_5$};
\node[scale=1] at (1.8,-0.3) {\scriptsize $\bar{\x}_6$};
\node[scale=1] at (2.15,-0.3) {\scriptsize $\bar{\x}_7$};
\node[scale=1] at (2.55,-0.3) {\scriptsize $\bar{\x}_8$};
\node[scale=1] at (3.15,-0.3) {\scriptsize $\bar{\x}_9$};
\node[scale=1] at (5,-0.3) {\scriptsize $\bar{\x}_{10}$};

\draw[dashed] (-0.15,0) -- (-0.15,0.5);
\draw[dashed] (0.45,0) -- (0.45,0.9);
\draw[dashed] (0.85,0) -- (0.85,1.2);
\draw[dashed] (1.2,0) -- (1.2,1.35);
\draw[dashed] (1.5,0) -- (1.5,1.4);
\draw[dashed] (1.8,0) -- (1.8,1.35);
\draw[dashed] (2.15,0) -- (2.15,1.2);
\draw[dashed] (2.55,0) -- (2.55,0.9);
\draw[dashed] (3.15,0) -- (3.15,0.5);

\draw[->] (-1,1) -- (-0.45,0.17);
\draw[->] (-1,1) -- (0.25,0.2);
\node[black,scale=1] at (-1.3,1.3){$h=\frac{m}{N}$};
\end{tikzpicture}
\caption{An example of \eqref{eq:dscr_IC_un} with $N=11$ particles. Each area between consecutive particles measures $h=m/N$.}
\label{fig:initial}
\end{center}
\end{figure}
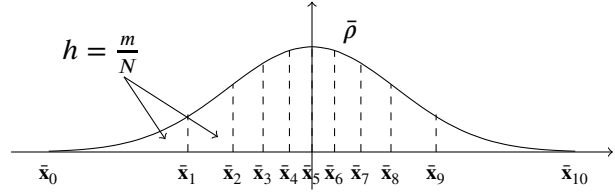

Taking the vector $\bar{\x} = (\bar{\x}_0,\ldots,\bar{\x}_N)\in\R^N$ as initial condition, we let the particles evolve according to the system of equations \eqref{eq:particle_complete}, which we recall here:
\begin{equation}\tag{{DPA}}
\left\{\qquad\begin{aligned}
    \dot{\x}_0 &= -\beta(\rho_0^h)\f_0^- - \beta_{max}\f_0^+\\
    \dot\x_i &= -\beta(\rho_i^h)\f_i^--\beta(\rho_{i-1}^h)\f_i^+ \\
    \dot{\x}_N &= -\beta_{max}\f_N^- -\beta(\rho_{N-1}^h)\f_N^+
\end{aligned}\qquad\text{for $i=1,\ldots,N-1$}\,,\right.
\end{equation}
where we have defined
\begin{equation}\label{eq:disc_density}
    \rho_i^h (t) = \frac{h}{\x_{i+1}(t)-\x_i(t)},\qquad i=0,\ldots,N-1,
\end{equation}
and
\begin{equation}\label{eq:point_functional}
	\f_i = V'(\x_i) + \sum\nolimits_{j\ne i} h W'(\x_i-\x_j)\,,\qquad i=0,\ldots,N\,,
\end{equation}
where $\f_i^+$ and $\f_i^-$ denote the positive and negative part of $\f_i$, i.e. $\f_i=\f_i^++\f_i^-$. 

In the following, we set
\[
    M:= \max\bigl\{\|\bar\rho\|_{L^\infty},M_\beta\bigr\}.
\]
The main results concerning \eqref{eq:particle_complete} are collected in the following two theorems.

\begin{thm}\label{thm:existence-DPA}
    Let $T>0$ be fixed, and let $\bar\rho$, $\beta$, $V$ and $W$ satisfy assumptions (In1), (A-$\beta$), (A-V) and (A-Wm) respectively. Then for any $h\in(0,1)$, there exists a solution $\x\in\calC([0,T];\R^{N+1})$ to the deterministic particle approximation \eqref{eq:particle_complete} having the following properties:
    \begin{enumerate}[label=(\roman*)]
        \item For every $i=0,\ldots,N-1$ and $t\in[0,T]$,
        \begin{equation}\label{eq:lower_DPA}
        |K_i|(t) = \x_{i+1}(t) - \x_i(t) \ge \frac{h}{M}\,;
    \end{equation}
        \item $\sup\nolimits_{t\in[0,T]}|\x_N(t)-\x_0(t)| <\infty$.
    \end{enumerate}
\end{thm}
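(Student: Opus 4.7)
The plan is to first establish local well-posedness via Picard--Lindelöf, then derive the pointwise a priori estimates (i) and (ii), and finally use them to extend the local solution to all of $[0,T]$. For local existence, view the right-hand side of \eqref{eq:particle_complete} as a vector field on the open set $\Omega := \{\x \in \R^{N+1} : \x_0 < \x_1 < \cdots < \x_N\}$. Since $\beta$ is Lipschitz by (A-$\beta$), the positive/negative part maps are $1$-Lipschitz, $V'\in C^1$ by (A-V), and under (A-Wm) $W'$ is Lipschitz on every compact subset of $\R\setminus\{0\}$, the field is locally Lipschitz on $\Omega$ (the singularity of $W'$ at the origin is never encountered because $\x_i\ne\x_j$ for $i\ne j$ in $\Omega$). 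Hence there exists a unique $C^1$ solution on some maximal interval $[0,T_{\max})$.

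For (i), I would work directly with the discrete density. Differentiating \eqref{eq:disc_density} gives
\[
    \dot\rho_i^h = -\frac{(\rho_i^h)^2}{h}\bigl(\dot\x_{i+1} - \dot\x_i\bigr),
\]
and substituting \eqref{eq:particle_complete} shows that whenever $\rho_i^h \ge M_\beta$, the factor $\beta(\rho_i^h)$ vanishes, killing the two terms of $\dot\x_{i+1}-\dot\x_i$ that carry it. Only $-\beta(\rho_{i+1}^h)\f_{i+1}^- + \beta(\rho_{i-1}^h)\f_i^+$ survives (with $\beta_{max}$ replacing $\beta(\rho_{-1}^h)$ or $\beta(\rho_N^h)$ at the boundaries), and both terms are nonnegative because $\beta\ge 0$, $\f^+\ge 0$, and $\f^-\le 0$. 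Thus $\dot\rho_i^h \le 0$ at every time where $\rho_i^h\ge M_\beta$. Since $\rho_i^h(0)\le\|\bar\rho\|_{L^\infty}\le M$ by the construction in \eqref{eq:dscr_IC_un}, and $M\ge M_\beta$, a comparison argument finishes the job: if $\rho_i^h(t_1)>M$ for some $t_1$, setting $t^* := \sup\{t\le t_1 : \rho_i^h(t)\le M\}$ gives $\rho_i^h(t^*)=M$ by continuity and $\rho_i^h>M_\beta$ throughout $(t^*,t_1]$, whence $\rho_i^h(t_1)\le\rho_i^h(t^*)+\int_{t^*}^{t_1}\dot\rho_i^h\,ds\le M$, a contradiction. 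Consequently $\rho_i^h\le M$ on $[0,T_{\max})$, which is exactly \eqref{eq:lower_DPA}.

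For (ii), put $R(t):=\max_i|\x_i(t)|$. The linear growth assumptions in (A-V) and (A-Wm), together with $\beta\le\beta_{max}$, give $|\dot\x_i(t)|\le\beta_{max}\bigl(c_V(1+R)+c_W m(1+2R)\bigr)\le C(1+R(t))$ for a constant $C$ depending only on $m,\beta_{max},c_V,c_W$. Grönwall's lemma then yields $R(t)\le (1+R(0))e^{Ct}-1$, so $|\x_N-\x_0|\le 2R$ is uniformly bounded on $[0,T_{\max})\cap[0,T]$. Combining with (i), the solution remains in a compact subset of $\Omega$, and the continuation principle extends it to $[0,T]$. The main obstacle lies in step (i): the inequality $\dot\rho_i^h\le 0$ at the critical level is only nonstrict, so a textbook maximum principle does not suffice; one truly needs the comparison argument above, reasoning on the last sub-$M$ interval, to turn the sign information into a global bound.
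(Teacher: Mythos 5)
Your proposal is correct, and it reaches the same three-step skeleton as the paper (local existence, the lower bound \eqref{eq:lower_DPA}, confinement plus continuation), but each step is executed differently. For local existence, the paper regularizes the system (replacing $\x_{i+1}-\x_i$ and $\x_i-\x_j$ by $\max\{\varepsilon,\cdot\}$) so that the right-hand side is globally continuous, applies Peano, and then uses the lower bound of Lemma~\ref{lem:lower_DPA} to conclude that for $\varepsilon<h/M$ the regularized solution solves \eqref{eq:particle_complete} (Lemma~\ref{lem:local-DPA}); you instead work directly on the open ordered cone $\{\x_0<\cdots<\x_N\}$, where local Lipschitzness of the field under (A-Wm) is indeed available away from collisions, and invoke Picard--Lindel\"of, which additionally gives uniqueness of the discrete flow (not claimed by the paper). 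For (i), the paper argues at the first time $\tau_1$ where some gap touches $h/M$, reads off the signs of $\dot\x_i(\tau_1)$ and $\dot\x_{i+1}(\tau_1)$, and uses a Taylor expansion to contradict the existence of a later time with a smaller gap; your argument differentiates $\rho_i^h$ and obtains the differential inequality $\dot\rho_i^h\le 0$ on the \emph{whole} region $\{\rho_i^h\ge M_\beta\}$, which you then convert into the bound $\rho_i^h\le M$ by the ``last sub-$M$ time'' comparison. This is a cleaner route: it uses the sign information on an interval rather than only at the touching time, so it sidesteps the slightly delicate point in the paper's expansion (where $\dot\x$ is evaluated at intermediate times although the sign is only known at $\tau_1$), at the modest price of having to handle the boundary indices $i=0,N-1$ separately, which you do. For (ii) and the extension, the paper runs Gr\"onwall on the functional $|\x_0|^2+|\x_N-\x_0|^2+|\x_N|^2$ (Lemma~\ref{lem:extend-DPA}, also invoking Lemma~\ref{lem:liploc_DPA}), whereas you run Gr\"onwall on $R(t)=\max_i|\x_i(t)|$ using only the linear-growth assumptions and $\beta\le\beta_{max}$; both yield the uniform bound on $|\x_N-\x_0|$ and confine the trajectory to a compact subset of the domain, so the continuation principle applies. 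In short: your proof is complete and, in steps (i) and (iii), somewhat more economical than the paper's; the paper's regularization in step one is an alternative device that avoids any discussion of the domain boundary but requires the extra observation that the cutoff is never active.
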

\begin{proof}
    The proof of the theorem follows by combining the results obtained in Lemma~\ref{lem:lower_DPA}, Lemma~\ref{lem:local-DPA} and Lemma~\ref{lem:extend-DPA} below.
\end{proof}

\begin{thm}\label{thm:upper-bound}
    Let the assumptions of Theorem~\ref{thm:existence-DPA} hold. If $\bar\rho$ satisfies additionally (In2), then the solution provided by Theorem~\ref{thm:existence-DPA} also satisfies
    \begin{equation}\label{eq:max_part}
        |K_i|(t) = \x_{i+1}(t) - \x_i(t) \leq e^{\mu T}\frac{h}{\sigma},\qquad \text{$i=0,\ldots,N-1$, $t \in [0,T]$}\,,
\end{equation} 
for some constant $\mu>0$, independent of $h\in(0,1)$.
\end{thm}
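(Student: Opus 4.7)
The plan is to prove the bound by a Gronwall argument on the envelope $R(t) := \max_{0 \le i \le N-1}|K_i|(t)$. The initial estimate $R(0) \le h/\sigma$ is immediate: the partition defining $\bar\x$ gives $h = \int_{\bar\x_{i-1}}^{\bar\x_i}\bar\rho(x)\,dx$, and (In2) forces $h \ge \sigma(\bar\x_i - \bar\x_{i-1})$. It thus suffices to produce a constant $\mu > 0$, independent of $h \in (0,1)$, such that $\dot R(t) \le \mu R(t)$ almost everywhere on $[0,T]$.

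Since $R$ is a pointwise maximum of finitely many absolutely continuous functions, it is itself absolutely continuous, and by a standard envelope-type argument $\dot R(t) = \dot R_{i^*(t)}(t)$ a.e., for a measurable selection $i^*(t)$ of an index at which the maximum is attained. For an interior active index $i \in \{1,\ldots,N-2\}$, I would expand
\[
\dot R_i = \beta(\rho_{i-1}^h)\f_i^+ - \beta(\rho_i^h)\f_{i+1}^+ + \beta(\rho_i^h)\f_i^- - \beta(\rho_{i+1}^h)\f_{i+1}^-,
\]
and use that maximality of $R_i$ means minimality of $\rho_i^h$ among its immediate neighbors, so that the monotonicity of $\beta$ in (A-$\beta$) yields $\beta(\rho_{i\pm 1}^h) \le \beta(\rho_i^h)$. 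Combined with the signs of $\f_i^+ \ge 0$ and $\f_i^- \le 0$, this collapses to $\dot R_i \le \beta(\rho_i^h)(\f_i - \f_{i+1})$.

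The remaining technical step is a discrete Lipschitz estimate $|\f_i - \f_{i+1}| \le CR_i$ with $C$ independent of $h$. The contribution from $V'$ is bounded by $\|V''\|_\infty$ on the particle support, which is uniformly finite by Theorem~\ref{thm:existence-DPA}(ii) and (A-V). The $W'$-contribution is handled under (A-Wm) (and a fortiori under (A-Wr) or (A-Wn)) using the boundedness of $W''$ on $\R\setminus\{0\}$; the separation $|K_j|(t) \ge h/M$ from Theorem~\ref{thm:existence-DPA}(i) keeps the arguments of $W'$ appearing in the finite differences uniformly away from the singular point $0$, while the self-interaction terms $hW'(\x_i - \x_{i+1})$ contribute only $O(h)$, absorbed into $CR_i$ via $R_i \ge h/M$. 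This yields $\dot R_i \le \mu_0 R_i$ at any interior maximum, with $\mu_0 = \beta_{max}C$.

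The hard part is the treatment of the boundary indices $i \in \{0, N-1\}$, where the upwind scheme replaces the missing outer neighbor density by $0$, so that $\beta_{max} = \beta(0)$ appears in place of $\beta(\rho_{-1}^h)$ (and symmetrically at $i=N-1$). The analogous expansion for $\dot R_0$ therefore carries an extra term of the form $(\beta_{max} - \beta(\rho_0^h))\f_0^+$ which is not manifestly controlled by $R_0$ alone. I would close this by a bootstrap/continuity argument: set $T^* := \sup\{t \in [0,T] : R(s) \le e^{\mu s}h/\sigma \text{ for all } s \in [0,t]\}$; on $[0,T^*)$, combine the running bound with the Lipschitz property of $\beta$ from (A-$\beta$) to recast $(\beta_{max} - \beta(\rho_0^h))\f_0^+$ in a form absorbable into $\mu R_0$, enlarging $\mu$ as necessary in terms of $\sigma$, $\|V''\|_\infty$, $\|W''\|_{L^\infty(\R\setminus\{0\})}$, $m$, $\beta_{max}$ and $L_\beta$ but not of $h$. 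Once $\dot R(t) \le \mu R(t)$ holds on all of $[0,T]$, an application of Gronwall delivers $R(t) \le e^{\mu T}h/\sigma$, which is the claim.
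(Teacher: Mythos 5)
Your interior argument is exactly the one the paper uses: the paper runs it as a first‑crossing/contradiction argument against the barrier $c_K e^{\mu t}$ rather than differentiating $R(t)=\max_i|K_i|(t)$ a.e., and it quotes Lemma~\ref{lem:liploc_DPA} for the discrete Lipschitz bound $|\f_{i+1}-\f_i|\le c_{\f}|K_i|$, but the content (maximality of $|K_i|$ $\Rightarrow$ $\beta(\rho_{i\pm1}^h)\le\beta(\rho_i^h)$, drop the sign‑definite terms, Gronwall with $\mu>c_{\f}\beta_{max}$, and $R(0)\le h/\sigma$ from (In2)) is identical. The genuine gap is the boundary step, which you correctly isolate but do not close --- and the bootstrap you sketch cannot close it. At an active index $i=0$ the extra term is $(\beta_{max}-\beta(\rho_0^h))\f_0^+\le \Lip(\beta)\,\rho_0^h\,\|\f\|_{\ell^\infty}=C\,h/R$, since $\rho_0^h=h/|K_0|$; absorbing $Ch/R$ into $\mu R$ requires $R\gtrsim\sqrt{h}$, which is precisely what the running hypothesis $R\le e^{\mu t}h/\sigma=O(h)$ excludes, while the crude bound $\rho_0^h\le M$ only turns the term into an $h$‑independent constant, again not $O(R)$. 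What this term actually yields is $\frac{d}{dt}R^2\le 2\mu R^2+Ch$, i.e.\ an $O(\sqrt{h})$ bound at the boundary, not the claimed $O(h)$ bound, no matter how $\mu$ is enlarged in terms of $\sigma$, $\Lip(\beta)$, etc.

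This is not a presentational issue that a cleverer absorption fixes: take $V=0$, the repulsive Newtonian $W(x)=-|x|$ (allowed by (A-Wn)), $\beta(s)=(1-s)_+$ and $\bar\rho=\tfrac12\bbmI_{[0,1]}$, which satisfies (In1)--(In2) with $\sigma=\tfrac12$, $m=\tfrac12$, $M=1$. Then $\f_0\equiv m>0$ and $\f_1\equiv m-2h>0$ for all times (Newtonian forces depend only on the ordering), so $\dot\x_0=-\beta_{max}m$, $\dot\x_1=-\beta(\rho_0^h)(m-2h)$, and $g:=|K_0|$ obeys $\dot g=2h+\tfrac{h}{g}(m-2h)\ge \tfrac{mh}{2g}$ for small $h$; hence $g(T)\ge\sqrt{g(0)^2+\tfrac{m}{2}hT}\gtrsim\sqrt{hT}$, which violates \eqref{eq:max_part} for small $h$ and any $h$‑independent $\mu$. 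So at the boundary cells the $e^{\mu T}h/\sigma$ bound needs an additional ingredient (e.g.\ a sign condition guaranteeing $\f_0^+=0=\f_N^-$, as for attractive kernels or confining forces at the edges), or a weakening to $O(\sqrt h)$. For what it is worth, the paper's own proof of Lemma~\ref{lem:upper-bound} applies the interior computation at the extremal index without discussing $i\in\{0,N-1\}$, where the needed inequality $\rho_{i-1}^h\ge\rho_i^h$ fails under the convention $\rho_{-1}^h=0$; so you have put your finger on a real defect, but your proposed continuity/bootstrap argument does not repair it, and in the stated generality nothing can.
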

\begin{proof}
    This is a direct result of Lemma~\ref{lem:upper-bound} below.
\end{proof}

\subsection{A priori estimates for DPA}

We now provide essential results in obtaining uniform bounds on the density, and consequently, the existence of solutions to \eqref{eq:particle_complete}, as well as compactness results that we provide in Section~\ref{sec:Continuous}. The first result provides the lower bound for the increments of $\x$ (cf.\ \eqref{eq:lower_DPA}) and is attributed solely to the monotonicity of $\beta$, i.e.\ the result is independent of forces.

\begin{lemma}\label{lem:lower_DPA}
    Let $\bar{\rho}$ and $\beta$ satisfy assumptions (In1) and (A-$\beta$) respectively. Then any solution $\x\in\calC([0,T];\R^{N+1})$ of \eqref{eq:particle_complete} satisfies
\[
 \x_{i+1}(t) - \x_i(t) \ge \frac{h}{M}\,,\qquad\forall\,\text{$i=0,\ldots,N-1$, $t \in [0,T]$}\,.
\]
\end{lemma}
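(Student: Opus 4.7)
The asserted lower bound $\x_{i+1}(t)-\x_i(t)\geq h/M$ is equivalent to the upper bound $\rho_i^h(t)\leq M$ on the discrete densities defined in \eqref{eq:disc_density}, and this is how I would attack it. A direct differentiation yields
\[
    \dot\rho_i^h = -\frac{(\rho_i^h)^2}{h}\bigl(\dot\x_{i+1}-\dot\x_i\bigr),
\]
so $\rho_i^h$ is non-increasing at any time at which $\dot\x_{i+1}-\dot\x_i\geq 0$. The key observation is that this sign condition is forced precisely when $\rho_i^h$ is already large: if $\rho_i^h\geq M_\beta$, then $\beta(\rho_i^h)=0$ by (A-$\beta$), and substituting the DPA system into $\dot\x_{i+1}-\dot\x_i$ cancels every term carrying the factor $\beta(\rho_i^h)$. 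The two terms that remain for the interior indices are $-\beta(\rho_{i+1}^h)\f_{i+1}^-\geq 0$ and $\beta(\rho_{i-1}^h)\f_i^+\geq 0$, both non-negative because $\beta\geq 0$, $\f^+\geq 0$, $\f^-\leq 0$, irrespective of the sign of the forces $\f_i$ themselves. This is exactly where the upwind-type splitting of $\f_i$ pays off: it guarantees the right sign in the force-flux product.

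The boundary indices $i=0$ and $i=N-1$ have to be handled separately, since $\dot\x_0$ and $\dot\x_N$ use $\beta_{max}=\beta(0)$ in place of the non-existent neighbouring cell densities. Substituting the corresponding equations from \eqref{eq:particle_complete} and repeating the same cancellation argument, one gets $\dot\x_1-\dot\x_0 \geq \beta_{max}\f_0^+ -\beta(\rho_1^h)\f_1^-\geq 0$ whenever $\rho_0^h\geq M_\beta$, and symmetrically for $\dot\x_N-\dot\x_{N-1}$ when $\rho_{N-1}^h\geq M_\beta$. Hence the same one-sided bound $\dot\rho_i^h\leq 0$ on $\{\rho_i^h\geq M_\beta\}$ holds for all $i=0,\ldots,N-1$.

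To pass from this cell-wise differential inequality to the uniform-in-$i$ bound, I would introduce the Lipschitz envelope $\phi(t):=\max_{i=0,\ldots,N-1}\rho_i^h(t)$. At any time $t$ of differentiability, $\dot\phi(t)$ coincides with $\dot\rho_{i^\ast}^h(t)$ for some index $i^\ast$ realising the max, so whenever $\phi(t)\geq M_\beta$ the preceding step gives $\dot\phi(t)\leq 0$. A standard comparison then yields $\phi(t)\leq\max\{\phi(0),M_\beta\}$ for all $t\in[0,T]$. Finally, the definition \eqref{eq:dscr_IC_un} of the initial partition implies
\[
    \rho_i^h(0) = \frac{h}{\bar\x_{i+1}-\bar\x_i} = \frac{1}{\bar\x_{i+1}-\bar\x_i}\int_{\bar\x_i}^{\bar\x_{i+1}}\bar\rho(x)\,dx \leq \|\bar\rho\|_{L^\infty(\R)},
\]
so $\phi(0)\leq \|\bar\rho\|_{L^\infty(\R)}$ and therefore $\phi(t)\leq M=\max\{\|\bar\rho\|_{L^\infty(\R)},M_\beta\}$, which is precisely the desired conclusion.

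The only minor subtlety, which I expect to be the main technical point rather than a real obstacle, is justifying the envelope argument: $\phi$ is Lipschitz in $t$ once the increments $\x_{i+1}-\x_i$ stay strictly positive, which is itself what we are proving. A clean way around this is to observe that $\beta$ is bounded and Lipschitz and the forces $\f_i$ remain finite on any time interval on which the particles do not escape to infinity, so $\x$ is $C^1$ and $\phi$ is locally Lipschitz; the inequality $\dot\phi\leq 0$ on $\{\phi\geq M_\beta\}$ can then be applied pointwise a.e.\ without any additional regularity input beyond the existence statement of \eqref{eq:particle_complete} on the maximal interval. Since no collisions occur on that interval, the argument closes.
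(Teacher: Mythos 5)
Your proof is correct and rests on exactly the same mechanism as the paper's: since $\beta\equiv 0$ on $[M_\beta,\infty)$, the upwind splitting of the forces gives $\dot\x_{i+1}-\dot\x_i\ge 0$ (hence $\dot\rho_i^h\le 0$) whenever $\rho_i^h\ge M_\beta$, also at the boundary indices $i=0$ and $i=N-1$. The paper packages this as a contradiction at the first time some gap reaches $h/M$, whereas you run it as a comparison argument on the envelope $\max_i\rho_i^h$ together with the initial bound $\rho_i^h(0)\le\|\bar\rho\|_{L^\infty(\R)}$ from the construction \eqref{eq:dscr_IC_un}; the two formulations are equivalent, and yours has the minor merit of making the initial bound and the boundary cases explicit.
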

\begin{proof}
% We begin with the proof of the lower bound in \eqref{eq:max_part}, which we argue by contradiction. 
We argue by contradiction. Let $\tau_1>0$ be defined by
\[
 \tau_1=\inf\left\{s\in(0,T] : \exists \,i \;\text{such that}\; \x_{i+1}(t) - \x_i(t)=\frac{h}{M}\right\}.
\]
Assume now that there exists $\tau_2\in\left(\tau_1,T\right]$ such that
\[\x_{i+1}(t) - \x_i(t)<\frac{h}{M}\qquad \forall t\in\left(\tau_1,\tau_2\right].\]
Note that $\dot{\x}_i(\tau_1)\le  0$, since
\[
 \rho_i^h(\tau_1)=M\quad \mbox{and}\quad \rho_{i-1}^h(\tau_1)\le M\,,
\]
that imply 
\[
 \beta(\rho_i^h(\tau_1))=0 \quad\mbox{and}\quad \beta(\rho_{i-1}^h(\tau_1))\ge 0.
\]
Similarly, we can argue that $\dot{\x}_{i+1}(\tau_1)\ge 0$, thus the particles $\x_i$ and $\x_{i+1}$ either remain the same distance or move away each other, at least for a small time $t>\tau_1$. 
% By an iteration procedure, we can always assume that there exists an index $k>i$ such that $\dot{\x}_{k+1}(\tau_1)\geq 0$, otherwise $\dot{\x}_{i}(\tau_1)= 0$ for all $i$ and this contradicts the existence of $\tau_2$. W.l.o.g.\ we can take $k=i$. 
A Taylor expansion around $\tau_1$ gives
\begin{gather*}
    \x_{i+1}(t)=\x_{i+1}(\tau_1)+\dot{\x}_{i+1}(\tilde{t}_{i+1})(t-\tau_1)\ge \x_{i+1}(\tau_1)\quad\mbox{ for all } t\in(\tau_1,\tau_1+\delta_{i+1})\,\mbox{ and } \tilde{t}_{i+1}\in(\tau_1,t)\,,\\
    \x_{i}(t)=\x_{i}(\tau_1)+\dot{\x}_{i}(\tilde{t}_i)(t-\tau_1)\le \x_{i}(\tau_1)\quad\mbox{ for all } t\in(\tau_1,\tau_1+\delta_{i})\mbox{ and } \tilde{t}_{i}\in(\tau_1,t)\,,
\end{gather*}
and consequently
\[
 \x_{i+1}(t)-\x_{i}(t)\ge \x_{i+1}(\tau_1)-\x_{i}(\tau_1)=\frac{h}{M}\qquad\mbox{for all } t\in(\tau_1,\tau_1+\min\{\delta_{i},\delta_{i+1}\})\,,
\]
which is in contradiction with the existence of $\tau_2$.
\end{proof}

Before we proceed further, we state the following technical lemma, whose proof is postponed to Appendix~\ref{app:liploc_DPA}.

\begin{lemma}\label{lem:liploc_DPA}
    Let $\bar\rho$, $\beta$, $V$ and $W$ satisfy assumptions (In1), (A-$\beta$), (A-V) and (A-Wm) respectively. Further, let $h\in(0,1)$ and $\x\in\calC([0,T];\R^{N+1})$ be a solution of \eqref{eq:particle_complete} satisfying properties (i) and (ii) of Theorem~\ref{thm:existence-DPA}. Then there exists a constant $c_{\f}>0$, depending only on $V$, $W$, $m$ and $M$ such that
    \begin{align*}
        |\f_{i+1}-\f_i| &\le c_{\f} |K_i|\,,\qquad\text{and}\\
        |\f_{i+1}-2\f_i+\f_{i-1}| &\le c_{\f} \left(|K_i|^2 + |K_{i-1}|^2 + \bigl||K_i|-|K_{i-1}|\bigr|\right).
    \end{align*}
    % for the constant
    % \[
    %     c_{\f} = \max\Bigl\{ \|V''\|_{L^\infty(\R)} + (2+m)\|W''\|_{L^\infty(\R\setminus\{0\})} + 2M\|W'\|_{L^\infty(\R\setminus\{0\})}, \Lip(V'') + m\Lip(W'') \Bigr\}
    % \]
    
    In particular, if $W$ is the Newtonian potential, i.e.\ $W$ satisfies (A-Wn), then
    \[
        c_{\f} = \max\Bigl\{\|V''\|_{L^\infty(\R)} +2M,\Lip(V'')\Bigr\}\,,
    \]
    which depends only on $V$ and $M$.
\end{lemma}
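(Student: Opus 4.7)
My approach uses two global facts supplied by Theorem~\ref{thm:existence-DPA}: the particles remain in a fixed compact set $[-R,R]$ by property (ii), on which $V''$ and (on $\R\setminus\{0\}$) $W',W'',W'''$ are uniformly bounded; and $|K_i|\ge h/M$ by property (i), so every explicit factor of $h$ can be traded for a factor of $|K_i|$ or $|K_{i-1}|$. For the first estimate I would expand $\f_{i+1}-\f_i$ and split the $j$-summation into the adjacent indices $j\in\{i,i+1\}$ and the rest. Oddness of $W'$ merges the two adjacent contributions into a single term $2hW'(\x_{i+1}-\x_i)$, yielding
\[
    \f_{i+1}-\f_i = [V'(\x_{i+1})-V'(\x_i)] + 2hW'(\x_{i+1}-\x_i) + h\sum_{j\notin\{i,i+1\}}[W'(\x_{i+1}-\x_j)-W'(\x_i-\x_j)].
\]
The mean value theorem bounds the first bracket by $\|V''\|_{L^\infty([-R,R])}|K_i|$ and the self-interaction by $2M\|W'\|_\infty|K_i|$. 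For the remaining sum, ordering of the particles forces $\x_{i+1}-\x_j$ and $\x_i-\x_j$ to share a sign, so the segment between them avoids the origin; mean value on $\R\setminus\{0\}$ together with $h\sum_j=m$ controls it by $m\|W''\|_{L^\infty(\R\setminus\{0\})}|K_i|$. Summing gives the first inequality.

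For the second estimate I would apply the same decomposition to $\f_i-\f_{i-1}$, subtract, and re-index the remaining sums via oddness of $W'$. The outcome has the form
\[
    \f_{i+1}-2\f_i+\f_{i-1} = [V'(\x_{i+1})-2V'(\x_i)+V'(\x_{i-1})] + Ch[W'(|K_i|)-W'(|K_{i-1}|)] + \Sigma,
\]
for an explicit small integer $C$, where
\[
    \Sigma = h\sum_{j\notin\{i-1,i,i+1\}}[W'(\x_{i+1}-\x_j)-2W'(\x_i-\x_j)+W'(\x_{i-1}-\x_j)].
\]
The $V$-bracket I would split as $V''(\xi_1)(|K_i|-|K_{i-1}|)+(V''(\xi_1)-V''(\xi_2))|K_{i-1}|$ and control via $\|V''\|_\infty ||K_i|-|K_{i-1}||$ for the first piece and $\Lip(V'')(|K_i|+|K_{i-1}|)|K_{i-1}| \le \tfrac{3}{2}\Lip(V'')(|K_i|^2+|K_{i-1}|^2)$ for the second (Young). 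The self-interaction term is bounded by $CM\|W''\|_\infty ||K_i|-|K_{i-1}||$ via mean value of $W'$ on $(0,\infty)$ and $h\le M|K_{i-1}|$. For $\Sigma$, a second-order Taylor expansion of $W'$ centered at $\x_i-\x_j$ (valid since $\x_i\ne\x_j$ and the relevant interval stays away from $0$) bounds each summand by $\|W''\|_\infty ||K_i|-|K_{i-1}|| + \tfrac{1}{2}\|W'''\|_\infty(|K_i|^2+|K_{i-1}|^2)$, which sums to total weight $m$. Combining produces the second inequality.

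For the Newtonian potential $W(x)=\pm|x|$ every $W$-contribution vanishes identically: since $W'=\pm\sign$ is locally constant, one has $W'(|K_i|)=W'(|K_{i-1}|)=\pm 1$, and in the distant sums of both estimates all non-adjacent $\x_j$ lie strictly on one side of the triple $(\x_{i-1},\x_i,\x_{i+1})$, where the first and second differences of $W'$ are zero. Only the $V$-contributions survive, and tracking constants in the above bounds (using $\|W'\|_\infty=1$ and $\Lip(V')=\|V''\|_\infty$) gives precisely $c_\f=\max\{\|V''\|_{L^\infty(\R)}+2M,\Lip(V'')\}$.

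The only delicate point is the bookkeeping involved in the three index sets $\{j\ne i+1\}$, $\{j\ne i\}$, $\{j\ne i-1\}$ and the correct assembly of boundary contributions after each invocation of oddness of $W'$; once the two displayed decompositions are in place, the remaining steps are routine applications of the mean value and Taylor theorems against the uniform bounds from Theorem~\ref{thm:existence-DPA}.
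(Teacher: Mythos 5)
Your proof is correct and follows essentially the same route as the paper's Appendix~\ref{app:liploc_DPA}: split $\f_i$ into its $V$- and $W$-parts, treat the adjacent indices separately using the oddness of $W'$, and control the distant sums via mean value/second-order Taylor bounds on $\R\setminus\{0\}$ together with $h\le M|K_i|$ and total mass $m$ (your estimate for $\Sigma$ is exactly the paper's inequality $|g(r+a)-2g(r)+g(r-b)|\le \Lip(g')\tfrac{a^2+b^2}{2}+|g'(r)||a-b|$ applied to $g=W'$). The only blemish is the Newtonian case: it is not true that every $W$-contribution vanishes identically---the first difference keeps the self-interaction term $\f_{i+1}^W-\f_i^W=\pm 2h$, whose bound $2h\le 2M|K_i|$ is precisely the source of the $2M$ in $c_{\f}$; since your constant tracking retains this term (and the second difference of the $W$-part does vanish), the stated constant is nevertheless correct.
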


In the case when $\bar\rho$ satisfies additionally (In2), we also obtain an upper bound for the increments of $\x$, which is essential in establishing the limiting gradient structure for \eqref{eq:main} in the general case.

\begin{lemma}\label{lem:upper-bound}
    Let $\bar\rho$, $\beta$, $V$ and $W$ satisfy assumptions (In1), (A-$\beta$), (A-V) and (A-Wm) respectively. Then for any $h\in(0,1)$, a solution $\x\in\calC([0,T];\R^{N+1})$ of \eqref{eq:particle_complete} satisfies
    \[
        |K_i|(t) = \x_{i+1}(t)-\x_i(t) \leq e^{\mu T}\max_{j=0,\ldots,N-1}|\bar\x_{j+1}- \bar\x_j|\,,\qquad \text{$i=0,\ldots,N-1$, $t \in [0,T]$}\,,
    \]
for any constant $\mu>c_{\f}\beta_{max}$, where $c_{\f}$ is the constant appearing in Lemma~\ref{lem:liploc_DPA}.

In particular, if $\bar\rho$ satisfies additionally (In2), then
\[
    |K_i|(t) \le e^{\mu T} \frac{h}{\sigma}\,, \qquad \text{$i=0,\ldots,N-1$, $t \in [0,T]$}\,.
\]
\end{lemma}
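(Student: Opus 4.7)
The plan is to establish a Gronwall-type differential inequality for $L(t):=\max_{0\le i\le N-1}|K_i|(t)$, then integrate it over $[0,T]$ to obtain $L(t)\le e^{\mu T}L(0)$. To that end, I first compute $\dot|K_i|=\dot\x_{i+1}-\dot\x_i$ using \eqref{eq:particle_complete}. For an interior index $i\in\{1,\ldots,N-2\}$, after adding and subtracting $\beta(\rho_i^h)\f_{i+1}^-$ and $\beta(\rho_i^h)\f_i^+$ and using $\f=\f^++\f^-$, one obtains the identity
\[
    \dot|K_i| = \beta(\rho_i^h)(\f_i-\f_{i+1}) + \bigl(\beta(\rho_i^h)-\beta(\rho_{i+1}^h)\bigr)\f_{i+1}^- - \bigl(\beta(\rho_i^h)-\beta(\rho_{i-1}^h)\bigr)\f_i^+,
\]
and analogous identities hold at the boundary indices $i=0,\,N-1$, in which $\beta(\rho_{-1}^h)$ and $\beta(\rho_N^h)$ are formally replaced by $\beta_{max}=\beta(0)$.

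As the maximum of finitely many Lipschitz functions, $L$ is Lipschitz and differentiable almost everywhere, with $\dot L(t)=\dot|K_{i^*}|(t)$ for some $i^*\in\operatorname{argmax}_i|K_i|(t)$. For an interior argmax $i^*$, the inequalities $|K_{i^*}|\ge|K_{i^*\pm 1}|$ translate (via $\rho_j^h=h/|K_j|$) into $\rho_{i^*}^h\le\rho_{i^*\pm 1}^h$. By the monotonicity of $\beta$ in assumption (A-$\beta$), $\beta(\rho_{i^*}^h)\ge\beta(\rho_{i^*\pm 1}^h)$, which combined with the sign properties $\f^-\le 0$ and $\f^+\ge 0$ renders the two $\beta$-difference terms nonpositive. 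Invoking $|\f_{i^*+1}-\f_{i^*}|\le c_{\f}|K_{i^*}|$ from Lemma~\ref{lem:liploc_DPA} then yields the clean bound $\dot L(t)\le\beta_{max}c_{\f}\,L(t)$.

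In the boundary-argmax case $i^*\in\{0,N-1\}$, the analogous identity carries an additional non-negative term of the form $(\beta_{max}-\beta(\rho_0^h))\f_0^+$ (and its mirror at $i^*=N-1$) that is not eliminated by monotonicity. Using $\beta_{max}-\beta(\rho_0^h)\le\Lip(\beta)\rho_0^h=\Lip(\beta)h/|K_0|$ together with a uniform bound on $|\f_0|$ obtained from Theorem~\ref{thm:existence-DPA}(ii) and assumptions (A-V), (A-Wm), this residual is of lower order relative to $L$ and can be absorbed by enlarging the Gronwall rate strictly beyond $c_{\f}\beta_{max}$; this is the reason the statement permits any $\mu>c_{\f}\beta_{max}$. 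Integrating $\dot L\le\mu L$ over $[0,T]$ gives the main estimate, and the ``in particular'' bound under (In2) is immediate: $\bar\rho\ge\sigma$ on $\operatorname{supp}(\bar\rho)$ together with $\int_{\bar\x_i}^{\bar\x_{i+1}}\bar\rho\,dx=h$ forces $\bar\x_{i+1}-\bar\x_i\le h/\sigma$, so $L(0)\le h/\sigma$.

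The interior case is a direct consequence of the monotonicity of $\beta$; the main obstacle is the boundary-argmax case, where the ``ghost'' coefficient $\beta_{max}=\beta(0)$ (corresponding to $\rho_{-1}^h\equiv 0$) breaks the cancellation used in the interior and must be carefully absorbed into the Gronwall rate---precisely the reason for the strict inequality on $\mu$ in the statement.
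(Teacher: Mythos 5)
Your interior argument is, in substance, the paper's: the paper runs a contact--time contradiction (at the first time $\tau_1$ at which some $|K_i|$ touches the barrier $c_Ke^{\mu t}$ the neighbouring cells are no larger, hence $\rho_{i\pm1}^h(\tau_1)\ge\rho_i^h(\tau_1)$, the two $\beta$-difference terms are nonpositive by monotonicity of $\beta$, and Lemma~\ref{lem:liploc_DPA} gives $\frac{d}{dt}\bigl[e^{-\mu t}|K_i|\bigr]\big|_{t=\tau_1}\le e^{-\mu\tau_1}\bigl(\beta_{max}c_{\f}-\mu\bigr)|K_i|(\tau_1)<0$), which is equivalent to your Gronwall inequality for $L(t)=\max_i|K_i|(t)$. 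Note, however, that the strictness $\mu>c_{\f}\beta_{max}$ is used there only to get strict descent at the contact time; it is not available as a reserve for absorbing boundary contributions, contrary to your reading of the statement.

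The genuine gap is your treatment of a boundary argmax, and it cannot be closed the way you propose. The residual $(\beta_{max}-\beta(\rho_0^h))\f_0^+$ is bounded by $\Lip(\beta)\,\rho_0^h\,\|\f\|_{\ell^\infty}=\Lip(\beta)\,h\,\|\f\|_{\ell^\infty}/L$, so absorbing it into $(\mu-c_{\f}\beta_{max})L$ requires $L(t)^2\gtrsim h$ uniformly; but in the regime the lemma addresses one has $L(0)\le h/\sigma$, so the residual is of size $O(1)$ (indeed it equals $\beta_{max}\f_0^+$ whenever $\rho_0^h\ge M_\beta$), i.e.\ it is an additive, not a lower-order multiplicative, perturbation. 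Gronwall then only yields $L(t)\le e^{\mu t}\bigl(L(0)+Ct\bigr)$, which is far weaker than the claimed $e^{\mu T}L(0)$ since $L(0)=O(h)$. In fact no enlargement of $\mu$ can repair this: take $\beta(s)=(1-s)_+$, $V(x)=x$, $W\equiv0$, $\bar\rho=2\,\bbmI_{[0,1]}$ (so (In1)--(In2) hold with $\sigma=2$ and $c_{\f}=0$); then $\f_i\equiv1$, hence $\dot\x_0=-\beta_{max}$, $\dot\x_1=-\beta(\rho_0^h)$, and $\frac{d}{dt}|K_0|=\min\{1,h/|K_0|\}$, which integrates to $|K_0|(t)=\sqrt{2ht}$ for $t\ge h/2$; for any fixed $\mu$ and $T$ this exceeds $e^{\mu T}h/\sigma$ once $h$ is small, because the leftmost cell expands into vacuum with the "ghost" mobility $\beta(0)=\beta_{max}$. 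So the extremal cells genuinely escape the asserted estimate and must either be excluded from the statement or controlled by a different mechanism (e.g.\ a sign condition ensuring $\f_0^+=0$ and $\f_N^-=0$). Your diagnosis of where the difficulty sits is accurate---the paper's own proof only carries out the interior computation, and with the convention $\rho_{-1}^h=0$ the inequality $\rho_{i-1}^h\ge\rho_i^h$ it invokes fails at $i=0$---but the proposed absorption into the Gronwall rate does not close the gap.
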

\begin{proof} Let $c_K:=\max_{j=0,\ldots,N-1}|\bar\x_{j+1}-\bar\x_j|$. We introduce the time
\[ 
    \tau_1:= \inf \left\{ s \in (0,T] :\,\exists\,i\;\text{such that}\;|K_i|(s) \geq c_K e^{\mu s} \right\},
    % \tau_1:= \inf \left\{ s \in (0,T] :\,\exists\,i\;\text{such that}\;|K_i|(s) \geq e^{\mu s}\frac{h}{\sigma} \right\}, 
\]
Note that if $\tau_1=T$ the bound follows trivially, so we consider $\tau_1 < T$. For sake of contradiction, assume that there is a time $\tau_2 \in (\tau_1,T]$ such that  
\begin{equation}\label{contrprinmin}
|K_i|(t) > c_K e^{\mu t} \quad \mbox{ for every $t \in (\tau_1,\tau_2]$.}
% |K_i|(t) > e^{\mu t} \frac{h}{\sigma} \quad \mbox{ for every $t \in (\tau_1,\tau_2]$.}
\end{equation}
By construction, at the time $\tau_1$ we have that 
\[ 
    |K_i|(\tau_1) = c_Ke^{\mu \tau_1}\quad \mbox{ and }\quad |K_j|(\tau_1) \leq c_Ke^{\mu \tau_1}\qquad \forall\,j\neq i,
% |K_i|(\tau_1) = e^{\mu \tau_1}\frac{h}{\sigma}\quad \mbox{ and }\quad |K_j|(\tau_1) \leq e^{\mu \tau_1}\frac{h}{\sigma}\quad \forall\,j\neq i,
\]
thus, in particular,
\[ \rho_{i+1}^h(\tau_1) \geq \rho_i^h(\tau_1)\quad \mbox{ and }\quad \rho_{i-1}^h(\tau_1) \geq \rho_i^h(\tau_1)\,. \]
We then compute
\begin{align*}
\frac{d}{dt}\left[ e^{-\mu t}|K_i|(t)\right] \bigg|_{t=\tau_1} &= e^{-\mu\tau_1}\left[(\dot{\x}_{i+1}(\tau_1) -\dot{\x}_i(\tau_1)) - \mu |K_i|(\tau_1)\right] \\
&= e^{-\mu\tau_1}\biggl[-\underbrace{\left(\beta(\rho_{i+1}^h(\tau_1))-\beta(\rho_i^h(\tau_1)\right)\f_{i+1}^{-}}_{\ge 0}+\underbrace{\left(\beta(\rho_{i-1}^h(\tau_1))-\beta(\rho_i^h(\tau_1)\right)\f_i^{+}}_{\le 0} \\
&\hspace{6em} \qquad-\beta(\rho_i^h(\tau_1))\left(\f_{i+1}-\f_i\right)- c_K e^{\mu\tau_1}\mu  \biggr] \\
% &\textcolor{teal}{=e^{-c\tau_1}\biggl[-\beta(\rho_i^h(\tau_1))\underbrace{\left(\frac{1}{h}\bigl[ P(\rho_{i+1}^h) - P(\rho_{i}^h) \bigr] -\frac{1}{h}\bigl[ P(\rho_{i}^h) - P(\rho_{i-1}^h) \bigr] \right)}_{\geq 0}-\beta(\rho_i^h(\tau_1))\left(\f_{i+1}^T-\f_i^T\right)- e^{c\tau_1}c \frac{h}{\sigma} \biggr]}\\
&\leq e^{-\mu\tau_1}\left[\beta(\rho_i^h(\tau_1))|\f_{i+1}-\f_i|-c_K e^{\mu\tau_1}\mu \right]\,.
\end{align*}
Under the assumptions on $V$ and $W$, Lemma~\ref{lem:liploc_DPA} gives an appropriate constant $c_{\f}>0$ such that
\[
 |\f_{i+1}-\f_i| \le c_{\f}|K_i|\,.
\]
This yields,
\begin{align*}
\frac{d}{dt}\left[ e^{-\mu t}|K_i|(t))\right]_{|_{t=\tau_1}}&\leq e^{-\mu \tau_1}\left [\beta_{max}c_{\f}\,|K_i|(\tau_1)- c_K e^{\mu\tau_1}\mu  \right]\\
& \leq c_K\left [\beta_{max}c_{\f}-\mu  \right]<0\,,
\end{align*}
where the last inequality holds because of our choice of $\mu$. The inequality above leads to a contradiction. Indeed, we find some positive $\delta \ll 1$ for which $\tau_1 + \delta < \tau_2$ and $\frac{d}{dt}\left[ e^{-\mu t}|K_i(t)|\right] < 0$ for all $t \in (\tau_1,\tau_1+\delta]$. Then, 
\[ e^{-\mu t}|K_i(t)| = c_K + \int_{\tau_1}^t \frac{d}{ds}\left[ e^{-\mu s}|K_i(s)|\right]ds \leq c_K\,,  \]
which clearly contradicts~\eqref{contrprinmin}, thereby concluding the of the first statement.

If $\bar\rho$ satisfies additionally (In2), then by construction,
\[
    h = \int_{\bar\x_i}^{\bar\x_{i+1}} \bar\rho(y)\,dy \ge \sigma\,|\bar\x_{i+1}-\bar\x_{i}| \qquad\text{for any $i=0,\ldots,N-1$}\,,
\]
which consequently gives the bound $c_K \le h/\sigma$.
\end{proof}

\subsection{Existence result for DPA}

To establish the existence of solutions to \eqref{eq:particle_complete}, we begin by discussing the solvability of an auxiliary system. Namely, for each $\varepsilon>0$, we consider the system
\begin{equation}\label{eq:particle_aux}\tag{{DPA}$^\varepsilon$}
\left\{\qquad \begin{aligned}
\dot{\x}_0 &= -\beta(\rho_0^{\varepsilon,h})\f_0^{\varepsilon,-} - \beta_{max}\f_0^{\varepsilon,+}\\
    \dot\x_i &= -\beta(\rho_i^{\varepsilon,h})\f_i^{\varepsilon,-}-\beta(\rho_{i-1}^{\varepsilon,h})\f_i^{\varepsilon,+}\\
    \dot{\x}_N &= -\beta_{max}\f_N^{\varepsilon,-}-\beta(\rho_{N-1}^{\varepsilon,h})\f_N^{\varepsilon,+}
    \end{aligned}\qquad\text{for $i=1,\ldots,N-1$}\,, \right.
\end{equation}
where we replace $\f_i$ with
\[
    \f_i^\varepsilon = V'(\x_i) - \sum_{j>i} h W'\bigl(\max\{\varepsilon, \x_j - \x_i\}\bigr) + \sum_{j<i} h W'\bigl(\max\{\varepsilon, \x_i - \x_j\}\bigr)\,,
\]
and $\rho^h$ with
\[
    \rho_i^{\varepsilon,h}(t) = \frac{h}{\max\{\varepsilon,\x_{i+1}(t) -\x_i(t)\}},\qquad i=0,\ldots,N-1\,.
\]
In this way, the right-hand side of \eqref{eq:particle_aux} for each $\varepsilon>0$ is easily seen to be continuous w.r.t.\ $\x$. In particular, the classical Peano theorem for local-in-time existence applies.

\medskip

With the uniform estimate obtained in Lemma~\ref{lem:lower_DPA}, we are now prepared to proof the existence of local solutions to \eqref{eq:particle_complete}. 

\begin{lemma}\label{lem:local-DPA}
    Let $\bar\rho$, $\beta$, $V$ and $W$ satisfy the assumptions (In1), (A-$\beta$), (A-V) and (A-Wm) respectively. Then there exists a local-in-time solution $\x$ to the deterministic particle approximation \eqref{eq:particle_complete}.
\end{lemma}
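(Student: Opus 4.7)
The plan is to obtain a local solution to \eqref{eq:particle_complete} by regularizing into the auxiliary system \eqref{eq:particle_aux}, applying Peano's existence theorem, and then showing that for sufficiently small $\varepsilon>0$ the regularization never activates along the solution, so that it in fact solves the original system.

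\textbf{Step 1 (Peano for the regularized system).} For any fixed $\varepsilon>0$, the right-hand side of \eqref{eq:particle_aux} is continuous in $\x\in\R^{N+1}$: the discrete density $\rho_i^{\varepsilon,h}$ is bounded by $h/\varepsilon$ and depends continuously on $\x$, $\beta$ is Lipschitz by (A-$\beta$), the arguments $\max\{\varepsilon,\x_j-\x_i\}$ appearing in $\f_i^\varepsilon$ are bounded below by $\varepsilon>0$ and thus stay inside the regular regime required by (A-V) and (A-Wm), and the positive/negative part operations preserve continuity. Peano's theorem therefore yields some $T_\varepsilon>0$ and a $C^1$ solution $\x^\varepsilon$ of \eqref{eq:particle_aux} on $[0,T_\varepsilon]$ with $\x^\varepsilon(0)=\bar{\x}$.

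\textbf{Step 2 (Uniform lower bound).} I restrict to $\varepsilon\in(0,h/M)$ and claim that $\x_{i+1}^\varepsilon(t)-\x_i^\varepsilon(t)\ge h/M$ for every $i=0,\ldots,N-1$ and every $t\in[0,T_\varepsilon]$. Initially, the construction \eqref{eq:dscr_IC_un} gives $h=\int_{\bar{\x}_i}^{\bar{\x}_{i+1}}\bar\rho\,dx\le\|\bar\rho\|_{L^\infty}|\bar{\x}_{i+1}-\bar{\x}_i|\le M|\bar{\x}_{i+1}-\bar{\x}_i|$, so the bound holds at $t=0$. If it were first violated at some time $\tau_1\le T_\varepsilon$ by the pair $(i,i+1)$, then $|K_i^\varepsilon|(\tau_1)=h/M>\varepsilon$, so the regularization in the density term is inactive and $\rho_i^{\varepsilon,h}(\tau_1)=M\ge M_\beta$, giving $\beta(\rho_i^{\varepsilon,h}(\tau_1))=0$. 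The argument of Lemma~\ref{lem:lower_DPA} then transfers verbatim: $\dot{\x}_i^\varepsilon(\tau_1)\le 0$ and $\dot{\x}_{i+1}^\varepsilon(\tau_1)\ge 0$, so $|K_i^\varepsilon|$ is nondecreasing at $\tau_1$, contradicting the definition of $\tau_1$.

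\textbf{Step 3 (Removal of the regularization).} Step 2 implies $\x_j^\varepsilon(t)-\x_i^\varepsilon(t)\ge(j-i)\,h/M\ge h/M>\varepsilon$ for all $j>i$ and $t\in[0,T_\varepsilon]$, so every $\max\{\varepsilon,\cdot\}$ in the definitions of $\rho_i^{\varepsilon,h}$ and $\f_i^\varepsilon$ collapses to its second argument. Consequently $\rho_i^{\varepsilon,h}=\rho_i^h$ and $\f_i^\varepsilon=\f_i$ along $\x^\varepsilon$, so $\x^\varepsilon$ solves \eqref{eq:particle_complete} on $[0,T_\varepsilon]$, providing the required local-in-time solution. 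The one genuinely delicate point is ensuring that the threshold argument of Lemma~\ref{lem:lower_DPA} remains applicable inside the regularized flow rather than only a posteriori; this is precisely what forces the choice $\varepsilon<h/M$ and is what makes the two-step (regularize, then absorb the regularization) scheme work.
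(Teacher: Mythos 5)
Your proposal is correct and follows essentially the same route as the paper: regularize into \eqref{eq:particle_aux}, invoke Peano for the continuous right-hand side, and observe that for $\varepsilon<h/M$ the lower bound of Lemma~\ref{lem:lower_DPA} keeps all gaps above $\varepsilon$, so the regularized solution solves \eqref{eq:particle_complete}. Your Step 2, rerunning the threshold argument inside the regularized flow, merely makes explicit a detail the paper leaves implicit when it applies Lemma~\ref{lem:lower_DPA} directly to $\x^\varepsilon$.
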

\begin{proof}
    Due to Lemma~\ref{lem:lower_DPA}, any solution $\x^\varepsilon$ of \eqref{eq:particle_aux} for $\varepsilon < h/M$, is in fact a solution of \eqref{eq:particle_complete}. 
\end{proof}

With a local solution of \eqref{eq:particle_complete} at hand, we now show that the solution can be extended globally.

\begin{lemma}\label{lem:extend-DPA}
    Let $\bar\rho$, $\beta$, $V$ and $W$ satisfy the assumptions (In1), (A-$\beta$), (A-V) and (A-Wm) respectively. Further, let $\x$ be a local solution to \eqref{eq:particle_complete}. Then
    \begin{gather*}
        \sup\nolimits_{i=0,\ldots,N} |\x_i(t)| \le q_1(t),\qquad |\x_N(t)-\x_0(t)|\le q_2(t)\,,\quad\text{and}\\[0.5em]
        \sup\nolimits_{i=0,\ldots,N}|\f_i| \le C\bigl(1 + q_1(t) + q_2(t)\bigr)\,,
    \end{gather*}
    for some constant $C>0$ independent of $N$, and where $q_1,q_2\in C([0,\infty))$ are monotonically increasing positive functions. In particular, the solution $\x$ may be extended to any bounded interval $[0,T]$, $T>0$.
\end{lemma}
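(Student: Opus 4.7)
The plan is a Grönwall-type argument that exploits the linear growth hypotheses on $V'$ and $W'$ together with the uniform bound $\beta \le \beta_{max}$. I would introduce the two scalar quantities $R(t) := \max_{i} |\x_i(t)|$ and $D(t) := \x_N(t) - \x_0(t)$; because Lemma~\ref{lem:lower_DPA} preserves the ordering of particles on the interval of existence, $R(t) = \max\{|\x_0(t)|,|\x_N(t)|\}$ and in particular $D(t)\le 2R(t)$.

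The first step is to bound $|\f_i|$ by $R+D$. Using (A-V), (A-Wm), and $\sum_{j\ne i} h \le m$, one obtains
\[
|\f_i(t)| \le c_V(1+|\x_i(t)|) + c_W\sum_{j\ne i} h\bigl(1+|\x_i(t)-\x_j(t)|\bigr) \le C_0\bigl(1 + R(t) + D(t)\bigr)
\]
for a constant $C_0$ depending only on $c_V$, $c_W$ and $m$. This is already the third asserted bound once $q_1$ and $q_2$ are constructed. Next, from $|\f_i^\pm|\le |\f_i|$ and $\beta\le\beta_{max}$, the equations \eqref{eq:particle_complete} give $|\dot\x_i(t)| \le \beta_{max}|\f_i(t)| \le A + B R(t)$ after absorbing $D\le 2R$ into the constants $A,B>0$. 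Integrating and taking the supremum in $i$,
\[
R(t) \le R(0) + At + B\int_0^t R(s)\,ds,
\]
so Grönwall's inequality yields the continuous monotone increasing positive function
\[
q_1(t) := R(0)\,e^{Bt} + \tfrac{A}{B}\bigl(e^{Bt}-1\bigr),
\]
and setting $q_2(t) := 2q_1(t)$ covers the diameter bound. Feeding these back into the force estimate gives $|\f_i(t)| \le C(1 + q_1(t) + q_2(t))$ with $C$ depending only on $c_V$, $c_W$, $m$, $\beta_{max}$; in particular, independent of $N$ since $Nh=m$ is fixed and $R(0)\le \max\{|\bar x_{min}|,|\bar x_{max}|\}$ is controlled by the support of $\bar\rho$ alone.

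Global extension then follows by a classical maximal-interval argument: on any bounded subinterval of existence the trajectory lies inside a fixed compact subset of $\R^{N+1}$ (the upper bound $q_1$ above together with the $N$-dependent but uniform-in-time lower bound from Lemma~\ref{lem:lower_DPA}), so the local existence result of Lemma~\ref{lem:local-DPA} can be reapplied at the right endpoint of any such interval. This rules out finite-time blow-up and produces a solution on an arbitrary prescribed $[0,T]$. I do not anticipate a substantive obstacle; the only minor bookkeeping point is tracking the $N$-independence of the constants, which reduces to the two facts noted above (fixed total mass $Nh=m$ and $N$-uniform control of the initial spread from the compact support of $\bar\rho$).
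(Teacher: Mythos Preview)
Your argument is correct and in fact more direct than the paper's. Both proofs start from the same force bound $|\f_i|\lesssim 1+|\x_i|+|\x_N-\x_0|$ (linear growth of $V'$, $W'$ plus $\sum_{j\ne i}h=m$) and close via Grönwall, but the paper works with the quadratic functional $\mathcal{J}(\x)=|\x_0|^2+|\x_N-\x_0|^2+|\x_N|^2$, tracking the diameter $|\x_N-\x_0|$ separately and invoking Lemma~\ref{lem:liploc_DPA} (telescoped to get $|\f_N-\f_0|\le c_{\f}|\x_N-\x_0|$) for that piece. Your observation that the preserved ordering forces $D\le 2R$, so a single integral Grönwall on $R(t)=\max_i|\x_i(t)|$ suffices, short-circuits that detour entirely; it also avoids the mildly awkward feature that Lemma~\ref{lem:liploc_DPA} is stated under a boundedness hypothesis on the diameter, which is part of what one is trying to establish here. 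The paper's route yields a diameter bound $q_2$ that is formally decoupled from $q_1$, but for the lemma as stated your choice $q_2=2q_1$ is entirely adequate, and the $N$-independence bookkeeping you flag (fixed mass $Nh=m$, initial spread controlled by $\mathrm{supp}\,\bar\rho$) is exactly what is needed.
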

\begin{proof}
    Since $V''\in L^\infty(\R)$ and $W'$ satisfies the linear growth assumption, we have that
    \begin{align*}
        |\f_i| \le |V'(0)| + \|V''\|_{L^\infty(\R)}|\x_i| + m c_W\bigl(1 +|\x_N-\x_0|\bigr)\,,\qquad i=0,\ldots,N\,.
    \end{align*}
Therefore, a local solution $\x$ of \eqref{eq:particle_complete} satisfies
    \begin{align*}
    \frac{1}{2}\frac{d}{dt}|\x_i|^2 &= -\langle \x_i, \beta(\rho_i^h)\f_i^-+\beta(\rho_{i-1}^h)\f_i^+\rangle
    \le \beta_{max}|\x_i||\f_i| \\
    &\le c_1 |\x_i|^2 + c_2|\x_N-\x_0| + c_3\,,\qquad i=0,\ldots,N\,,
\end{align*}
with appropriate constants $c_1,c_2,c_3>0$.

On the other hand, due to Lemma~\ref{lem:liploc_DPA}, we find that
\begin{align*}
     \frac{1}{2}\frac{d}{dt}|\x_N-\x_0|^2 &= -\langle \x_N-\x_0,\beta_{max}\f_N^- + \beta(\rho_{N-1}^h)\f_N^+ -\beta(\rho_0^{h})\f_0^- - \beta_{max}\f_0^+\rangle \\
     &= -\langle \x_N-\x_0,\beta_{max}(\f_N -\f_0)+ (\beta(\rho_{N-1}^h)- \beta_{max})\f_N^++ (\beta_{max} - \beta(\rho_0^{h}))\f_0^- \rangle 
     \\
        &\le c_{\f}\beta_{max} |\x_N-\x_0|^2 + \beta_{max}|\x_N-\x_0|\bigl(|\f_N| + |\f_0|\bigr) \\
        &\le c_4|x_N-\x_0|^2 + c_5\Bigl( |\x_0|^2 + |\x_N|^2\Bigr) + c_6\,,
\end{align*}
with appropriate constants $c_4,c_5,c_6>0$.

Considering the functional
\[
    \mathcal{J}(\x):= |\x_0|^2 + |\x_N-\x_0|^2 + |\x_N|^2\,,
\]
we deduce the differential inequality
\[
    \frac{d}{dt}\mathcal{J}(\x) \le \tilde{c}_1 \mathcal{J}(\x) + \tilde{c_2}\,.
\]
with constants $\tilde c_1,\tilde c_2>0$. In particular, an application of Gr\"onwall's inequality yields
\[
    \mathcal{J}(\x(t)) \le \Bigl(\mathcal{J}(\bar\x) + \tilde c_2 t\Bigr)\, e^{\tilde c_1 t}\qquad\text{for every $t\in[0,T]$}\,,
\]
which then allows us to extend our local solutions to global ones. 
\end{proof}

% \begin{proof}
% Assume that the potential is repulsive, thus
% \begin{align*}
%      \frac{d}{dt}|\x_N-\x_0|^2 &= -\langle \x_N-\x_0,\beta_{max}\f_N^- + \beta(\rho_{N-1}^h)\f_N^+ -\beta(\rho_0^{h})\f_0^- - \beta_{max}\f_0^+\rangle \\
%      &= -\langle \x_N-\x_0,\beta_{max}(\f_N -\f_0)+ (\beta(\rho_{N-1}^h)- \beta_{max})\f_N^++ (\beta_{max} - \beta(\rho_0^{h}))\f_0^- \rangle 
%      \\
%         &= -\langle \x_N-\x_0,\beta_{max}(\f_N -\f_0)\rangle 
% \end{align*}
% where the last equality holds because of the repulsive assumption and the maximum principle. Then we have
% \begin{align*}
%     \frac{d}{dt}|\x_N-\x_0|^2 & \leq  \beta_{max}|\x_N-\x_0||\f_N -\f_0|\leq \beta_{max}c_{\f}^1|\x_N-\x_0|^2
% \end{align*}
% \end{proof}

\subsection{A gradient flow structure for DPA} \label{sec:discrete-GF}

In this section, we propose a gradient flow structure for the particle system \eqref{eq:particle_complete}, i.e.\ we postulate a {\em free energy} $\calF_h$ and  {\em dual dissipation potential} $\calR_h^*$ for which equation \eqref{eq:particle_complete} takes the form
\begin{align}\label{eq:GGF-discrete}
	\dot\x = \partial_2\calR_h^*(\x,-\calF_h'(\x))\,,
\end{align}
where $\calF_h'$ is the Fr\'echet derivative of the discrete free energy $\calF_h:\calK_N\to \R$ given by
\begin{equation}\label{discrete_ener_fun}
	\calF_h(\x) = \sum_{i=0}^{N-1} V(\x_i) + \frac{1}{2}\sum_{i=0}^{N-1}\sum_{j\ne i} h W(\x_i-\x_j)\,,\qquad \x\in\calK_N.
\end{equation}
% \oliver{
% \[
% 	\calF_h(\x,\rho^h) = \sum_{i=0}^{N-1} \frac{1}{\rho_i^h}U(\rho_i^h) + \sum_{i=0}^N V(\x_i) + \frac{1}{2}\sum_{i=0,\,j\ne i}^N h W(\x_i-\x_j)\,,\qquad \x\in\calK_N.
% \]
% \begin{align*}
%     \frac{d}{dt} \calF_h(\x,\rho^h) = - \sum_{i=0}^{N-1} \dot\rho_i^h\frac{P(\rho_i^h)}{(\rho_i^h)^2} + \sum_{i=0}^N \dot\x_i \f_i^T
% \end{align*}
% \[
%     D\calF_h(\x,\rho^h)[\eta,\gamma] = \sum_{i=0}^N \eta_i\left( V'(\x_i) + \sum_{j\ne i}h W'(\x_i-\x_j)\right) + \sum_{i=0}^{N-1} \gamma_i\left( -\frac{P(\rho_i^h)}{(\rho_i^h)^2}\right)
% \]
% \[
%     \dot\x_i = -\beta(\rho_i^h)\f_i^{T,-} -\beta(\rho_{i-1}^h)\f_i^{T,+}
% \]
% \begin{align*}
%     \dot\rho_i^h &= -\frac{\rho_i^h}{|K_i|}\bigl(\dot\x_{i+1}-\dot\x_i\bigr) = \frac{\rho_i^h}{|K_i|}\bigl(g(\rho_i^h,\rho_{i+1}^h)\f_{i+1}^D-g(\rho_{i-1}^h,\rho_i^h)\f_i^D\bigr) \\
%     &= \frac{\rho_i^h}{|K_i|}\frac{\varphi(\rho_{i+1}^h)-2\varphi(\rho_i^h) + \varphi(\rho_{i-1}^h)}{h}
% \end{align*}
% }
% Recall that $\rho_i^h = h/(\x_{i+1}-\x_i)$, $i=1,\ldots,N-1.$
Here, the set $\calK_N\subset\R^N$ denotes the convex cone
\[
	\calK_N := \biggl\{ \x \in\R^N\,:\, x_{i}\le x_{i+1}\quad\text{for all\, $i=0,\ldots,N-1$}\biggr\}\,.
\]
For $\x\in \intK_N$, the Gate\'aux derivative of $\calF_h$ in $\x$ along $\eta\in\R^N$ reads
\[
	D\calF_h(\x)[\eta] = \sum_{i=0}^{N-1} \eta_i \left(  V'(\x_i) + \sum_{j\ne i} h W'(\x_i-\x_j)\right) =: \langle \eta,\sfF^h(\x)\rangle\,.%\qquad\text{for any $\eta\in \R^N$.}
\]
In particular, if $\x=\x(t)\in \intK_N$ satisfies \eqref{eq:particle_complete} with $\f = \sfF^h(\x)$ for all $t\ge 0$, we then find
\begin{align}\label{eq:discrete-chain-rule}
	\begin{aligned}
	\frac{d}{dt}\calF_h(\x) &= \langle \dot\x,\sfF^h(\x)\rangle = -\sum_{i=0}^{N-1} \bigl[\beta(\rho_i^h)\f_i^- + \beta(\rho_{i-1}^h)\f_i^+ \bigr] \f_i
	= -\calD_h(\x(t))\,,
	\end{aligned}
\end{align}
where $\calD_h$ is the {\em discrete dissipation functional} given by
\begin{align*}
    \calD_h(\x) := \sum_{i=0}^{N-1} \Bigl[\beta(\rho_i^h) (\f_i^-)^2 + \beta(\rho_{i-1}^h)(\f_i^+)^2\Bigr]\,,
\end{align*}
with the convention that $\rho_{-1}^h = 0$.

\subsubsection*{Dual dissipation potential}

We now postulate a dual dissipation potential $\calR_h^*$ defined by
\begin{align}\label{eq:discrete-dual-dissipation}
	\calK_N\times\R^N\ni (\x,\zeta)\mapsto \calR_h^*(\x,\zeta) := \frac{1}{2}\sum_{i=0}^N \beta(\rho_{i-1}^h)(\zeta_i^-)^2 + \beta(\rho_i^h)(\zeta_i^+)^2\in [0,\infty)\,.
\end{align}
Then for all $\eta\in\R^N$,
\[
	D_2\calR_h^*(\x,\zeta)[\eta] %\sum_{i} \beta(\rho_{i-1}^h)\zeta_i^- h + \beta(\rho_i^h)\zeta_i^+ h = 
	= \sum_{i=0}^N \eta_i\bigl( \beta(\rho_{i-1}^h)\zeta_i^- + \beta(\rho_i^h)\zeta_i^+\bigr) = \langle \eta,\partial_2\calR_h^*(\x,\zeta)\rangle\,.
\]
Recalling that $\f = \sfF_h(\x)$, we compute
\[
	(\partial_2\calR_h^*(\x,-\sfF^h(\x)))_i = \beta(\rho_{i-1}^h)(-\f_i)^- + \beta(\rho_i^h)(-\f_i)^+ = -\beta(\rho_{i-1}^h)\f_i^+ -  \beta(\rho_i^h)\f_i^-= \dot\x_i\,,
\]
i.e.\ the pair $(\calF_h,\calR_h^*)$ brings \eqref{eq:particle_complete} into the gradient form \eqref{eq:GGF-discrete}.

\subsubsection*{Discrete Energy-Dissipation balance}

Let $(\x,\j) \in\calC([0,T];\intK_N)\times L^1((0,T);\R^N)$ be a pair satisfying the discrete continuity equation
\begin{align}\label{eq:cont-eq-discrete}
	\dot \x(t) = \j(t)\qquad\text{for almost every $t\in(0,T)$}\,. \tag{{CE}$_h$}
\end{align}
From equality $\dot\x = \partial_2\calR_h^*(\x,-\sfF_h(\x))$, we obtain the flux-force relation
\begin{align}\label{eq:discrete-force-flux}\tag{{FF}$_h$}
		\j =  \partial_2\calR_h^*(\x,-\sfF_h(\x))\quad\Longleftrightarrow\quad \calR_h(\x,\j) + \calR_h^*(\x,-\sfF^h(\x)) = \langle -\sfF^h(\x),\j\rangle\,,
\end{align}
where the {\em dissipation potential} $\calR_h$ is the Legendre dual of $\zeta\mapsto\calR_h^*(\x,\zeta)$ given by
\begin{align}\label{eq:discrete-dissipation-potential}
	\calR_h(\x,\j) = \frac{1}{2}\sum_i \left[ \beta(\rho_{i-1}^h)\left(\frac{\j_i^-}{\beta(\rho_{i-1}^h)}\right)^2 + \beta(\rho_i^h)\left(\frac{\j_i^+}{\beta(\rho_i^h)}\right)^2 \right].
\end{align}
From the variational force-flux relation \eqref{eq:discrete-force-flux} and the chain rule \eqref{eq:discrete-chain-rule}, we then obtain the discrete Energy-Dissipation balance 
\begin{align}\label{eq:discrete-EDB}\tag{{EDB}$_h$}
    \int_s^t \calR_h(\x(r),\j(r)) + \calR_h^*(\x(r),-\sfF^h(\x(r)))\,dr = \calF_h(\x(s)) - \calF_h(\x(t))
\end{align}
for any interval $(s,t)\subset[0,T]$.

% We denote by $\calA_{\calR_h}(0,T)$ the family of curves $\x\in \calC([0,T];\intK_N)$ that have the following properties:
% \begin{enumerate}
%     \item there is a flux $\j\in L^1((0,T);\R^N)$ such that $(\x,\j)$ satisfies \eqref{eq:cont-eq-discrete};
%     \item the flux $\j$ has finite action, i.e.
%     \[
% 	    \int_0^T \calR_h(\x(t),\j(t))\,dt <\infty\,.
%     \]
% \end{enumerate}
% Curves $\x\in \calA_{\calR_h}(0,T)$ are called curves with finite $\calR_h$-action.
%\begin{figure}
 %   \centering
 %   \includegraphics{all_1.eps}
 %   \caption{Caption}
 %   \label{fig:my_label}
%\end{figure}

\subsection{DPA as a numerical scheme}\label{sec:num} As highlighted in previous works, see \cite{diFFRa2019,FRa2018} and references therein, beyond the motivation of introducing a DPA as an analytical tool, there is also its fruitfully use as a numerical scheme for the continuous (limit) density counterpart solution to \eqref{eq:main}. 

More precisely, given an initial datum $\bar{\rho}$, we built a set of well-ordered initial particles $\{\bar{\x}_i\}_{i=0,\ldots,N}$ according to \eqref{eq:dscr_IC_un}, and we solve the corresponding ODE system \eqref{eq:particle_complete}. We then reconstruct in each \emph{cell} $K_i$ a density $\rho_i^h$ according to \eqref{eq:disc_density}---the following section is devoted to the rigorous validation on this approach. Before entering into the details, we want to show a collection of numerical results that underline some key aspects in the arguments used, such as the evolution of the $BV$-norm in contrast to the $H^1$-norm, and the discrete energy-dissipation balance \eqref{eq:discrete-EDB}. 

In each of the examples below, we will plot the evolution in time of the discrete density  \eqref{eq:disc_density}, or more precisely the piecewise constant interpolation \eqref{eq:piec_dens} defined in Section \ref{sec:Continuous}, the evolution in time of the $BV$-norm compared with the $H^1$-norm, and the discrete energy-dissipation identity. In all the examples we consider as initial condition
\[
\bar{\rho}(x)=\frac{3}{4}(1-x^2)_{+},
\]
and the discrete set composed of $N=200$ particles. In Figure \ref{fig:test_1}, we consider an attractive Newtonian interaction potential $W(x)=|x|$ and $V(x)=0$, while in Figures \ref{fig:test_2} and \ref{fig:test_3}, we show the evolution under the influence of a repulsive Newtonian interaction potential $W(x)=-|x|$ in combination with a confining external potential $V(x)=x^2/2$ and $V(x)=0$ respectively.

Figure~\ref{fig:test_1} clearly indicates that the $BV$-norm behaves better than the $H^1$-norm. In fact, the $H^1$-norm of the approximation in this scenario is expected to explode as $t\to \infty$, since its stationary solution is (up to a constant multiple) an indicator function on the interval $(-1/2,1/2)$. Furthermore, all figures verify the validity of the discrete energy-dissipation balance \eqref{eq:discrete-EDB}, as suggested analytically, and is something we observe all across the levels of discretization.

\begin{figure}[htbp]
  \centering
    \includegraphics[width=5cm,height=4cm]{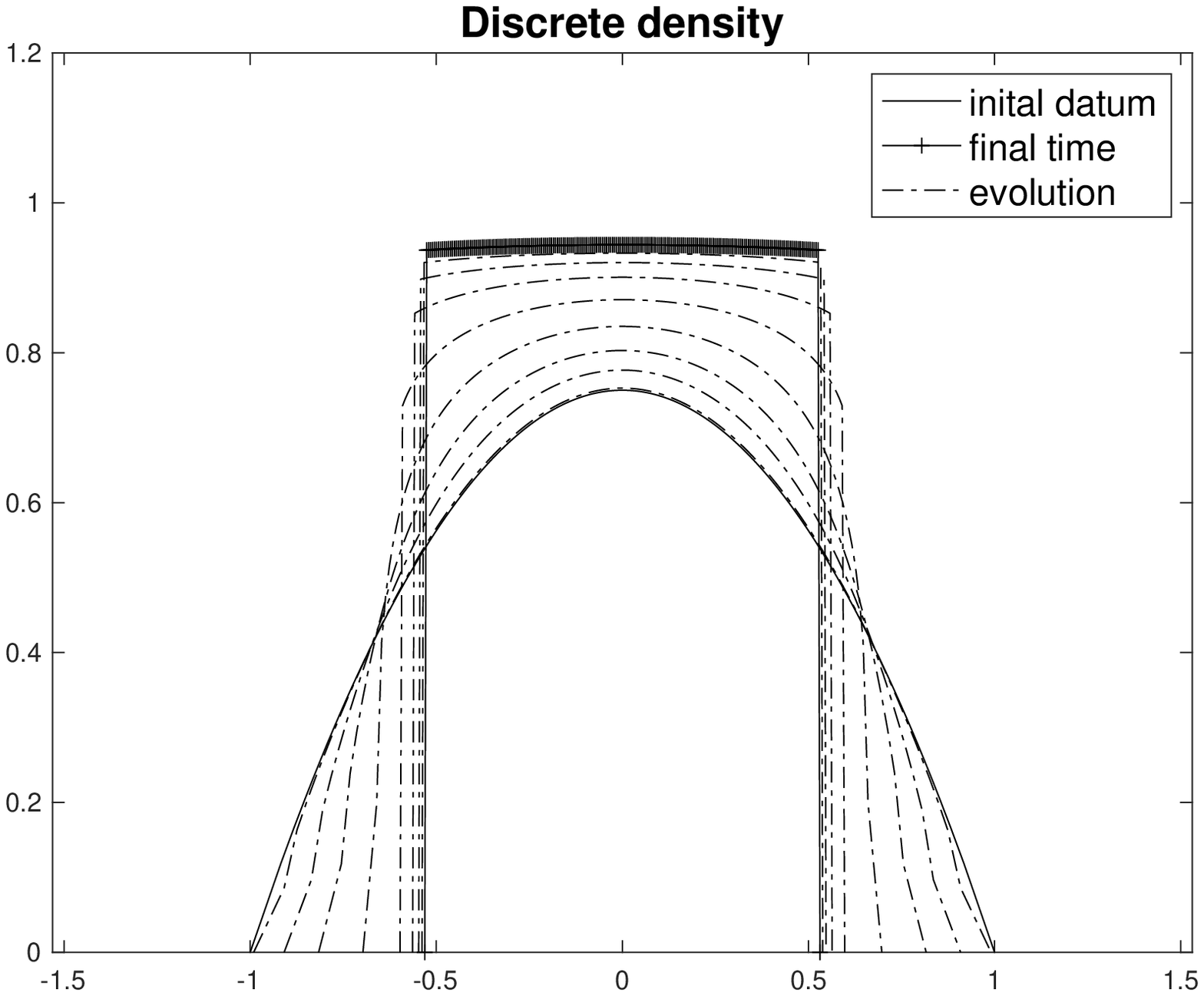}
    \includegraphics[width=5cm,height=4cm]{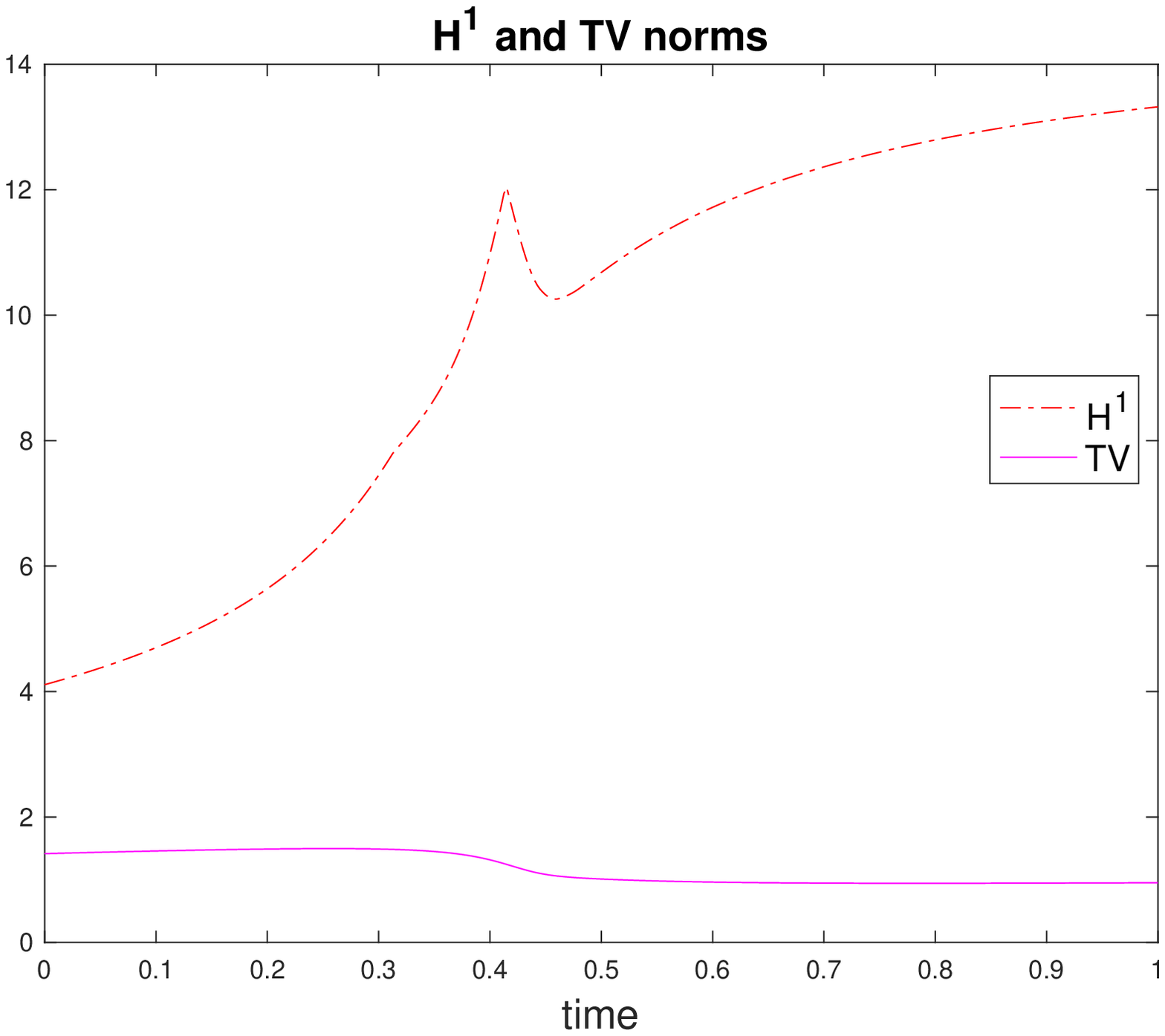} 
    \includegraphics[width=5cm,height=4cm]{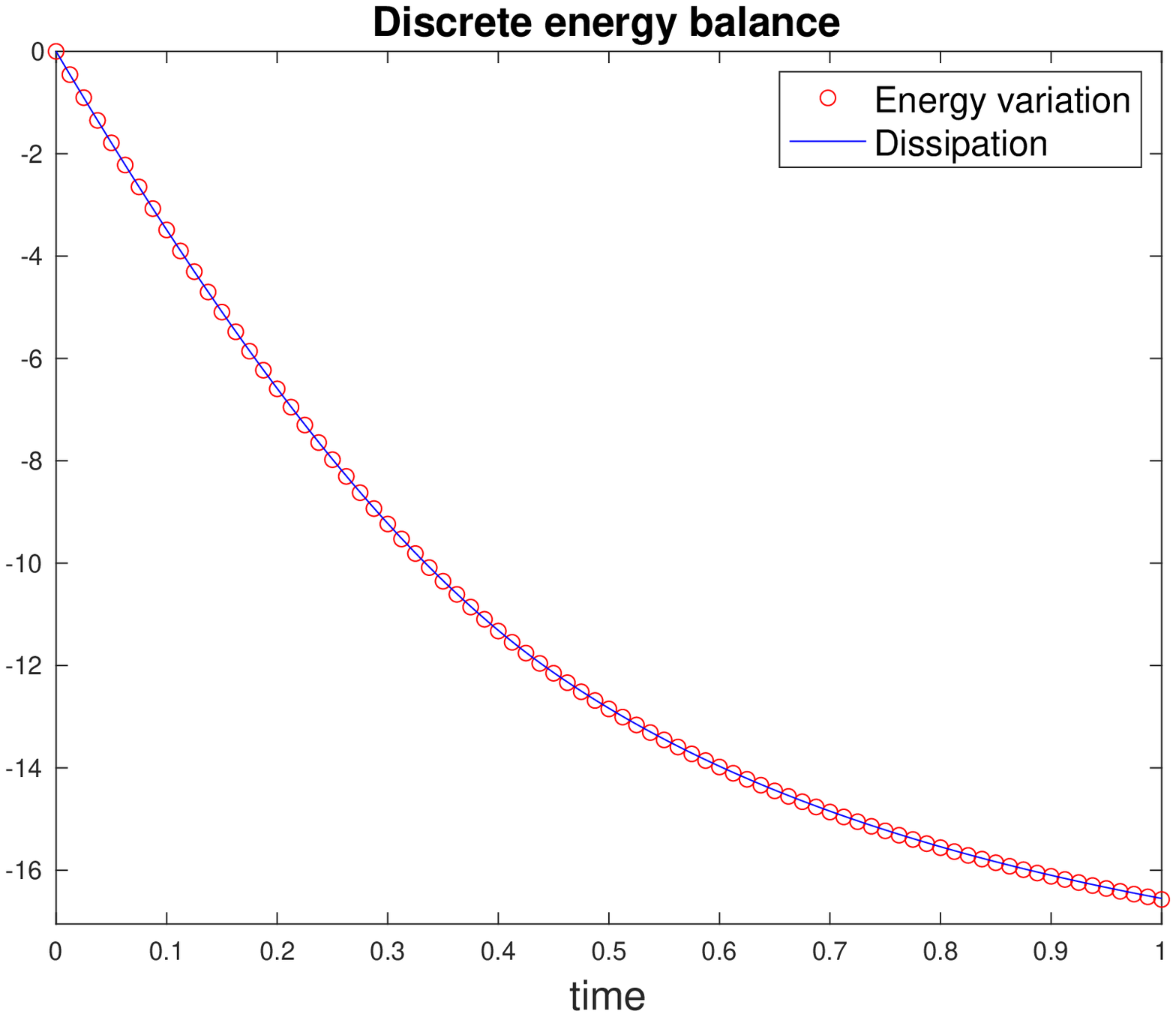}
\caption{Under the influence of an attractive Newtonian interaction potential $W(x)=|x|$, we plot: The evolution of the reconstructed discrete density from \eqref{eq:particle_complete}  (left),  evolution of the $BV$ and $H^1$ norms (center) and the discrete energy-dissipation identitiy (right). This example highlights the convenience of handling the $BV$ norm instead of the $H^1$.}
\label{fig:test_1}
\end{figure}
\begin{figure}[htbp]
  \centering
    \includegraphics[width=5cm,height=4cm]{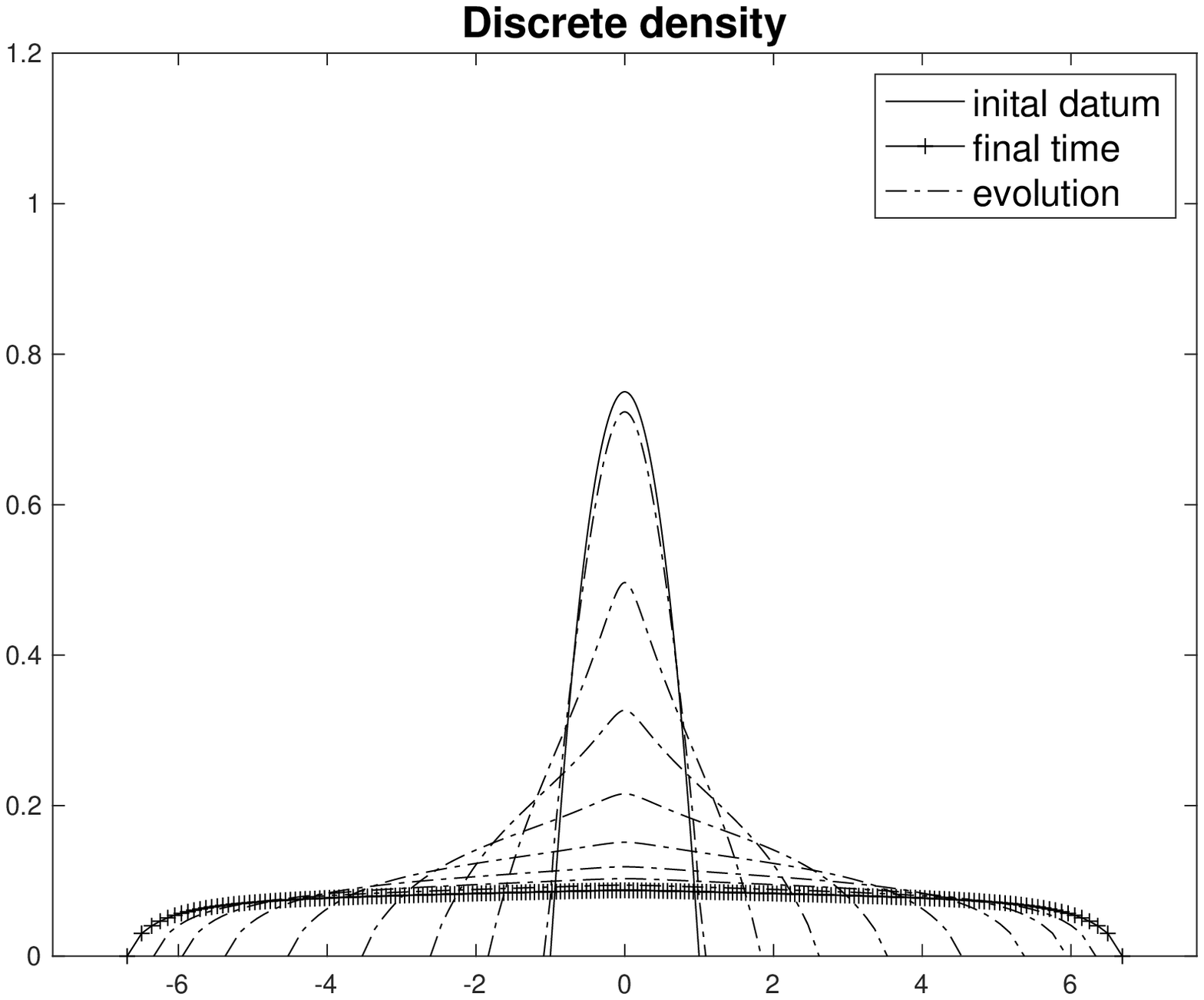}
    \includegraphics[width=5cm,height=4cm]{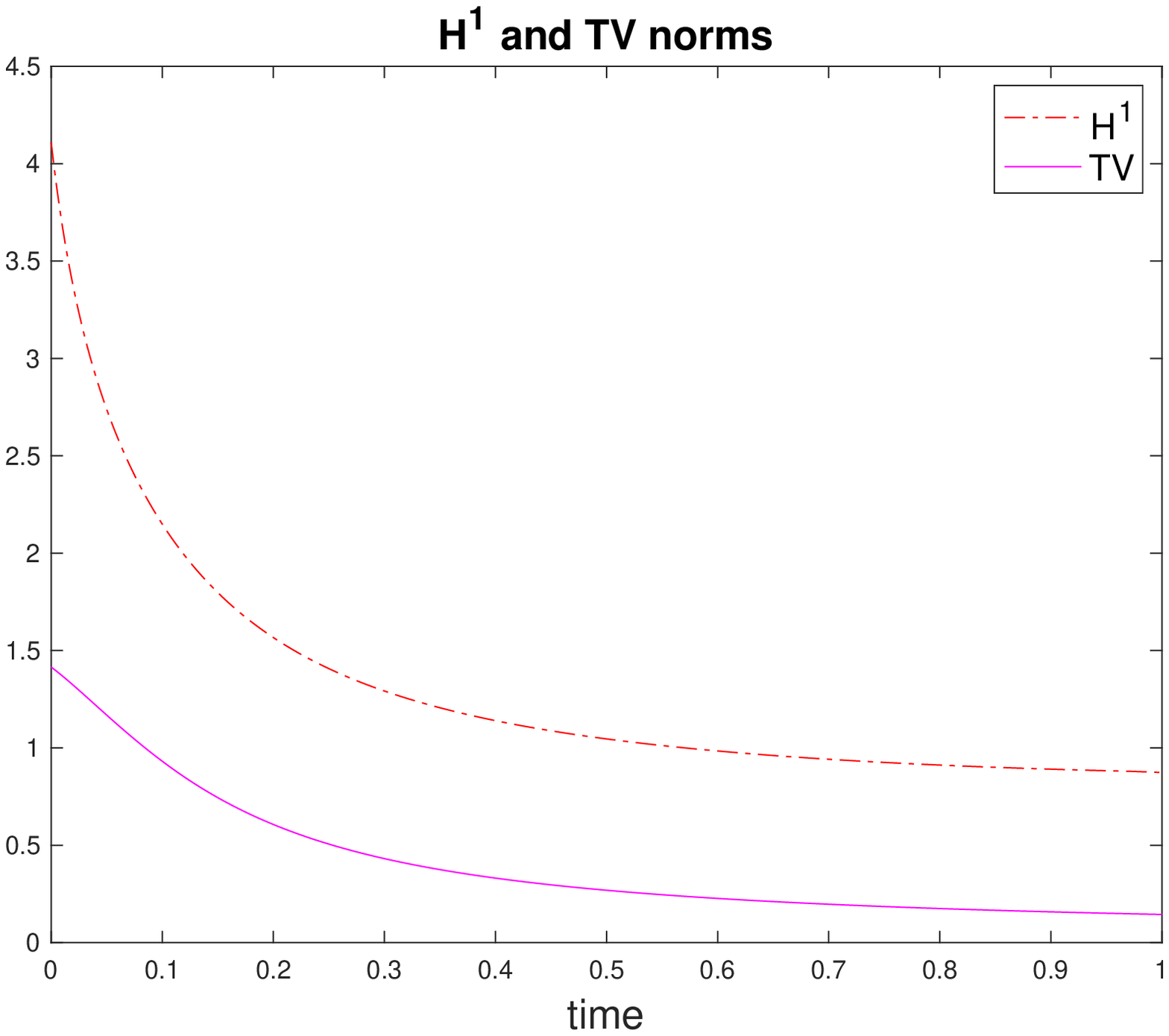}
    \includegraphics[width=5cm,height=4cm]{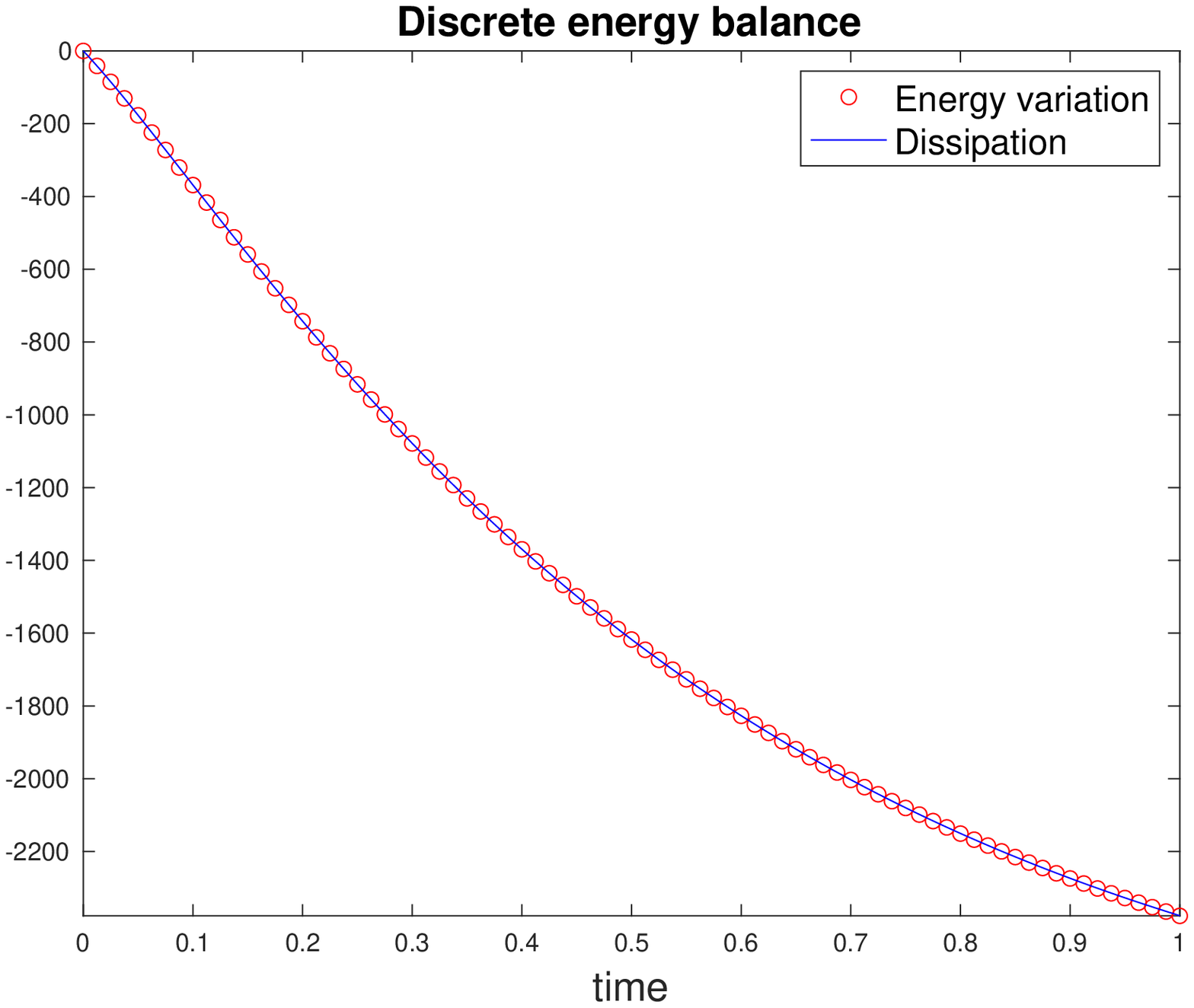}
\caption{Under the influence of a repulsive Newtonian interaction potential $W(x)=-|x|$ and a confining external potential $V(x)=x^2/2$, we plot:  evolution of the reconstructed discrete density from \eqref{eq:particle_complete}  (left),  evolution of the $BV$ and $H^1$ norms (center) and the discrete energy-dissipation identitiy (right).}
\label{fig:test_2}
\end{figure}
\begin{figure}[htbp]
  \centering
    \includegraphics[width=5cm,height=4cm]{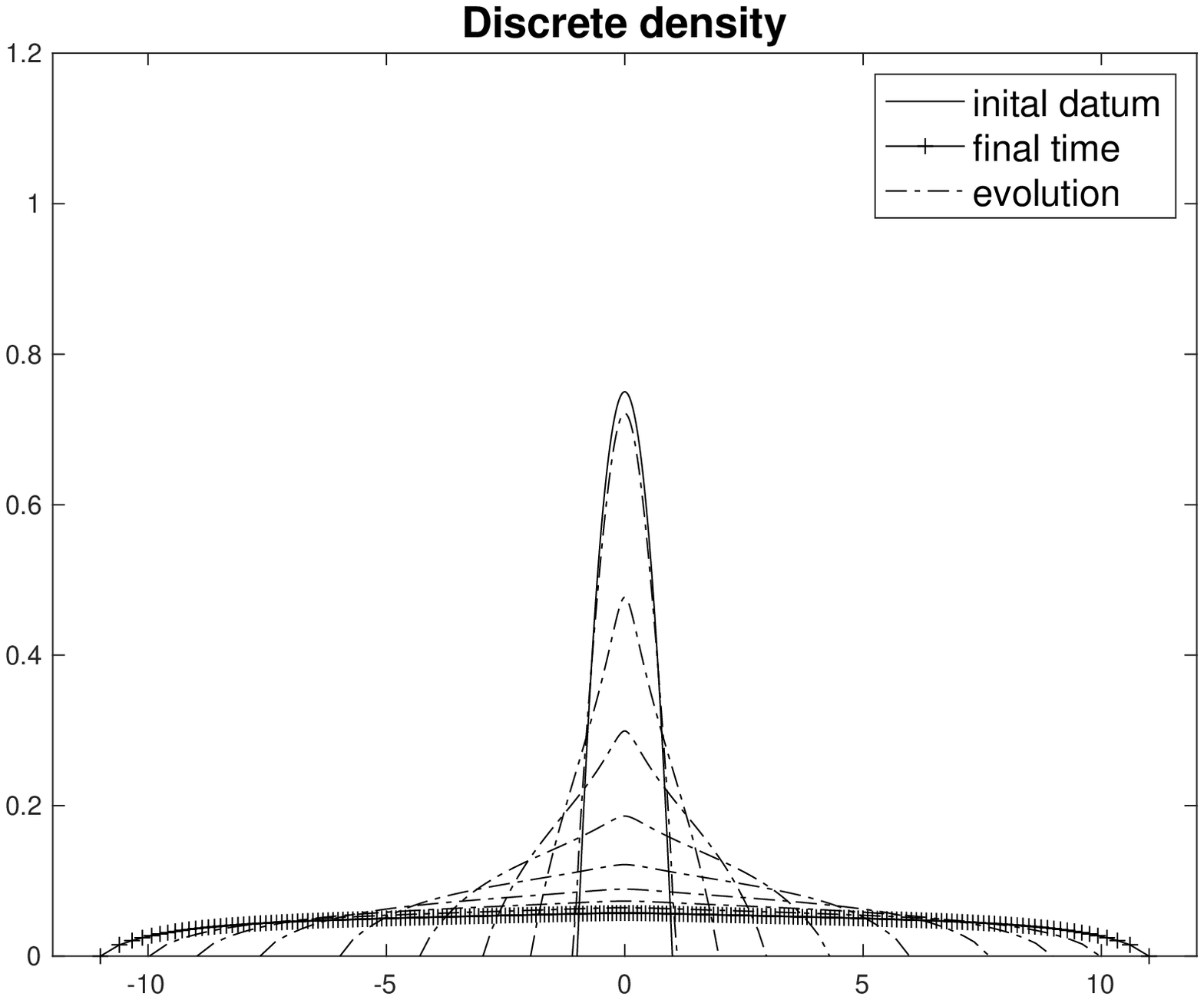}
    \includegraphics[width=5cm,height=4cm]{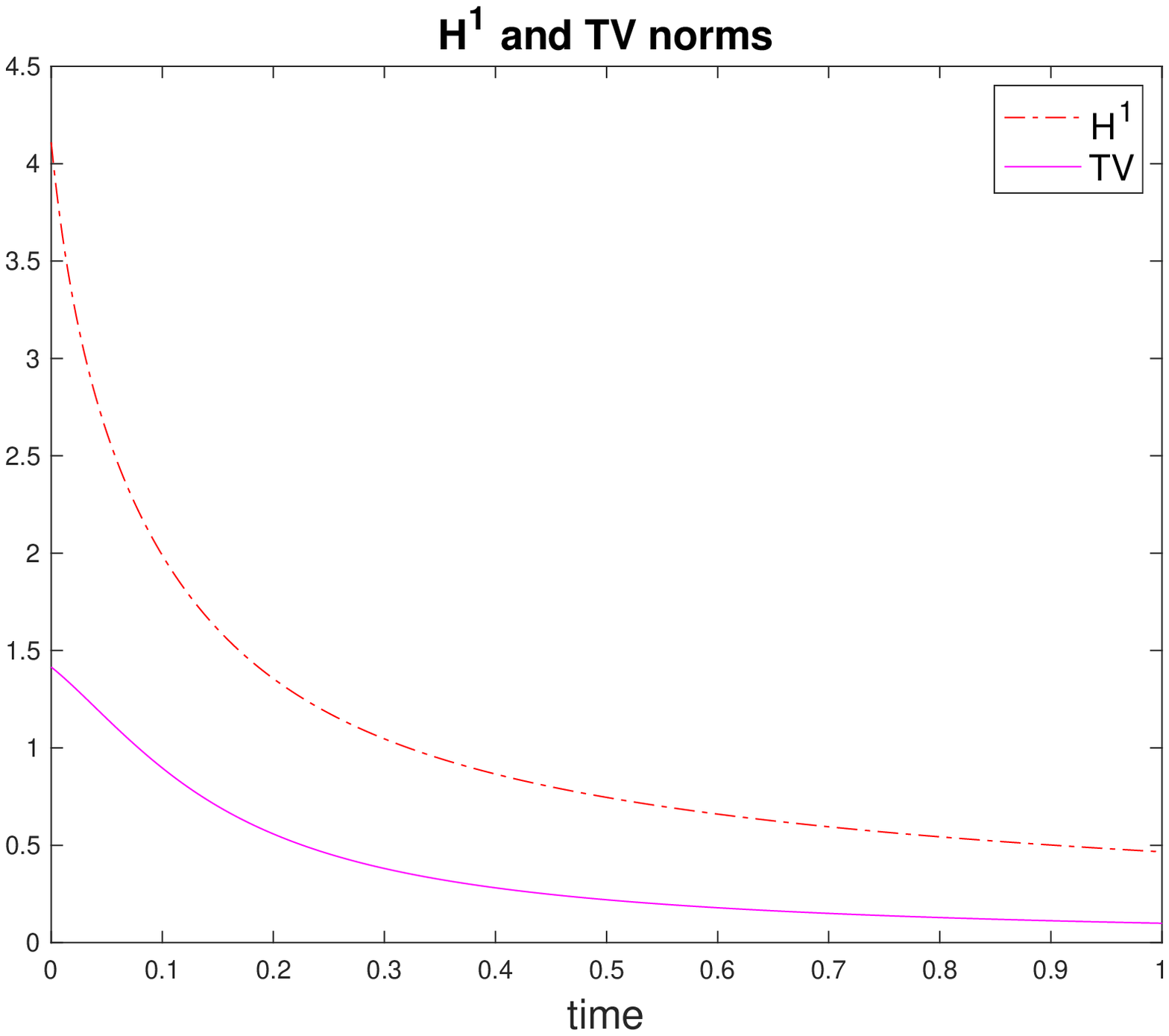}
    \includegraphics[width=5cm,height=4cm]{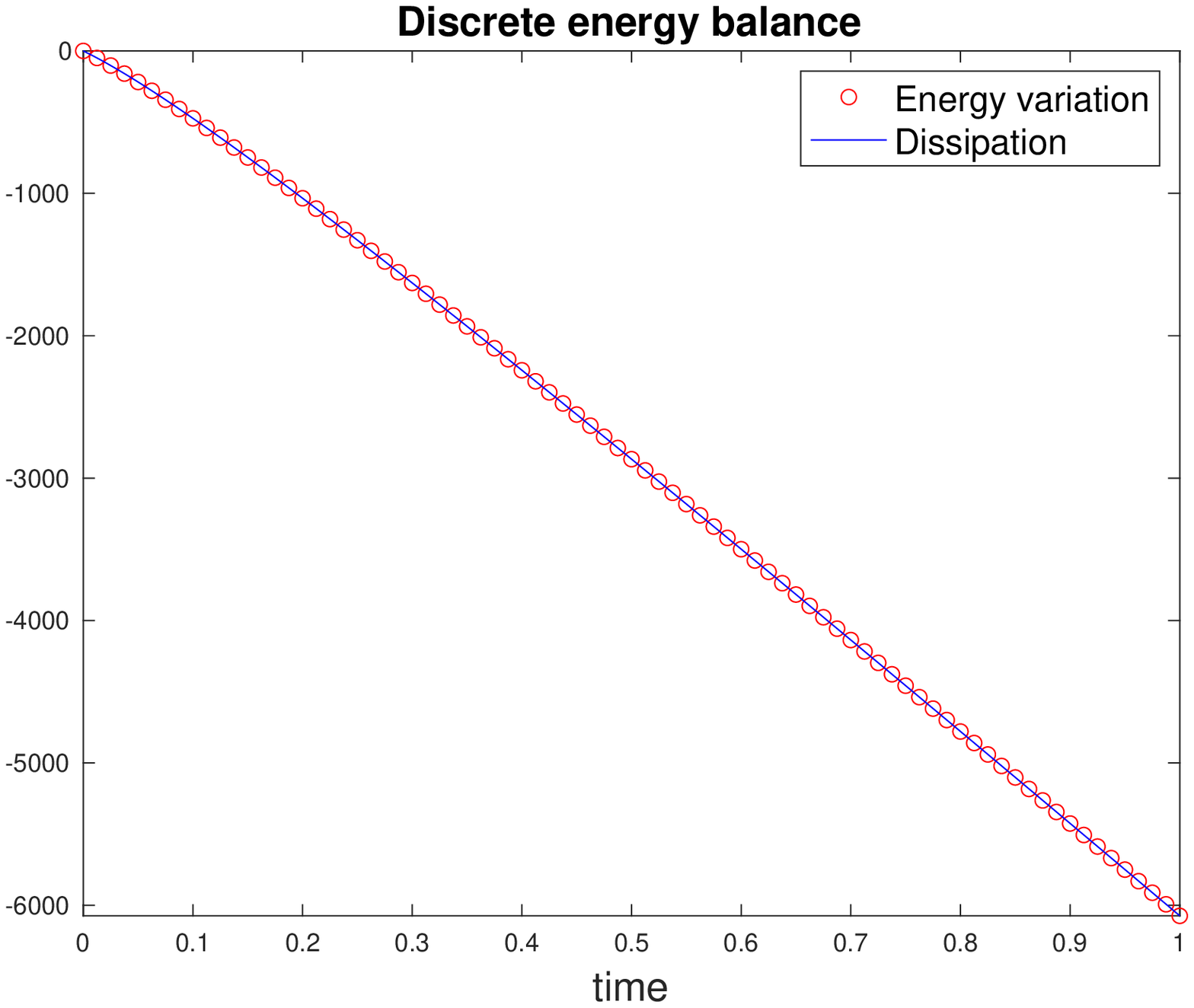}
\caption{Under the influence of a repulsive Newtonian interaction potential $W(x)=-|x|$, we plot:  evolution of the reconstructed discrete density from \eqref{eq:particle_complete}  (left),  evolution of the $BV$ and $H^1$ norms (center) and the discrete energy-dissipation identitiy (right).}
\label{fig:test_3}
\end{figure}

\section{Continuous reconstruction and compactness} \label{sec:Continuous}

Let $\x\in C([0,T];\R^{N+1})$ be a solution of the deterministic particle approximation \eqref{eq:particle_complete}. We define the piecewise constant density reconstruction as
\begin{equation}\label{eq:piec_dens}
        \hat\rho_t^h(x) := \sum_{i=0}^{N-1} \rho_i^h(t) \bbmI_{K_i(t)}(x)\,,\qquad \rho_i^h = \frac{h}{|K_i|},\qquad |K_i| = \x_{i+1} - \x_i\,.
\end{equation}
We can associate to \eqref{eq:piec_dens} the following flux reconstruction 
\begin{equation}\label{eq:piec_flux}
    \hat \jmath_t^h(x) := \sum_{i=0}^{N-1} \rho_i^h(t)\, u_i^h(t,x) \bbmI_{K_i(t)}(x)\,,
\end{equation}
where the velocity field $u_i^h=u_i^h(t,x)$ is defined by
\begin{equation}\label{eq:piec_vel}
u_i^h(t,x) := \frac{\x_{i+1}(t)-x}{|K_i|(t)}\dot \x_i(t) + \frac{x-\x_i(t)}{|K_i|(t)}\dot \x_{i+1}(t)\qquad \text{for $x\in K_i$,\; $i=0,\ldots,N-1$\,.} %\\,
%&= \frac{\sfX_{i+1}-\sfX_i + \sfX_i-x}{|K_i|}\dot \sfX_i + \frac{x-\sfX_{i+1}+\sfX_{i+1}-\sfX_i}{|K_i|}\dot \sfX_{i+1} = \dot\sfX_i + \dot\sfX_{i+1} - \frac{x-\sfX_i}{|K_i|}\dot \sfX_i - \frac{\sfX_{i+1}-x}{|K_i|}\dot \sfX_{i+1}
\end{equation}

Observe that this specific reconstruction is chosen such that the pair $(\hat\rho^h,\hat\jmath^h)$ satisfies the continuity equation mentioned in the introduction.

\begin{lemma}\label{lem:reconstruction-continuity}
	Let $h\in(0,1)$ and $\{\x^h\}$ be a family of solutions to \eqref{eq:particle_complete} with its corresponding reconstructed family of pairs $\{(\hat \rho^h,\hat\jmath^h)\}_{h\in(0,1)}$. The following hold true:
	\begin{enumerate}[label=(\roman*)]
	    \item For each $h\in(0,1)$, the pair $(\hat \rho^h,\hat\jmath^h)\in \mathcal{CE}(0,T)$ in the sense of Definition~\ref{def:cont-eq};
	    \item The family $\{t\mapsto |\hat\jmath_t^h|(\R)\}_{h\in(0,1)}$ is uniformly integrable.
	\end{enumerate}
\end{lemma}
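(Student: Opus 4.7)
The plan is to handle the two assertions separately. For (i), observe that the velocity reconstruction \eqref{eq:piec_vel} is engineered precisely so that the continuity equation holds by direct computation, with the boundary values of $u_i^h$ matching $\dot\x_i$ and $\dot\x_{i+1}$ at the endpoints of $K_i$. Fixing $\varphi\in\Lip_b(\R)$ and writing $\Phi_i(t):=\int_{K_i(t)}\varphi(x)\,dx$, I would differentiate
\[
\langle\varphi,\hat\rho^h_t\rangle = \sum_{i=0}^{N-1}\rho_i^h(t)\,\Phi_i(t)
\]
using $\rho_i^h|K_i| = h$ (hence $\dot\rho_i^h = -\rho_i^h(\dot\x_{i+1}-\dot\x_i)/|K_i|$) together with Leibniz ($\dot\Phi_i = \varphi(\x_{i+1})\dot\x_{i+1}-\varphi(\x_i)\dot\x_i$). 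A cell-wise integration by parts of $\int_{K_i}\partial_x\varphi\,\rho_i^h u_i^h\,dx$, exploiting that $\partial_x u_i^h = (\dot\x_{i+1}-\dot\x_i)/|K_i|$ is constant on $K_i$ and that $u_i^h$ takes the boundary values $\dot\x_i, \dot\x_{i+1}$, produces the identical expression. Integrating the resulting pointwise identity over $(s,t)$ yields \eqref{eq:cont-eq} in the sense of Definition~\ref{def:cont-eq}.

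For (ii), I plan to prove the stronger statement $\sup_{h\in(0,1)}\|\,|\hat\jmath^h_{\cdot}|(\R)\,\|_{L^2(0,T)} < \infty$, from which uniform integrability follows immediately via H\"older's inequality $\int_E|\hat\jmath^h_t|(\R)\,dt \le |E|^{1/2}\|\,|\hat\jmath^h_{\cdot}|(\R)\,\|_{L^2(0,T)}$. Since $\int\hat\rho^h_t\,dx = m$ for all $t$, Cauchy--Schwarz gives
\[
|\hat\jmath^h_t|(\R) \le \int_\R \hat\rho^h_t\,|u^h_t|\,dx \le m^{1/2}\Bigl(\int_\R \hat\rho^h_t\,|u^h_t|^2\,dx\Bigr)^{1/2}.
\]
On each cell, a direct quadrature of the affine $|u_i^h|^2$ yields $\int_{K_i}\rho_i^h|u_i^h|^2\,dx \le \tfrac{h}{2}(\dot\x_i^2+\dot\x_{i+1}^2)$, while the identity $\dot\x_i^2 = \beta(\rho_i^h)^2(\f_i^-)^2+\beta(\rho_{i-1}^h)^2(\f_i^+)^2$, valid because $\f_i^+\f_i^- = 0$, together with $\beta\le\beta_{max}$, gives $\sum_i \dot\x_i^2 \le \beta_{max}\,\calD_h(\x)$, where $\calD_h$ is the discrete dissipation from Section~\ref{sec:discrete-GF}. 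Chaining these estimates yields
\[
|\hat\jmath^h_t|(\R)^2 \le 2m\,\beta_{max}\,h\,\calD_h(\x(t)),
\]
so integrating in time and invoking the discrete Energy-Dissipation balance \eqref{eq:discrete-EDB} (in the form $\int_0^T\calD_h\,dt = \calF_h(\bar\x) - \calF_h(\x(T))$) reduces matters to showing that $h\calF_h$ is uniformly bounded along the trajectory.

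For this last bookkeeping step, I would invoke Lemma~\ref{lem:extend-DPA} to confine $\{\x_i(t)\}_{i,t}$ to a common bounded set on which $V$ and $W$ are bounded; a direct count in \eqref{discrete_ener_fun} then gives $|\calF_h(\x(t))| \le C\,N + C'\,N^2\,h \le C''/h$, whence $h|\calF_h(\x(t))| \le C''$ uniformly in $h\in(0,1)$ and $t\in[0,T]$. The main obstacle I anticipate is precisely this scaling bookkeeping: $\calF_h$ itself grows like $1/h$, and one must verify that the factor of $h$ extracted from the Cauchy--Schwarz estimate compensates exactly. In contrast, (i) is a mechanical computation whose only subtlety is the matching trace condition $u_i^h(\x_{i+1}^-) = \dot\x_{i+1} = u_{i+1}^h(\x_{i+1}^+)$, which renders the reconstructed flux $\hat\jmath^h$ continuous across interfaces despite $\hat\rho^h$ having jumps there.
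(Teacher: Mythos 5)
Your part (i) is essentially the paper's computation, just organized in the opposite direction: the paper integrates $\partial_x\varphi$ against $\hat\jmath_t^h$ cell by cell and recognizes the result as $\frac{d}{dt}\sum_i h\intbar_{K_i(t)}\varphi\,dx$, whereas you differentiate $\sum_i\rho_i^h\Phi_i$ and integrate by parts on each cell; the two identities are the same. (Two small caveats: you should still record the trivial verification of the remaining requirements of Definition~\ref{def:cont-eq} --- weak-$*$ continuity of $t\mapsto\hat\rho_t^h$ and measurability of $t\mapsto\hat\jmath_t^h$, which follow from continuity of $\x_i$ and measurability of $\dot\x_i$ --- and your closing remark that the matching traces of $u_i^h$ make $\hat\jmath^h$ continuous across interfaces is not correct: only $u^h$ is continuous there, while $\hat\jmath^h=\hat\rho^h u^h$ jumps wherever $\hat\rho^h$ does; fortunately nothing in your argument uses this.)

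For part (ii) you take a genuinely different route. The paper simply invokes Lemma~\ref{lem:extend-DPA} to get $\sup_{h}\sup_{t}\|\dot\x(t)\|_{\ell^\infty}<\infty$, hence $\|u_i^h\|_{L^\infty}\le c_u$ uniformly, and then $\int_A|\hat\jmath_t^h|(\R)\,dt\le c_u m|A|$ --- an $L^\infty$-in-time bound from which uniform integrability is immediate. You instead prove a uniform $L^2(0,T)$ bound by Cauchy--Schwarz, the exact quadrature of the affine $u_i^h$, the algebraic identity $\dot\x_i^2=\beta(\rho_i^h)^2(\f_i^-)^2+\beta(\rho_{i-1}^h)^2(\f_i^+)^2$, the discrete Energy--Dissipation balance, and the scaling $h|\calF_h|\lesssim 1$ on the confined trajectory. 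This is correct (one index wrinkle: the paper's $\calD_h$ as written sums only to $i=N-1$ and so misses the $\dot\x_N$ contribution; you should bound $\sum_{i=0}^N\dot\x_i^2$ by $2\beta_{max}\calR_h^*(\x,-\sfF^h(\x))$, whose sum runs to $i=N$, which changes nothing else). The comparison: the paper's argument is shorter and yields the stronger pointwise-in-time bound, but relies on the pointwise force bound of Lemma~\ref{lem:extend-DPA}; your energetic argument only needs the time-integrated dissipation plus the $O(1/h)$ bound on $\calF_h$, which mirrors the continuum reasoning and would be more robust in settings where pointwise velocity bounds are unavailable --- although here you still need Lemma~\ref{lem:extend-DPA} for the confinement that bounds $V$ and $W$ along the trajectory, so in this paper nothing extra is actually gained.
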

\begin{proof}
    The weak-$*$ continuity of the curve $[0,T]\ni t\mapsto\hat\rho_t^h\in\calM^+(\R)$ follows from the continuity of $t\mapsto \x_i(t)$ for every $i=1,\ldots,N$, while the measurability of the family $\{\hat\jmath_t^h\}_{t\in[0,T]}\subset \calM(\R)$ follows from the measurability of $t\mapsto \rho_i^h(t)$ and $t\mapsto \dot\x_i(t)$ for every $i=1,\ldots,N$. 
    
    From Lemma~\ref{lem:extend-DPA}, we easily deduce that
    \[
        \sup_{h\in(0,1)}\sup_{t\in[0,T]}\|\dot \x(t)\|_{\ell^\infty} <\infty\,,
    \]
    and hence also
    \[
        c_u:=\sup_{h\in(0,1)}\sup_{t\in[0,T]}\max_{i=0,\ldots,N-1} \|u_i^h(t,\cdot)\|_{L^\infty(K_i)}<\infty\,.
    \]
    By definition, we then obtain for any Borel set $A\subset[0,T]$
    \[
        \sup_{h\in(0,1)}\int_A |\hat\jmath_t^h|(\R)\,dt \le \sup_{h\in(0,1)}\sum_{i=0}^{N-1} h \int_A \|u_i^h(t,\cdot)\|_{L^\infty(K_i)}\,dt \le c_u m |A|\,,
    \]
which proves the uniform integrability of the family $\{t\mapsto|\hat\jmath_t^h|(\R)\}_{h\in(0,1)}$. In particular, $\int_0^T |\hat\jmath_t^h|(\R)\,dt <\infty$ uniformly for all $h\in(0,1)$.

	Furthermore, we see that for any $\varphi\in\Lip_b(\R)$, it holds
	\begin{align*}
		\langle \partial_x\varphi,\hat\jmath_t^h\rangle &= \sum_{i=0}^{N-1}\rho_i^h(t)\left(\frac{\dot{\x}_i}{|K_i|(t)}\int_{K_i(t)}\varphi'(x)(\x_{i+1}(t)-x)dx+\frac{\dot{\x}_{i+1}}{|K_i|(t)}\int_{K_i(t)}\varphi'(x)(x-\x_{i}(t))dx\right) \\
		&= \sum_{i=0}^{N-1}  \frac{d}{dt}\left( h\intbar_{K_i(t)}\varphi(x)\,dx\right) = \frac{d}{dt}\int_\R \varphi(x)\,\hat\rho_t^h(x)\,dx\,.
	\end{align*}
	Integrating over any interval $(s,t)$ then shows that the pair $(\hat\rho^h,\hat\jmath^h)$ satisfies \eqref{eq:cont-eq}.
\end{proof}
%In what follows, we make the following assumption on the family of discrete pairs $((\x^h,\j^h))_{N\in\N}$.
%\begin{gather}\label{ass:1}
%    \begin{gathered}
%	c_\rho:=\sup_{h>0} \|\rho^h\|_{L^\infty(\ell^\infty)} <\infty,\qquad c_{\j}:=\sup_{h>0}\|\j^h\|_{L^\infty(\ell^\infty)} <\infty\,,\\
%	\exists\alpha>0:\quad\rho_i^h(t) \ge \alpha\qquad\forall\,i=1,\ldots,N,\;t\in[0,T],\;N\in\N\,.
%	\end{gathered} \tag{{\bf A}}
%\end{gather}

%\begin{remark}
 %   The first and last assumption on $\rho^h$ is equivalent to 
  %  \[
   %      c_\rho^{-1}h \le |K_i^N(t)|\le \alpha^{-1} h\qquad \forall\,i=1,\ldots,N,\;t\in[0,T],\;N\in\N\,.
    %\]
%\end{remark}

% \begin{align*}
%     \langle \varphi,\hat\jmath^h\rangle &= \sum_{i=0}^{N-1} \rho_i^h\, \int_{K_i}u_i^h(x)\, \varphi(x)\,dx \\
%     &= h\sum_{i=0}^{N-1} \intbar_{K_i} \left(\frac{\x_{i+1}-x}{|K_i|}\j_i + \frac{x-\x_i}{|K_i|}\j_{i+1}\right) \varphi(x)\,dx \\
%     &= h\sum_{i=0}^{N-1} \j_i\intbar_{K_i} \frac{\x_{i+1}-x}{|K_i|} \varphi(x)\,dx + h\sum_{i=1}^{N} \j_i\intbar_{K_{i-1}} \frac{x-\x_{i-1}}{|K_{i-1}|} \varphi(x)\,dx \\
%     &= h\sum_{i=1}^{N-1} \j_i \left\{ \intbar_{K_i} \frac{\x_{i+1}-x}{|K_i|} \varphi(x)\,dx + \intbar_{K_{i-1}} \frac{x-\x_{i-1}}{|K_{i-1}|} \varphi(x)\,dx\right\}
% \end{align*}

Our main result of this section is the following theorem:

\begin{thm}\label{thm:compactness}
    Let the assumptions of Theorem~\ref{thm:existence-DPA} hold, and let $\{(\hat\rho^h,\hat\jmath^h)\}_{h\in(0,1)}\subset\mathcal{CE}(0,T)$ be the family of density-flux reconstruction associated to a family $\{\x^h\}_{h\in(0,1)}$ of solutions to \eqref{eq:particle_complete} given by Theorem~\ref{thm:existence-DPA}. Then
    %  be the family of discrete pairs $(\x^h,\j^h)$ satisfying \eqref{eq:cont-eq-discrete}. Assume $V$ under assumption (A-V) and $W$ under assumption (A-Wm). %and additionally
    %\[
     %   \sup\nolimits_{h>0} \TV[\bar\rho^h] <\infty\,,
    %\]
    %where $\TV[\rho]$ is the total variation of $\rho$. 
    % If $\bar{\rho}$ satisfies (In1), then 
    there exists a pair $(\rho,j)\in\mathcal{CE}(0,T)$ such that
    \[
   \left. \begin{aligned}
    \hat\rho^h \to \rho\quad &\text{in $L^1([0,T]\times\R)$\,,} \\
    \hat\rho_t^h\rightharpoonup^* \rho_t\quad &\text{weakly-$*$ in $\calM^+(\R)$\quad for every $t\in[0,T]$\,,\; and}\\
    \int_{\cdot}\, \hat\jmath_t^h\,dt \rightharpoonup^* \int_{\cdot}\,j_t\,dt\quad &\text{weakly-$*$ in $\calM([0,T]\times \R)$}
    \end{aligned}\quad \right\}\quad\text{as $h\to 0$\,.}
\]
    for a (not relabelled) subsequence of $\{(\hat\rho^h,\hat\jmath^h)\}_{h\in(0,1)}$.
\end{thm}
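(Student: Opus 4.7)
The plan is to extract strong $L^1$ convergence of the densities by combining three uniform estimates---an $L^\infty$ bound with uniformly compact support, a uniform BV bound in space, and a uniform-in-$h$ modulus of continuity in time---after which the flux convergence and the continuity equation for the limit follow from soft arguments.

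First I would collect the bounds already at hand. Theorem~\ref{thm:existence-DPA}(i) gives $\rho_i^h(t)\le M$ for all $i,t,h$, hence $\|\hat\rho^h\|_{L^\infty}\le M$; the support of $\hat\rho_t^h$ is contained in $[\x_0(t),\x_N(t)]$, which is uniformly bounded on $[0,T]$ by Theorem~\ref{thm:existence-DPA}(ii) together with Lemma~\ref{lem:extend-DPA}. In particular, $\{\hat\rho^h\}$ is uniformly bounded in every $L^p([0,T]\times\R)$, $p\in[1,\infty]$. For time regularity I would use the continuity equation of Lemma~\ref{lem:reconstruction-continuity}(i): for $\varphi\in\Lip_b(\R)$ and $0\le s<t\le T$,
\[
    |\langle\varphi,\hat\rho_t^h-\hat\rho_s^h\rangle| \le \|\varphi'\|_{L^\infty(\R)}\int_s^t |\hat\jmath_r^h|(\R)\,dr,
\]
and the uniform integrability of $\{t\mapsto|\hat\jmath_t^h|(\R)\}$ from Lemma~\ref{lem:reconstruction-continuity}(ii) produces a modulus of continuity for $t\mapsto\hat\rho_t^h$ in the bounded-Lipschitz dual metric that is independent of $h$.

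The hard part will be a uniform spatial BV bound. For each $t\in[0,T]$ the piecewise-constant structure gives
\[
    |D\hat\rho_t^h|(\R) = \rho_0^h(t)+\rho_{N-1}^h(t)+\sum_{i=1}^{N-1}|\rho_i^h(t)-\rho_{i-1}^h(t)|,
\]
and I would seek a differential inequality for this quantity using
\[
    \dot\rho_i^h = -\frac{(\rho_i^h)^2}{h}\bigl(\dot\x_{i+1}-\dot\x_i\bigr)
\]
together with the cell-to-cell Lipschitz estimate $|\f_{i+1}-\f_i|\le c_{\f}|K_i|$ from Lemma~\ref{lem:liploc_DPA}. A careful bookkeeping of the $(\cdot)^{\pm}$ decompositions together with the sign changes of $\f$ across cells---this is where the upwind character of \eqref{eq:particle_complete} is essential---should yield
\[
    \frac{d}{dt}|D\hat\rho_t^h|(\R) \le C_1\, |D\hat\rho_t^h|(\R) + C_2
\]
with constants $C_1,C_2$ independent of $h$. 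Combined with the construction in \eqref{eq:dscr_IC_un}, which gives $|D\hat\rho_0^h|(\R) \le C\,|D\bar\rho|(\R)$, a Gr\"onwall argument closes the uniform bound.

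With the BV bound in hand, Kolmogorov--Riesz (using the uniform support) yields relative $L^1(\R)$-compactness for each fixed $t$, while Arzel\`a--Ascoli applied to the equicontinuous curves $\{t\mapsto\hat\rho_t^h\}$ in $(\calM^+(\R),w^*)$---metrized on the set of measures with mass $m$ and uniformly bounded support---produces a subsequence with $\hat\rho_t^h\rightharpoonup^*\rho_t$ for every $t\in[0,T]$. The BV bound promotes this to strong $L^1(\R)$ convergence at each $t$, and dominated convergence (via $\|\hat\rho^h\|_{L^\infty}\le M$ and uniform compact support) upgrades it to $\hat\rho^h\to\rho$ in $L^1([0,T]\times\R)$. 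For the flux, the family $\{\hat\jmath_t^h\,dt\}\subset\calM([0,T]\times\R)$ is bounded in total variation, so Banach--Alaoglu yields a weak-$*$ limit; the fact that the time marginal is absolutely continuous (with density uniformly bounded in $L^1(0,T)$) allows disintegration into a measurable family $\{j_t\}$ with $\int_0^T|j_t|(\R)\,dt<\infty$. Passing to the limit in the weak formulation of \eqref{eq:cont-eq} against $\varphi\in\Lip_b(\R)$ finally gives $(\rho,j)\in\mathcal{CE}(0,T)$.
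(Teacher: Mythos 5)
Your overall architecture (uniform $L^\infty$ bound and compact support, uniform spatial $BV$ bound, Lipschitz-in-time control in a weak metric, then compactness for the densities and weak-$*$ plus disintegration for the flux) matches the paper's, which packages the compactness step into the generalized Aubin--Lions result of Proposition~\ref{thm:aubin} rather than your Kolmogorov--Riesz/Arzel\`a--Ascoli argument; that substitution, and your treatment of the flux and of the passage to the limit in the continuity equation, are fine. The genuine gap is in the step you leave as a sketch, namely the uniform $BV$ bound, which is the heart of the whole theorem. Writing, as the paper does, $\dot\rho_i^h = \frac{\rho_i^h}{|K_i|}A_i - \frac{1}{|K_i|}B_i$ with $A_i$ the differences of $\beta$-values (these are indeed sign-favorable against $s_i=\sign(\rho_i^h-\rho_{i-1}^h)-\sign(\rho_{i+1}^h-\rho_i^h)$ thanks to the upwind structure and the monotonicity of $\beta$) and $B_i=\vartheta(\rho_i^h)(\f_{i+1}-\f_i)$, the only tool you invoke, the first-order estimate $|\f_{i+1}-\f_i|\le c_{\f}|K_i|$, gives per cell $\frac{1}{|K_i|}|B_i|\le c_{\f}\vartheta(\rho_i^h)\le c_{\f}\beta_{max}M$; the sign of $\f_{i+1}-\f_i$ is uncorrelated with $s_i$, so no cancellation helps, and summing over the $N\sim m/h$ cells produces a contribution of order $1/h$ rather than the right-hand side $C_1|D\hat\rho_t^h|(\R)+C_2$ of your claimed Gr\"onwall inequality. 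So ``careful bookkeeping of the $(\cdot)^{\pm}$ decompositions'' with the first-difference estimate alone cannot close the argument.

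What is missing is a discrete summation by parts in $i$ against the sign sequence $s_i$, which moves a discrete derivative onto $B_i$; the resulting increments $B_i-B_{i-1}$ contain $(\f_{i+1}-\f_i)-(\f_i-\f_{i-1})$ and therefore require the second-order estimate of Lemma~\ref{lem:liploc_DPA}, $|\f_{i+1}-2\f_i+\f_{i-1}|\le c_{\f}\bigl(|K_i|^2+|K_{i-1}|^2+\bigl||K_i|-|K_{i-1}|\bigr|\bigr)$, i.e.\ a discrete $W^{2,\infty}$-type bound on the force that your proposal never uses. With it, the problematic sum is controlled by $C\sum_i|\rho_i^h-\rho_{i-1}^h| + C|\x_N-\x_0|$, and Gr\"onwall applies; this is precisely the content of Lemma~\ref{lem:totalvariation} in the paper. (Your remaining ingredients are sound: the initial bound $|D\hat\rho_0^h|(\R)\lesssim |D\bar\rho|(\R)$ holds because $\bar\rho_i^h$ is the average of $\bar\rho$ over the initial cell, the time equicontinuity follows as you say from Lemma~\ref{lem:reconstruction-continuity}, and the identification of the limit flux by disintegration is exactly the paper's argument.)
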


For the proof of Theorem~\ref{thm:compactness}, we will be making use of the following generalization of the Aubin--Lions lemma that can be deduced from \cite{RoSa2003}.

\begin{prop}[Generalized Aubin--Lions]\label{thm:aubin}
Let $\{\eta^N\}_{N\in\N}$ be a sequence in $L^{\infty}([0,\,T]; L^1(\R))$ such that
$\eta_t^N \geq 0$ and $\|\eta_t^N\|_{L^1(\R)}=1$ for every $N\in\N$ and $t\in [0,\,T]$.
If the following conditions hold
\begin{enumerate}[label=(\roman*)]
\item $\sup_{N}  \int_0^{T} \left[ \|\eta_t^N\|_{BV(\R)}+ \mathrm{meas}(\mathrm{supp}(\eta_t^N))\right]dt < \infty$\,,
\item there exists a constant $C>0$ independent from $N$ such that
\[
    W_1\big( \eta_t^N, \eta_s^N \big) \le C |t-s|\qquad\text{for all $s,\,t \in (0,\,T)$\,,}
\]
\end{enumerate}
then there exists some $\eta\in L^1([0,T]\times\R)\cap C([0,T];\calM^+(\R))$ and a subsequence (not relabelled) of $\{\eta^N\}_{N\in\N}$ such that 
\[
\left.\begin{aligned}
    \eta^N\to \eta\quad &\text{in $L^1([0,T]\times\R)$\,,\; and}\\
    \eta_t^N\rightharpoonup^* \eta_t\quad &\text{weakly-$*$ in $\calM^+(\R)$\quad for every $t\in[0,T]$}
\end{aligned}\quad\right\}\quad\text{as $N\to\infty$\,.}
\]
\end{prop}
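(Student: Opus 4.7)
The plan is to apply the generalized Aubin--Lions result (Proposition~\ref{thm:aubin}) to the rescaled densities $\hat\rho^h/m$, which are non-negative probability densities on $\R$, and then separately handle the flux via Banach--Alaoglu. Concretely, I would first verify the two hypotheses (i)--(ii) of Proposition~\ref{thm:aubin} uniformly in $h\in(0,1)$, namely a uniform bound on $\int_0^T \bigl[\|\hat\rho_t^h\|_{BV(\R)} + \mathrm{meas}(\mathrm{supp}(\hat\rho_t^h))\bigr]dt$ and a uniform Lipschitz estimate in the $W_1$ distance $W_1(\hat\rho_t^h,\hat\rho_s^h)\le C|t-s|$. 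The output of Proposition~\ref{thm:aubin} gives $\rho \in L^1([0,T]\times\R)\cap C([0,T];\calM^+(\R))$ and the first two convergences; the flux convergence and the identification $(\rho,j)\in\mathcal{CE}(0,T)$ are then obtained by a separate compactness and passage-to-the-limit argument.

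For the verification of the Aubin--Lions hypotheses I would proceed as follows. The support estimate is immediate: by Lemma~\ref{lem:extend-DPA}, $\sup_{t\in[0,T]}|\x_N^h(t)-\x_0^h(t)|\le q_2(T)$ uniformly in $h$, so $\mathrm{meas}(\mathrm{supp}(\hat\rho_t^h))\le q_2(T)$. The $L^\infty$ bound $\hat\rho_t^h \le M$ follows directly from Lemma~\ref{lem:lower_DPA}, since $\rho_i^h(t) = h/|K_i(t)|\le M$. The $W_1$-Lipschitz estimate is a by-product of the continuity equation satisfied by $(\hat\rho^h,\hat\jmath^h)$ (Lemma~\ref{lem:reconstruction-continuity}) together with the uniform bound $\|u_i^h(t,\cdot)\|_{L^\infty(K_i)}\le c_u$: by Kantorovich duality and testing on $1$-Lipschitz functions,
\[
    W_1(\hat\rho_t^h,\hat\rho_s^h) \le \int_s^t |\hat\jmath_r^h|(\R)\,dr \le c_u m\,|t-s|,
\]
uniformly in $h$.

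The main technical obstacle is the uniform BV estimate. Since $\hat\rho_t^h$ is piecewise constant on the cells $K_i(t)$,
\[
    |D\hat\rho_t^h|(\R) = \rho_0^h(t) + \rho_{N-1}^h(t) + \sum_{i=0}^{N-2} |\rho_{i+1}^h(t)-\rho_i^h(t)|,
\]
so a uniform control amounts to bounding the discrete total variation $\mathrm{TV}^h(t):=\sum_i |\rho_{i+1}^h(t) - \rho_i^h(t)|$ uniformly in $h$ and $t$. I would establish a Gr\"onwall-type differential inequality for $\mathrm{TV}^h$: differentiating $\rho_i^h = h/|K_i|$ in time produces $\dot\rho_i^h = -(\rho_i^h)^2(\dot\x_{i+1}-\dot\x_i)/h$, and the upwind structure of \eqref{eq:particle_complete} together with the second-order discrete bounds $|\f_{i+1}-2\f_i+\f_{i-1}|\le c_{\f}(|K_i|^2+|K_{i-1}|^2+||K_i|-|K_{i-1}||)$ from Lemma~\ref{lem:liploc_DPA} should produce a cancellation in the differences $\rho_{i+1}^h - \rho_i^h$ analogous to Harten's TVD-type arguments for monotone schemes on scalar conservation laws. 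Combining with the uniform $L^\infty$ bound on $\hat\rho^h$ and the uniform control of the forces $\f_i$ from Lemma~\ref{lem:extend-DPA}, one expects $\mathrm{TV}^h(t)\le C(T)\,(1+\mathrm{TV}^h(0))$, and the initial total variation is controlled by $|D\bar\rho|(\R)$ via the construction \eqref{eq:dscr_IC_un}.

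Having the convergence of $\hat\rho^h$ in hand, the flux part follows more softly. Lemma~\ref{lem:reconstruction-continuity}(ii) states that the family $\{t\mapsto|\hat\jmath_t^h|(\R)\}_{h\in(0,1)}$ is uniformly integrable with $|\hat\jmath_t^h|(\R)\le c_u m$, so the measures $J^h(dt\,dx):= \hat\jmath_t^h(dx)\,dt$ have uniformly bounded total variation on $[0,T]\times\R$; by Banach--Alaoglu there is a subsequence and a $J\in\calM([0,T]\times\R)$ with $J^h\rightharpoonup^* J$. Using the $W_1$-equicontinuity and the uniform support bound, $J$ can be disintegrated as $J(dt\,dx) = j_t(dx)\,dt$ with $t\mapsto |j_t|(\R)\in L^1(0,T)$. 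Finally, passing to the limit in the integrated weak form
\[
    \langle\varphi,\hat\rho_t^h\rangle - \langle\varphi,\hat\rho_s^h\rangle = \int_s^t \langle\partial_x\varphi,\hat\jmath_r^h\rangle\,dr, \qquad \varphi\in\Lip_b(\R),
\]
using the pointwise weak-$*$ convergence of $\hat\rho_t^h$ and the weak-$*$ convergence of $J^h$, identifies $(\rho,j)\in\mathcal{CE}(0,T)$ and concludes the proof.
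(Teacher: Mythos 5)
There is a fundamental mismatch between your proposal and the statement: Proposition~\ref{thm:aubin} is the abstract compactness lemma itself, i.e.\ for an \emph{arbitrary} sequence $\{\eta^N\}$ of time-dependent probability densities satisfying the integrated $BV$-plus-support bound (i) and the $W_1$-equicontinuity (ii), one must construct the limit $\eta$ and extract a subsequence converging strongly in $L^1([0,T]\times\R)$ and weakly-$*$ in $\calM^+(\R)$ for every $t\in[0,T]$. Your very first step is to \emph{apply} Proposition~\ref{thm:aubin} to the rescaled densities $\hat\rho^h/m$, and everything that follows (support bound via Lemma~\ref{lem:extend-DPA}, the $W_1$-Lipschitz bound from the continuity equation, the discrete TV estimate, Banach--Alaoglu for the flux) is a verification of its hypotheses for the specific DPA reconstruction plus the treatment of $\hat\jmath^h$. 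That is the content of Theorem~\ref{thm:compactness} together with Lemmas~\ref{lem:totalvariation} and \ref{thm:weak-compactness}, not of the proposition; as a proof of the proposition it is circular, since it assumes exactly what is to be shown.

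What is missing is the compactness mechanism for a general sequence, which in the paper is delegated to Rossi--Savar\'e \cite{RoSa2003}. A proof along those lines applies their abstract result with the functional $\eta\mapsto\|\eta\|_{BV(\R)}+\mathrm{meas}(\mathrm{supp}(\eta))$, whose sublevel sets are relatively compact in $L^1(\R)$ (uniform $BV$ bound plus uniformly bounded support, a Helly/Rellich-type embedding), and with the $1$-Wasserstein distance as the compatible (pseudo-)distance whose equicontinuity condition (ii) controls oscillations in time; this yields strong $L^1([0,T]\times\R)$ convergence of a subsequence, while the $W_1$-equicontinuity combined with an Arzel\`a--Ascoli/diagonal argument and lower semicontinuity identifies a limit curve in $C([0,T];\calM^+(\R))$ and gives weak-$*$ convergence at every fixed $t$. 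None of these ingredients appear in your proposal, so the statement itself remains unproven; your verification of (i)--(ii) for $\hat\rho^h/m$ would instead be a reasonable outline for Theorem~\ref{thm:compactness}, which is a different (downstream) result.
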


\begin{proof}[Proof of Theorem~\ref{thm:compactness}]
    It is not difficult to see that $\eta^h:=m^{-1}\hat\rho^h$ satisfies $\|\eta_t^h\|_{L^1(\R)}=1$ and
    \begin{align*}
        \sup_{h\in(0,1)}\sup_{t\in[0,T]}\mathrm{meas}(\mathrm{supp}(\eta_t^h)) = \sup_{h\in(0,1)}\sup_{t\in[0,T]}|\x_N^h(t) - \x_0^h(t)|<\infty\,
        % = \sum_{i=0}^{N-1} \bigl(\x_{i+1}^h(t) - \x_{i}^h(t)\bigr) \\
        % &\le \sum_{i=0}^{N-1} \frac{e^{cT}}{\sigma}h = m\frac{e^{cT}}{\sigma}\qquad\text{for all $h>0$ and $t\in[0,T]$}\,,
    \end{align*}
    where we used property {\em(ii)} of Theorem~\ref{thm:existence-DPA}. Therefore, the remaining property to show in condition {\it (i)} of Proposition~\ref{thm:aubin} is 
    \[
        \sup_{h\in(0,1)}\int_0^T \|\eta_t^h\|_{BV(\R)}\,dt<\infty\,,
    \] 
    which we establish in Lemma~\ref{lem:totalvariation} below. Condition {\it (ii)} of Proposition~\ref{thm:aubin} will be established subsequently in Lemma~\ref{thm:weak-compactness} below. An application of Proposition~\ref{thm:aubin} then yields the existence of some $\eta\in L^1([0,T]\times \R)$ and a subsequence (not relabelled) of $\{\eta^h\}_{h\in(0,1)}$, such that
    \[\left.\begin{aligned}
        \hat\rho^h \to \rho := m\eta\quad &\text{in $L^1([0,T]\times\R)$\,,\; and} \\
        \hat\rho_t^h \rightharpoonup \rho_t\quad &\text{weakly-$*$ in $\calM^+(\R)$\quad for every $t\in[0,T]$}
    \end{aligned}\quad\right\}\quad\text{as $h\to 0$}\,.
    \]
    Furthermore, the limiting curve $t\mapsto\rho_t$ is weakly-$*$ continuous.
    
    For the flux, we consider the family $\{J^h\}_{h\in(0,1)}$, where
    \[
        J^h := \int_{\cdot}\, \hat\jmath_t^h\,dt \in \calM([0,T]\times\R)\,.
    \]
    From Lemma~\ref{lem:reconstruction-continuity}, we have that $\sup_{h\in(0,1)} |J^h|([0,T]\times\R)<\infty$, which yields the existence of some $J\in \calM([0,T]\times\R)$ and a subsequence (not relabelled) of $\{J^h\}_{h\in(0,1)}$ such that
    \[
        J^h\rightharpoonup^* J\quad\text{weakly-$*$ in $\calM([0,T]\times\R)$}\,.
    \]
    Since Lemma~\ref{lem:reconstruction-continuity} also provides the uniform integrability of the family $\{t\mapsto|\hat\jmath_t^h|(\R)\}_{h\in(0,1)}$, there exists a Borel measurable family $\{j_t\}_{t\in(0,T)}\in\calM(\R)$ such that $J = \int_{\cdot}\, j_t\,dt$.
    
    To verify that $(\rho,j)\in\mathcal{CE}(0,T)$, it suffices to notice that $(\hat\rho^h,\hat\jmath^h)$ satisfies \eqref{eq:cont-eq} for every $h\in(0,1)$. Passing to the limit with the convergences above yields the assertion, thereby concluding the proof.
\end{proof}

With the following lemmas, we fill in the gap in the proof of Theorem~\ref{thm:compactness}.

\begin{lemma}\label{lem:totalvariation}
Let the assumptions of Theorem~\ref{thm:compactness} be satisfied. Then for every $h\in(0,1)$ we have
\begin{equation}\label{TV}
\|\hat{\rho}_t^h\|_{BV(\R)} \leq c_{BV}\,\|\bar\rho^h\|_{BV(\R)} \qquad \text{for all $t \in [0,T]$}\,
\end{equation}
for some constant $c_{BV}=c_{BV}(\beta,V,W,T)>0$ independent of $h\in(0,1)$.
\end{lemma}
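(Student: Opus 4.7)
Since $\hat\rho_t^h$ is piecewise constant with compact support and $\int \hat\rho_t^h = m$ (conserved along the dynamics), the BV norm decomposes as
\[
\|\hat\rho_t^h\|_{BV(\R)} = m + \rho_0^h(t) + \rho_{N-1}^h(t) + \sum_{i=1}^{N-1}\bigl|\rho_i^h(t) - \rho_{i-1}^h(t)\bigr|,
\]
where the three jump contributions occur at $\x_0$, $\x_N$, and at the internal interfaces $\x_i$ for $i=1,\ldots,N-1$. By Lemma~\ref{lem:lower_DPA} we have $\rho_i^h \leq M$, so the boundary jumps contribute at most $2M$. The task therefore reduces to controlling the interior total variation
\[
S(t) := \sum_{i=1}^{N-1}\bigl|\rho_i^h(t) - \rho_{i-1}^h(t)\bigr|
\]
by its initial value $S(0)$, uniformly in $h\in(0,1)$.

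The plan is to obtain a Gronwall-type differential inequality for $S$. From $\rho_i^h = h/|K_i|$ one computes $\dot\rho_i^h = -(\rho_i^h)^2 (\dot\x_{i+1}-\dot\x_i)/h$, and substituting the DPA equations yields
\[
\dot\x_{i+1}-\dot\x_i = -\beta(\rho_{i+1}^h)\f_{i+1}^- - \beta(\rho_i^h)\f_{i+1}^+ + \beta(\rho_i^h)\f_i^- + \beta(\rho_{i-1}^h)\f_i^+ .
\]
To differentiate $t\mapsto |\rho_i^h - \rho_{i-1}^h|$ across points where the two densities coincide, I would regularise $|\cdot|$ by a smooth convex approximation and pass to the limit, or equivalently work with one-sided Dini derivatives. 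Grouping terms, the resulting expression for $\dot S$ splits naturally into two pieces: (a) a transport piece reflecting the upwind/Godunov character of the scheme, in which one rewrites the differences of $\beta(\rho_\cdot^h)\f_\cdot^\pm$ so that the monotonicity of $\beta$ and the algebraic identity $\f_i = \f_i^+ + \f_i^-$ produce contributions that telescope or are sign-definite in the right direction, exactly as in the classical TVD estimate for Godunov's method; and (b) a source piece involving $\f_{i+1}-\f_i$ and $\f_i-\f_{i-1}$, which, by Lemma~\ref{lem:liploc_DPA}, is bounded by $c_\f |K_i|$ and similar, giving a total contribution controlled by $c_\f\beta_{\max}\sum_i(\rho_i^h + \rho_{i-1}^h)|K_i|$, i.e.\ by a constant times $m$.

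Combining (a) and (b), I expect a bound of the form $\dot S(t) \leq C_1 S(t) + C_2$ on $[0,T]$, where $C_1$, $C_2$ depend only on $V$, $W$, $\beta$, $m$ and $M$ (through $c_\f$, $\beta_{\max}$). Gronwall's lemma then yields $S(t) \leq (S(0) + C_2 T)\,e^{C_1 T}$, and since $S(0) \leq \|\bar\rho^h\|_{BV(\R)}$ while the mass and boundary jumps are bounded uniformly in $h$, the stated estimate follows with $c_{BV} = c_{BV}(\beta,V,W,T)$.

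The main obstacle is item (a): carefully extracting the TVD-like cancellation in the presence of a non-constant velocity field generated by $\sfF_\rho$. In the classical Godunov scheme with constant velocity, the sum $\sum_i|\rho_i - \rho_{i-1}|$ is non-increasing; here the variable force perturbs this balance, but the perturbation is precisely controlled by the spatial regularity of $\f$ provided by Lemma~\ref{lem:liploc_DPA}. A secondary technical point is the handling of selections of $\sign(\rho_i^h - \rho_{i-1}^h)$ at coincidence points, which is standard but must be done consistently with the sign structure of $\f_i^\pm$ so as not to spoil the upwind cancellations.
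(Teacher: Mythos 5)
Your overall architecture (decompose the BV norm into mass, boundary jumps and interior variation $S(t)=\sum_i|\rho_i^h-\rho_{i-1}^h|$, differentiate in time, exploit the monotonicity of $\beta$ for a sign-definite ``TVD'' part, and close with Gronwall) is the same as the paper's, and your part (a) is exactly the paper's observation that the weights $s_i=\sign(\rho_i^h-\rho_{i-1}^h)-\sign(\rho_{i+1}^h-\rho_i^h)$ multiply the $\beta$-difference terms with the favourable sign. The genuine gap is in your part (b), the ``source'' estimate. The force-difference contribution to $\dot\rho_i^h$ is $\pm\vartheta(\rho_i^h)(\f_{i+1}-\f_i)/|K_i|$; with the first-order bound $|\f_{i+1}-\f_i|\le c_{\f}|K_i|$ of Lemma~\ref{lem:liploc_DPA} this is only of size $c_{\f}\vartheta(\rho_i^h)\le c_{\f}\beta_{max}M$ \emph{per index} --- no factor $|K_i|$ survives the division by $|K_i|$ coming from $\dot\rho_i^h=-\rho_i^h(\dot\x_{i+1}-\dot\x_i)/|K_i|$. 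Summing over $i$ therefore gives $O(N)=O(h^{-1})$, not $O(m)$: your claimed bound $c_{\f}\beta_{max}\sum_i(\rho_i^h+\rho_{i-1}^h)|K_i|$ carries an extra $|K_i|$ that is not there, so the inequality $\dot S\le C_1S+C_2$ with $h$-independent constants does not follow from this accounting.

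The paper closes precisely this point by a second Abel summation, exploiting that $s_i$ is itself a discrete difference of signs: writing $B_i=\vartheta(\rho_i^h)(\f_{i+1}-\f_i)$, the sum $-\sum_i s_i\rho_i^hB_i/h$ is rearranged into (i) terms of the form $|\rho_i^h-\rho_{i-1}^h|\,|B_i|/h\le c_{\f}\beta_{max}|\rho_i^h-\rho_{i-1}^h|$, which are absorbed into the total variation and handled by Gronwall, and (ii) terms of the form $\rho_i^h|B_i-B_{i-1}|/h$, for which one needs the \emph{second}-difference estimate $|\f_{i+1}-2\f_i+\f_{i-1}|\le c_{\f}\bigl(|K_i|^2+|K_{i-1}|^2+\bigl||K_i|-|K_{i-1}|\bigr|\bigr)$ of Lemma~\ref{lem:liploc_DPA}, the Lipschitz continuity of $\vartheta$, and the identity $\rho_i^h\rho_{i-1}^h\bigl||K_i|-|K_{i-1}|\bigr|=h\,|\rho_i^h-\rho_{i-1}^h|$, which again returns a multiple of the total variation plus an $h$-independent constant (proportional to $M|\x_N-\x_0|$). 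Without this second summation by parts (or some substitute making quantitative use of the fact that $s_i\ne0$ only at discrete local extrema), the first-order Lipschitz bound on $\f$ alone is not enough, so as written your argument does not yield the Gronwall inequality; note also that this is exactly why Lemma~\ref{lem:liploc_DPA} states a second-difference estimate at all.
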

\begin{proof}
We begin by showing that the map $t\mapsto |D\hat\rho_t^h|(\R)$ is absolutely continuous for each $h\in(0,1)$, where the total variation of $D\hat\rho_t^h$ can be explicitly computed, for each $t\in[0,T]$, as
\[
    |D\hat\rho_t^h|(\R) = \rho_0^h(t) + \sum_{i=1}^{N-1} |\rho_i^h(t)-\rho_{i-1}^h(t)| + \rho_{N-1}^h(t)\,.
\]
Indeed, for each $s,t\in(0,T)$, one obtains by the triangle inequality,
\begin{align*}
    \bigl||D\hat\rho_t^h|(\R) - |D\hat\rho_s^h|(\R)\bigr|
    % &= [\rho_0^h(t) - \rho_0^h(s)] + \sum_{i=1}^{N-1} \Bigl[|\rho_i^h(t)-\rho_{i-1}^h(t)| - |\rho_i^h(s)-\rho_{i-1}^h(s)|\Bigr] + [\rho_{N-1}^h(t) - \rho_{N-1}^h(s)] \\
    &\le |\rho_0^h(t) - \rho_0^h(s)| + |\rho_{N-1}^h(t) - \rho_{N-1}^h(s)| \\
    &\hspace{4em}+ \sum_{i=1}^{N-1} \Bigl[|\rho_i^h(t)-\rho_i^h(s)|+ |\rho_{i-1}^h(t)-\rho_{i-1}^h(s)|\Bigr]\,.
\end{align*}
We then make use of the following estimate
\begin{align*}
    |\rho_i^h(t)-\rho_i^h(s)| &= \left|\int_s^t \dot\rho_i^h(r)\,dr\right| \le \int_s^t \frac{\rho_i^h(r)}{|K_i|(r)}\left|\dot\x_{i+1}-\dot\x_i\right| dr \\
    &\le 2\frac{M^2}{h}\left(\max\nolimits_{i=0,\ldots,N}|\dot\x_i|\right)|t-s| \qquad\text{for each $i=0,\ldots,N-1$}\,,
\end{align*}
and the bound $|\dot\x_i|\le \beta_{max}|\f_i|\le C$ for some constant $C>0$, independent of $h$ (cf.\ Lemma~\ref{lem:extend-DPA}), to deduce the Lipschitz continuity of $t\mapsto \rho_i^h(t)$, and consequently of $t\mapsto |D\hat\rho_t^h|(\R)$.

Now let us introduce the following time-dependent functions
\[
    s_i := \sign(\rho_{i}^h-\rho_{i-1}^h) - \sign(\rho_{i+1}^h-\rho_{i}^h) = \begin{cases}
        2 & \text{if $\rho_i^h\ge \rho_{i-1}^h$ and $\rho_{i+1}^h\le \rho_i^h$}\,, \\
        -2 & \text{if $\rho_i^h\le \rho_{i-1}^h$ and $\rho_{i+1}^h\ge \rho_i^h$}\,, \\
        0 & \text{otherwise}\,,
    \end{cases}
\]
for $i=1,...,N-2$. At each point of differentiability $t\in(0,T)$, we obtain 
\begin{align}\label{eq:TV-derivative}
     \frac{d}{dt}|D\hat\rho_t^h|(\R)& = \dot\rho_0^h(t) + \sum_{i=1}^{N-1} \sign(\rho_{i}^h(t)-\rho_{i-1}^h(t))(\dot\rho_{i}^h(t) - \dot\rho_{i-1}^h(t)) + \dot\rho_{N-1}^h(t) \notag \\
    %  \dot{\rho}_0^h(t)+\dot{\rho}_{N-1}^h(t)+\sum_{i=1}^{N-2}\sign\bigl(\rho_{i+1}^h(t)-\rho_i(t)^h\bigr)\bigl(\dot{\rho}_{i+1}^h(t)-\dot{\rho}_i^h(t)\bigr)\\
    % &= s_0(t)\rho_0^h(t)+s_{N-1}(t)\rho_{N-1}^h(t)+\sum_{i=1}^{N-2}s_i(t)\dot{\rho}_i^h(t)\,,
    &= s_{N-1}\dot\rho_{N-1}^h(t) + s_0\dot\rho_0^h(t) + \sum_{i=1}^{N-2} s_i\dot\rho_{i}^h(t)\,,
    % &=  - \dot\rho_{N-1}^h  - \dot\rho_0^h + \sum_{i=0}^{N-1} s_i\dot\rho_{i}^h
\end{align}
% \begin{align*}
%     \frac{d}{dt} \sum_{i=1}^{N-1} |\rho_{i}^h - \rho_{i-1}^h| &= \sum_{i=1}^{N-1} \sign(\rho_{i}^h-\rho_{i-1}^h)(\dot\rho_{i}^h - \dot\rho_{i-1}^h) \\
%     &= \sum_{i=1}^{N-1} \sign(\rho_{i}^h-\rho_{i-1}^h)\dot\rho_{i}^h - \sum_{i=1}^{N-1} \sign(\rho_{i}^h-\rho_{i-1}^h)\dot\rho_{i-1}^h \\
%     &= \sum_{i=1}^{N-1} \sign(\rho_{i}^h-\rho_{i-1}^h)\dot\rho_{i}^h - \sum_{i=0}^{N-2} \sign(\rho_{i+1}^h-\rho_{i}^h)\dot\rho_{i}^h \\
%     &= \sum_{i=1}^{N-2}\left[\sign(\rho_{i}^h-\rho_{i-1}^h) - \sign(\rho_{i+1}^h-\rho_{i}^h) \right] \dot\rho_{i}^h \\
%     &\qquad +\sign(\rho_{N-1}^h - \rho_{N-2}^h)\dot\rho_{N-1}^h - \sign(\rho_1^h - \rho_0^h)\dot\rho_0^h \\
%     &= \sign(\rho_{N-1}^h - \rho_{N-2}^h)\dot\rho_{N-1}^h - \sign(\rho_1^h - \rho_0^h)\dot\rho_0^h + \sum_{i=1}^{N-2} s_i\dot\rho_{i}^h
% \end{align*}
where the second equality follows from simply rearranging the terms. 

For each $i=0,\ldots,N-1$, we have that
\[
    \dot\rho_{i}^h = -\frac{\rho_i^h}{|K_i|(t)}\left(\dot\x_{i+1}-\dot\x_i\right) = \frac{\rho_i^h}{|K_i|(t)} A_i - \frac{1}{|K_i|(t)} B_i\,,
\]
where
\[
    A_i := \begin{cases}
        \bigl(\beta(\rho_{1}^h)-\beta(\rho_{0}^h)\bigr)\f_{1}^- + \bigl(\beta(\rho_{0}^h)-\beta_{max}\bigr)\f_{1}^+ & \text{for $i=0$} \\
        \bigl(\beta(\rho_{i+1}^h)-\beta(\rho_i^h)\bigr)\f_{i+1}^-+\bigl(\beta(\rho_i^h)-\beta(\rho_{i-1}^h)\bigr)\f_i^+ &\text{for $i=1,\ldots,N-2$}\,, \\
        \bigl(\beta_{max}-\beta(\rho_{N-1}^h)\bigr)\f_{N-1}^- + \bigl(\beta(\rho_{N-1}^h)-\beta(\rho_{N-2}^h)\bigr)\f_{N-1}^+ & \text{for $i=N-1$}\,,
    \end{cases}
\]
and 
\[
    B_i:= \vartheta(\rho_i^h)(\f_{i+1}-\f_{i})\qquad \mbox{for $ i=0,\ldots,N-1$\,.}
\]
% \[
%     B_i:= \beta(\rho_i^h)\bigl(\f_{i+1}-\f_{i}\bigr)\qquad \text{for $i=0,\ldots,N-1$}
% \]
% \begin{align*}
%     \dot\rho_{N-1}^h &= -\frac{\rho_{N-1}^h}{h}\rho_{N-1}^h (\dot \x_{N} - \dot \x_{N-1}) \\
%     &= \frac{\rho_{N-1}^h}{h}\rho_{N-1}^h\left(\beta_{max}\f_{N}^-+\beta(\rho_{N-1}^h)\f_{N}^+-\beta(\rho_{N-1}^h)\f_{N-1}^--\beta(\rho_{N-2}^h)\f_{N-1}^+\right) \\
%     &= \frac{\rho_{N-1}^h}{h}\rho_{N-1}^h\left(\left(\beta_{max}-\beta(\rho_{N-1}^h)\right)\f_{N}^- +\left(\beta(\rho_{N-1}^h)-\beta(\rho_{N-2}^h)\right)\f_{N-1}^+\right) \\
%     &+ \frac{\rho_{N-1}^h}{h}\rho_{N-1}^h\beta(\rho_{N-1}^h)\left(\f_{N}-\f_{N-1}\right)
% \end{align*}
% \begin{align*}
%     \dot\rho_{0}^h &= -\frac{\rho_{0}^h}{h}\rho_{0}^h (\dot \x_{1} - \dot \x_{0}) \\
%     &= \frac{\rho_{0}^h}{h}\rho_{0}^h\left(\beta(\rho_{1}^h)\f_{1}^-+\beta(\rho_{0}^h)\f_{1}^+-\beta(\rho_{0}^h)\f_{0}^--\beta(\rho_{-1}^h)\f_{0}^+\right) \\
%     &= \frac{\rho_{0}^h}{h}\rho_{0}^h\left(\left(\beta(\rho_{1}^h)-\beta(\rho_{0}^h)\right)\f_{1}^- +\left(\beta(\rho_{0}^h)-\beta(\rho_{-1}^h)\right)\f_{0}^+\right) \\
%     &+ \frac{\rho_{0}^h}{h}\rho_{0}^h\beta(\rho_{0}^h)\left(\f_{1}-\f_{0}\right)
% \end{align*}
Recall that $\vartheta(\rho)=\rho\beta(\rho)$. For the first term in \eqref{eq:TV-derivative}, we find
\begin{align*}
    s_{N-1}\dot\rho_{N-1}^h &=  s_{N-1}\left(\frac{\rho_{N-1}^h}{\x_{N}-\x_{N-1}}A_{N-1} -\frac{1}{\x_{N}-\x_{N-1}} B_{N-1}\right) \\
    &\le \frac{1}{\x_{N}-\x_{N-1}} |B_{N-1}| \le c_{\f}\beta_{max}\rho_{N-1}^h\,,
\end{align*}
where we used the fact that $s_{N-1} A_{N-1}\le 0$ in the first inequality due to the monotonicity of $\beta$, and Lemma~\ref{lem:liploc_DPA} in the second inequality. The second term in \eqref{eq:TV-derivative} can be estimated similarly to obtain
\[
    s_0\dot\rho_0^h \le c_{\f}\beta_{max}\rho_{0}^h\,.
\]
As for the last term in \eqref{eq:TV-derivative}, we observe that
\[
    s_iA_i \le 0\qquad \text{for $i=1,\ldots,N-2$}\,,
\]
again due to the monotonicity of $\beta$ and the definition of $s_i$,
To estimate the last term, we first rewrite
\begin{equation}\label{splitBi}
\begin{split}
  -\sum_{i=1}^{N-2}s_i\frac{\rho_i^h}{h}B_i &= \underbrace{B.T.+\frac{1}{h}\sum_{i=1}^{N-1}\sign(\rho_{i-1}^h-\rho_i^h)(\rho_i^h-\rho_{i-1}^h)B_i}_{\text{(I)}}\\
  &\qquad +\underbrace{\frac{1}{h}\sum_{i=2}^{N-2}\sign(\rho_{i-1}^h-\rho_i^h)\rho_i^h(B_i-B_{i-1})}_{\text{(II)}}.
\end{split}
\end{equation}
The term (I) can be easily bounded by
\begin{align*}
  |\text{(I)}| &\leq C_1+\frac{1}{h}\sum_{i=1}^{N-1}|\rho_i^h-\rho_{i-1}^h||B_i| \leq C_1+\frac{1}{h}\sum_{i=1}^{N-1}|\rho_i^h-\rho_{i-1}|\rho_i^h\beta(\rho_i^h)|\f_{i+1}-\f_i|\\
%  & \leq C_1+\frac{1}{h}\sum_{i=1}^{N-1}|\rho_i^h-\rho_{i-1}^h|\rho_i^h\beta(\rho_i^h)c_{\f}|K_i|\\
 & \leq C_1+c_{\f}\beta_{max}\sum_{i=1}^{N-1}|\rho_i^h-\rho_{i-1}^h|\,, 
\end{align*}
for some constant $C_1>0$, independent of $h\in(0,1)$. Concerning term (II), we write
\begin{align*}
    B_i-B_{i-1}&= \vartheta(\rho_{i-1}^h)(\f_i-\f_{i-1})-\vartheta(\rho_{i}^h)(\f_{i+1}-\f_i)\\
    &=\left(\vartheta(\rho_{i-1}^h)-\vartheta(\rho_i^h)\right)\left(\f_{i+1}-\f_i\right)+\vartheta(\rho_{i-1}^h)\left((\f_i-\f_{i-1})-(\f_{i+1}-\f_i)\right),
\end{align*}
which can be bounded from above, due to Lemma~\ref{lem:liploc_DPA}, by
\begin{align*}
    &\left|\vartheta(\rho_{i-1}^h)-\vartheta(\rho_i^h)\right|\left|\f_{i+1}-\f_i\right| + \vartheta(\rho_{i-1}^h)\left|(\f_i-\f_{i-1})-(\f_{i+1}-\f_i)\right|\\
    &\qquad\leq c_{\f}\Lip(\vartheta)\left|\rho_i^h-\rho_{i-1}^h\right||K_i| + c_{\f}\beta_{max}\rho_{i-1}^h\left( |K_i|^2 + |K_{i-1}|^2 + \bigl||K_i|-|K_{i-1}|\bigr|\right).
\end{align*}
We thus obtain
\[
    |\text{(II)}| \le c_{\f}\bigl(\beta_{max}+\Lip(\vartheta)\bigr)\sum_{i=2}^{N-2}\bigl|\rho_i^h-\rho_{i-1}^h\bigr| + 2c_{\f}\beta_{max}M|\x_N-\x_0|\,.
\]
We can then conclude that 
\begin{align*}
     \left|-\sum_{i=1}^{N-2}s_i\frac{\rho_i^h}{h}B_i\right|& \leq \tilde{c}_1+\tilde{c}_2\sum_{i=2}^{N-2}\left|\rho_i^h-\rho_{i-i}^h\right| \leq \tilde{c}_1+\tilde{c}_2|D\hat{\rho}^h|(\R)\,,
\end{align*}
for some constants $\tilde{c}_1,\tilde{c}_2>0$ independent of $h\in(0,1)$. A simple application of the Gronwall inequality then yields
\[
    |D\hat\rho_t^h|(\R) \le e^{\tilde c_2 t}\bigl( |D\bar\rho^h|(\R) + \tilde c_1 t\bigr) \,,
\]
from which we obtain \eqref{TV} by noting that $\|\hat\rho_t^h\|_{L^1(\R)} = \|\bar\rho^h\|_{L^1(\R)}$ for all $h\in(0,1)$.
\end{proof}

% \begin{remark}\label{rem:TV-flux}
%     A direct consequence of the total variation bounded provided in Lemma~\ref{lem:totalvariation} is that the continuous flux $\hat\jmath^h(t,\cdot)$ has finite total variation for every $t\in[0,T]$. Indeed, from the assumptions on $V$ and $W$ we easily conclude that
%     \begin{align}\label{eq:uniform-x-tv}
%         \sup_{h>0}\sup_{t\in[0,T]} \sum_{i=1}^{N-1} h|\dot\x(t)|<\infty\,,
%     \end{align}
%     and, consequently, also
%     \[
%         \sup_{h>0}\sup_{t\in[0,T]}|\hat\jmath^h|(t,\R^d) <\infty\,.
%     \]
% \end{remark}

\begin{lemma}\label{thm:weak-compactness}
Let the assumptions of Theorem~\ref{thm:compactness} be satisfied. Then there exists a constant $C>0$, independent of $h\in(0,1)$, such that
    \[
        W_1\left(\frac{1}{m}\hat\rho_t^h,\frac{1}{m}\hat\rho_s^h\right) \le C|t-s|\qquad\text{for all $s,\,t \in (0,\,T)$\,.}
    \]
    % Then there exists a pair $(\rho,j)\in {CE}(0,T)$ such that
    % \begin{gather*}
    %     \hat\rho^h(t)\rightharpoonup^* \rho_t\quad\text{weakly-$*$ in $\calM^+(\R)$\; for all $t\in[0,T]$}, and\\ \hat\jmath^h \rightharpoonup^* j\quad \text{in $\calM([0,T]\times\R)$}\,.
    % \end{gather*}
\end{lemma}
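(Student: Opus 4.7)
The plan is to invoke the Kantorovich--Rubinstein duality for the $1$-Wasserstein distance and use the continuity equation that the reconstructed pair $(\hat\rho^h,\hat\jmath^h)$ satisfies by Lemma~\ref{lem:reconstruction-continuity}(i). Since mass is preserved in time, i.e.\ $\|\hat\rho_t^h\|_{L^1(\R)} = m$ for every $t\in[0,T]$, the normalized curves $t\mapsto \hat\rho_t^h/m$ take values in the space of Borel probability measures. Moreover, their supports are uniformly bounded in $h\in(0,1)$ and $t\in[0,T]$ by Theorem~\ref{thm:existence-DPA}(ii), so all first moments are finite, and the duality
\[
    W_1(\mu,\nu) = \sup\biggl\{ \int_\R \varphi\,d(\mu-\nu)\,:\, \varphi\in \Lip_b(\R),\; \Lip(\varphi)\le 1\biggr\}
\]
is applicable.

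The next step is to fix $\varphi\in\Lip_b(\R)$ with $\Lip(\varphi)\le 1$ and apply Definition~\ref{def:cont-eq} to the pair $(\hat\rho^h,\hat\jmath^h)$: for any $0\le s < t \le T$,
\[
    \frac{1}{m}\int_\R \varphi\,d(\hat\rho_t^h - \hat\rho_s^h) = \frac{1}{m}\int_s^t \langle \partial_x\varphi,\hat\jmath_r^h\rangle\,dr.
\]
Because $\|\partial_x\varphi\|_{L^\infty(\R)}\le 1$, the right-hand side is bounded in absolute value by $m^{-1}\int_s^t |\hat\jmath_r^h|(\R)\,dr$.

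To conclude, I would invoke the uniform bound established in the proof of Lemma~\ref{lem:reconstruction-continuity}(ii), namely $|\hat\jmath_r^h|(\R)\le c_u\, m$ for all $r\in[0,T]$ and $h\in(0,1)$, where $c_u$ is the uniform bound on the reconstruction velocities $u_i^h$. This immediately gives
\[
    \frac{1}{m}\bigl|\langle\varphi,\hat\rho_t^h - \hat\rho_s^h\rangle\bigr| \le c_u\,|t-s|,
\]
and taking the supremum over all admissible $\varphi$ in the duality formula yields the claimed $W_1$-Lipschitz estimate with constant $C = c_u$.

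The only mild obstacle is ensuring that the duality formula for $W_1$ applies in this setting (finite first moment, compact supports), but this is immediate from Theorem~\ref{thm:existence-DPA}(ii); no further estimates beyond those already in Lemma~\ref{lem:reconstruction-continuity} are required.
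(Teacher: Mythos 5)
Your proposal is correct and follows essentially the same route as the paper's proof: testing the continuity equation of Definition~\ref{def:cont-eq} against $1$-Lipschitz functions, bounding $\int_s^t|\hat\jmath_r^h|(\R)\,dr$ by the uniform estimate from Lemma~\ref{lem:reconstruction-continuity}, and concluding via Kantorovich--Rubinstein duality. The only difference is that you spell out the applicability of the duality formula (compact supports, finite first moments), which the paper leaves implicit.
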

\begin{proof}
    From the continuity equation we find
    \begin{align*}
	|\langle \varphi,\hat\rho_t^h\rangle - \langle \varphi,\hat\rho_t^h\rangle| &= \left|\int_s^t \langle\partial_x\varphi,\hat\jmath_r^h\rangle\,dr\right| \le \|\partial_x\varphi\|_{L^\infty}\int_s^t |\hat\jmath_r^h|(\R)\,dr\,, %\le 2|A|\|\j^h\|_{L^\infty(\ell^\infty)}\|\varphi'\|_{L^\infty}\,,
\end{align*}
which, due to Lemma~\ref{lem:reconstruction-continuity}, gives
\[
    |\langle \varphi,\hat\rho_t^h/m\rangle - \langle \varphi,\hat\rho_s^h/m\rangle| \le C\|\partial_x\varphi\|_{L^\infty}|t-s|\,,
\]
for an appopriate constant $C>0$ independent of $h\in(0,1)$ and $t\in[0,T]$. Taking the supremum over Lipschitz functions $\varphi$ satisying $\|\partial_x\varphi\|_{L^\infty}\le 1$ then yields the assertion.
\end{proof}

%\begin{remark}
%    Due to the local Lipschitz assumption on $P$ and the uniform upper bound $\|\hat\rho^h(t)\|_{L^\infty}\le M$ for all $t\in[0,T]$ and $h>0$, we easily see that
%\begin{enumerate}[label=(\roman*)]
%    \item $\sup_{h>0}\sup_{t\in[0,T]} \TV[P(\hat\rho^h(t))] <\infty$\,,
%    \item $P(\hat\rho^h) \to P(\rho)$ strongly in $L^1([0,T]\times\R)$\,.
%\end{enumerate}
    % \begin{align*}
    %     \TV[P(\hat\rho^h(t))] %&= P(\rho_0^h(t)) + \sum_{i=1}^{N-1} |P(\rho_i^h(t))-P(\rho_{i-1}^h(t))| + P(\rho_{N-1}^h(t)) \\
    %     &\le 2P(M) + P'(M)\TV[\hat\rho^h(t)]
    % \end{align*}
    % \[
    %     \|P(\hat\rho^h) - P(\rho)\|_{L^1([0,T]\times\R)} \le P'(M)\|\hat\rho^h-\rho\|_{L^1([0,T]\times\R)}
    % \]
%\end{remark}

\section{Convergence to gradient flow and entropic solutions}\label{sec:convergence}

Throughout this section, we assume $\bar\rho$, $\beta$, $V$ and $W$ to satisfy assumptions (In1), (A-$\beta$), (A-V) and (A-Wm) respectively. Furthermore, we consider a family $(\x^h)_{h\in(0,1)}$ of solutions to \eqref{eq:particle_complete} provided by Theorem~\ref{thm:existence-DPA}, and $(\hat\rho^h,\hat\jmath^h)_{h\in(0,1)}$ is the associated density-flux reconstruction, for which we assume to converge to some pair $(\rho,j)\in\mathcal{CE}(0,T)$ in the following sense:
\begin{align}\label{eq:convergences}
   \left. \begin{aligned}
    \hat\rho^h \to \rho\quad &\text{in $L^1([0,T]\times\R)$ and almost surely in $(0,T)\times\R$\,,} \\
    \hat\rho_t^h\rightharpoonup^* \rho_t\quad &\text{weakly-$*$ in $\calM^+(\R)$\quad for every $t\in[0,T]$\,,\; and}\\
    \int_{\cdot}\, \hat\jmath_t^h\,dt \rightharpoonup^* \int_{\cdot}\,j_t\,dt\quad &\text{weakly-$*$ in $\calM([0,T]\times \R)$}\,.
    \end{aligned}\quad \right\}\quad\text{as $h\to 0$\,.}
\end{align}
Note that such a family exists due to the compactness result given in Theorem~\ref{thm:compactness}. Furthermore, due to Theorem~\ref{thm:existence-DPA} we find some bounded domain $\Omega\subset\R$ such that
\[
    \text{$\bigcup_{h\in(0,1)}\text{supp}(\hat\rho_t^h)\cup\text{supp}(\rho_t)\subset\Omega$\quad for all $t\in[0,T]$\,.}
\]

% Recall the continuous density and flux reconstruction defined in Section~\ref{sec:Continuous}:
% \begin{align*}
%     \hat\rho_t^h(x) &= \sum_{i=0}^{N-1} \rho_i^h(t) \bbmI_{K_i(t)}(x)\,,\qquad \rho_i^h = \frac{h}{|K_i|},\qquad |K_i| = \x_{i+1} - \x_i\,,\\
%     \hat\jmath_t^h(x) &= \sum_{i=0}^{N-1} \rho_i^h(t)\, u_i^h(t,x) \bbmI_{K_i(t)}(x)\,,\qquad 
%     u^h_i(t,x) = \frac{\x_{i+1}-x}{|K_i|} \dot\x_i +\frac{x-\x_{i-1}}{|K_i|}\dot \x_{i+1}\,.
% \end{align*}

\subsection{Gradient flow solutions}

Before embarking on the proof of Theorems~\ref{mainthm:regular-newtonian} and \ref{mainthm:mild}, let us outline the ingredients involved in the proof.

\medskip

We consider the continuous driving energy \eqref{eq:energy_fun_intro} introduced in Section~\ref{sec:defi}, which we recall here
\begin{align}\label{eq:energy_fun}
    \calF(\rho) = \int_\R V(x)\rho(x)\,dx + \frac{1}{2}\iint_{\R\times\R} W(x-y) \rho(x)\rho(y)\,dx\, dy, %\int \psi(\rho)\, dx + 
\end{align}
and the corresponding force
\begin{align}\label{eq:force}
    \sfF_\rho(t,x) = V'(x) + \int_\R W'(x-y)\,\rho_t(y)\,dy\,.
\end{align}

In order to handle the general situation when $W$ only satisfies (A-Wm), we will need to consider an auxiliary driving energy
\begin{align}\label{eq:aux-energy}
    \widehat\calF_h(\rho) = \begin{cases}\displaystyle
        \int_\R V(x)\rho(x)\,dx + \frac{1}{2}\sum_{i=0}^{N-1}\sum_{j\ne i} h^2 \intbar_{K_i}\intbar_{K_j} W(x-y)\,dx\,dy & \displaystyle \text{for $\rho = \sum_{i=0}^{N-1} \frac{h}{|K_i|} \bbmI_{K_i}$\,,} \\
        +\infty & \text{otherwise.}
    \end{cases}
\end{align}
By taking the temporal derivative of the auxiliary driving energy \eqref{eq:aux-energy} along the curve $t\mapsto \hat\rho^h$, and using the continuity equation \eqref{eq:cont-eq}, we arrive at the identity
\begin{align}\label{eq:auxiliary-derivative}
    \frac{d}{dt} \widehat\calF_h(\hat\rho^h(t)) &=  \int_\R \sfF^{h}_{\hat\rho^h}(t,x)\, \hat\jmath_t^h(dx)\,,
\end{align}
where the corresponding auxiliary force is given by
\[
    \sfF^{h}_{\rho}(t,x) := \sum_{i=0}^{N-1}\left(V'(x) + \int_{\R\setminus K_i(t)} W'(x-y)\,\rho(t,y)\,dy\right)\bbmI_{K_i(t)}(x)\,.
\]
Recalling the continuous dual dissipation potential
\[
    \calR^*(\rho,\xi) = \frac{1}{2}\int_\R |\xi(x)|^2\, \vartheta(\rho(x))\,dx
\]
that was introduced in Section~\ref{sec:method}, we combine the results of Sections~\ref{sec:driving-energy}, \ref{sec:action-functional} and \ref{sec:fisher-information} below to obtain the following inequality:
\begin{align*}
    \int_s^t \Bigl\{ \langle \varphi(r,\cdot),\hat\jmath_r^h\rangle -  \calR^*(\hat\rho_r^h,\varphi(r,\cdot))\Bigr\}\, dr + \int_s^t \calR^*(\hat\rho_r^h, -\sfF^{h}_{\hat\rho^h}(r,\cdot))\,dr + \widehat\calF_h(\hat\rho_t^h) - \widehat\calF_h(\hat\rho_s^h) \le O(h)\,,
\end{align*}
which holds for any interval $(s,t)\subset[0,T]$ and any $\varphi\in C_c^{0,1}([0,T]\times\R)$.

Passing to the limit inferior $h\to 0$, and consequently taking the supremum in $\varphi\in C_c^{0,1}([0,T]\times\R)$ results in the so-called {\em Energy-Dissipation inequality}:
\begin{align}\label{eq:EDI}
    \int_s^t \calR(\rho_r,j_r)\, dr + \int_s^t \calR^*(\rho_r, -\sfF_{\rho}(r,\cdot))\,dr + \calF(\rho_t) - \calF(\rho_s) \le 0\,.
\end{align}
The convergences are also justified in Sections~\ref{sec:driving-energy}, \ref{sec:action-functional} and \ref{sec:fisher-information}. However, when $\widehat\calF_h$ is used as driving energy, then the convergence provided in Section~\ref{sec:fisher-information} only holds under an additional assumption on the initial condition $\bar\rho$, namely (In2). Finally, in Section~\ref{sec:chain-rule}, we prove a chain rule to deduce the reverse inequality, and thereby establishing the Energy-Dissipation balance for any $(s,t)\subset[0,T]$:
\begin{align}\label{eq:EDB}\tag{EDB}
    \int_s^t \calR(\rho_r,j_r)\, dr + \int_s^t \calR^*(\rho_r, -\sfF_{\rho}(r,\cdot))\,dr + \calF(\rho_t) - \calF(\rho_s) = 0\,.
\end{align}

\begin{remark}\label{rem:W-free-energy}
    We point out at this point that when $W$ satisfies either (A-Wr) or (A-Wn), then we may use the energy \eqref{eq:energy_fun} in place of the auxiliary energy \eqref{eq:aux-energy}. See Remark~\ref{rem:fisher-information} below for more details.
\end{remark}

\subsubsection{Driving energy}\label{sec:driving-energy}

% We define a corresponding free energy acting on reconstructed densities, given by
% \begin{align*}
% 	\widehat\calF_h(\hat\rho^h) &:= \sum_i h\intbar_{K_i} V(x)\,dx + \frac{1}{2}\sum_{i,j\ne i}h^2\intbar_{K_i}\intbar_{K_j} W(x-y)\,dx dy \\
% 	&= F(\hat\rho^h) + 
% \end{align*}

%Inserting measures of the form $\hat\rho^h$ into $\calF$, we find
%\begin{align*}
%	\calF(\hat\rho^h) &= \sum_i h\intbar_{K_i} V(x)\,dx + \frac{1}{2}\sum_{i,j}h^2\intbar_{K_i}\intbar_{K_j} W(x-y)\,dx dy \\
%	&=: \widehat\calF_h(\hat\rho^h) + \frac{1}{2}\sum_i h^2\intbar_{K_i}\intbar_{K_i} W(x-y)\,dxdy\,.
%\end{align*}
% \begin{remark}
% 	Notice that $\widehat\calF_h$ is defined without the diagonal term in the double summation in $\calF(\hat\rho^h)$, in order to avoid self-interaction.	
% \end{remark}

\begin{lemma}\label{lem:energy-convergence}
    There exists a constant $c_\calF>0$, independent of $h\in(0,1)$, such that
	\begin{align}\label{eq:energy-bound}
		\sup\nolimits_{t\in[0,T]}|\widehat\calF_h(\hat\rho_t^h) - h\calF_h(\x^h(t))| \le c_\calF h\qquad\text{ for all $h\in(0,1)$}\,,
	\end{align}
	where $\calF_h$ is the discrete energy functional defined in \eqref{discrete_ener_fun}.
	
	Furthermore, for every $t\in[0,T]$,
	\[
	    \calF(\rho_t) = \lim_{h\to 0} \widehat\calF_h(\hat\rho_t^h)\,.
	\]
\end{lemma}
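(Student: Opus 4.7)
Both claims follow from first-order expansions, kept uniform in $h$ by the support bound from Theorem~\ref{thm:existence-DPA}(ii): there exists $L>0$ with $\bigcup_{h\in(0,1),\,t\in[0,T]}\mathrm{supp}(\hat\rho_t^h)\subset[-L,L]$. On the compact set $[-2L,2L]$ the hypotheses (A-V) and (A-Wr)/(A-Wn)/(A-Wm) all make $V$ and $W$ Lipschitz, with constants we denote $\Lip(V)$, $\Lip(W)$.

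\textbf{Step 1 (uniform estimate).} Using $\rho_i^h=h/|K_i|$, I write
\[
\widehat\calF_h(\hat\rho_t^h)-h\calF_h(\x^h(t))=\sum_{i=0}^{N-1}\frac{h}{|K_i|}\int_{K_i}\bigl(V(x)-V(\x_i)\bigr)dx+\frac{1}{2}\sum_{i=0}^{N-1}\sum_{j\ne i}h^2\intbar_{K_i}\intbar_{K_j}\bigl[W(x-y)-W(\x_i-\x_j)\bigr]dx\,dy.
\]
Bounding $|V(x)-V(\x_i)|\le\Lip(V)|K_i|$ on $K_i$ gives $\Lip(V)h\sum_i|K_i|\le 2L\Lip(V)h$ for the first sum. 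Similarly $|W(x-y)-W(\x_i-\x_j)|\le\Lip(W)(|K_i|+|K_j|)$ combined with symmetry, $\sum_i|K_i|\le 2L$, and $(N-1)h\le m$ bounds the second sum by $2Lm\Lip(W)h$. These estimates are independent of $t\in[0,T]$, proving \eqref{eq:energy-bound}.

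\textbf{Step 2 (pointwise limit).} Fix $t\in[0,T]$. For the external potential, picking $\chi\in C_c(\R)$ with $\chi\equiv 1$ on $[-L,L]$ turns $V$ into $V\chi\in C_0(\R)$ on the relevant supports, so the weak-$*$ convergence in \eqref{eq:convergences} yields $\int V\,d\hat\rho_t^h\to\int V\,d\rho_t$. For the interaction part, the identity $\rho_i^h\rho_j^h|K_i||K_j|=h^2$ delivers
\[
\sum_{i,j=0}^{N-1}h^2\intbar_{K_i}\intbar_{K_j}W(x-y)\,dx\,dy=\iint_{\R\times\R}W(x-y)\,d\hat\rho_t^h(x)\,d\hat\rho_t^h(y),
\]
so the restriction $i\ne j$ costs only the diagonal $\tfrac12\sum_i h^2\intbar_{K_i}\intbar_{K_i}W(x-y)\,dx\,dy$, bounded in modulus by $\tfrac12\,m\,\|W\|_{L^\infty([-L,L])}h=O(h)$ because $Nh=m$. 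Finally, the uniform support bound upgrades $\hat\rho_t^h\rightharpoonup^*\rho_t$ to the product convergence $\hat\rho_t^h\otimes\hat\rho_t^h\rightharpoonup^*\rho_t\otimes\rho_t$; testing against $W$ (continuous on $[-2L,2L]$, multiplied by a product cutoff to lie in $C_0$) gives the limit $\tfrac12\iint W(x-y)\,d\rho_t(x)\,d\rho_t(y)$.

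\textbf{Main obstacle.} The computations are routine once uniform supports are in hand; the genuine content is that Theorem~\ref{thm:existence-DPA}(ii) reduces $W$ to a Lipschitz function on a fixed compact set, which accommodates the Newtonian (A-Wn) and mild (A-Wm) cases despite the possible lack of smoothness at~$0$. Without this uniform-in-$h$ support bound one would need to track the dependence of constants on $\x_N^h(t)-\x_0^h(t)$ explicitly, complicating Step~1.
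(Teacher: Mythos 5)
Your proposal is correct and takes essentially the same route as the paper's proof: the identical decomposition of $\widehat\calF_h(\hat\rho_t^h)-h\calF_h(\x^h(t))$ into $V$- and $W$-increments controlled by first-order (Lipschitz, in the paper linear-growth of $V'$, $W'$) estimates made uniform in $t$ and $h$ by the support bound, followed by removing the $O(h)$ diagonal term and passing to the limit via a cutoff and the weak-$*$ convergence of $\hat\rho_t^h$ (and of its product). The only cosmetic point is that the diagonal bound should use $\|W\|_{L^\infty([-2L,2L])}$, since $x-y$ with $x,y\in[-L,L]$ ranges over $[-2L,2L]$; this changes nothing.
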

\begin{proof}
	We begin by rewriting $\widehat\calF_h(\hat\rho^h)$ in the form
	\begin{align*}
	\widehat\calF_h(\hat\rho^h) &= \sum_{i=0}^{N-1} h\int_0^1 V\bigl(\x_i + \lambda|K_i| \bigr)\,d\lambda + \frac{1}{2}\sum_{i=0}^{N-1}\sum_{j\ne i} h^2\iint W \bigl(\x_i - \x_j + \bigl(\lambda|K_i|-\tau|K_j|\bigr)\bigr)\,d\tau d\lambda \\
	&= h\calF_h(\x) + \sum_{i=0}^{N-1} h \int_0^1 \Bigl[V(\x_i + \lambda|K_i|) - V(\x_i)\Bigr]\,d\lambda\\
	&\qquad + \frac{1}{2}\sum_{i=0}^{N-1}\sum_{j\ne i} h^2\iint \Bigl[W \bigl(\x_i - \x_j + \bigl(\lambda|K_i|-\tau|K_j|\bigr)\bigr) - W(\x_i-\x_j)\Bigr]\,d\tau d\lambda\,.
\end{align*}
Owing to the linear growth assumption on $V$ and $W$, we obtain
\begin{align*}
    |V(\x_i + \lambda|K_i|) - V(\x_i)| &\le |K_i|\int_0^1 \lambda V'(\x_i+\tau\lambda|K_i|)\,d\tau \le c_V |K_i| \bigl(1 + |\x_i| + |\Omega|\bigr)\,,
\end{align*}
and 
\begin{align*}
    &\Bigl|W \bigl(\x_i - \x_j + \bigl(\lambda|K_i|-\tau|K_j|\bigr)\bigr) - W(\x_i-\x_j)\Bigr| \\
    &\hspace{6em} \le \bigl(\lambda|K_i|-\tau|K_j|\bigr)\int_0^1 W'\bigl(\x_i-\x_j + \sigma\bigl(\lambda|K_i|-\tau|K_j|\bigr)\bigr)\bigr)\,d\sigma \\
    &\hspace{6em} \le c_W\bigl(|K_i| + |K_j|\bigr)\bigl(1 + 3 \,|\Omega| \bigr)\,.
\end{align*}
Therefore, we have that
\[
    |\widehat\calF_h(\hat\rho^h) - h\calF_h(\x)| \le c_1 h\,,
\]
for some constant $c_1>0$, independent of $h\in(0,1)$. Since these estimates are independent of $t\in[0,T]$, together, they yield the asserted estimate.

As for the convergence, we begin by noticing that
\[
    |\calF(\hat\rho^h) - \widehat\calF_h(\hat\rho^h)| \le \frac{1}{2}\sum_{i=0}^{N-1} h^2 \intbar_{K_i}\intbar_{K_i} |W(x-y)|\,dx\,dy \le c_2 h\,,
\]
for some constant $c_2>0$, independent of $h\in(0,1)$. On the other hand,
\[
    \calF(\hat\rho^h) = \int_\Omega V(x)\hat\rho^h(x)\,dx + \frac{1}{2}\iint_{\Omega\times\Omega} W(x-y) \hat\rho^h(x)\hat\rho^h(y)\,dx\, dy\,,\qquad\text{for all $h\in(0,1)$}\,.
\]
Therefore, we may take a cut-off function $\chi\in C_c(\R)$ with $\chi\equiv 1$ on $\Omega$, and write
\[
     \calF(\hat\rho^h) = \int_\R \chi(x)V(x)\hat\rho^h(x)\,dx + \frac{1}{2}\iint_{\R\times\R} \chi(x)\chi(y)W(x-y) \hat\rho^h(x)\hat\rho^h(y)\,dx\, dy\,.
\]
Since $\hat\rho_t^h\rightharpoonup^*\rho_t$ weakly-$*$ in $\calM^+(\R)$ as $h\to0$ for every $t\in[0,T]$, and the integrands are continuous functions with compact support, we may pass $h\to0$ to obtain the desired limit.
\end{proof}

\subsubsection{Action functional}\label{sec:action-functional}

\begin{lemma}\label{lem:action}
	For any $(s,t)\subset[0,T]$ and $\varphi\in C_c^{0,1}([0,T]\times\R)$:
	\begin{align}\label{eq:action-bound}
		\int_s^t \langle \varphi(r,\cdot),\hat\jmath_r^h\rangle - \calR^*(\hat\rho_r^h,\varphi(r,\cdot))\, dr = h\int_s^t\calR_h(\x^h(r),\j^h(r))\,dr + O(h)|_{h\to 0}\,,
	\end{align}
	where $\calR_h$ is the discrete dissipation potential given in \eqref{eq:discrete-dissipation-potential}.
	
	Moreover, the following limit holds:
	\[
	    \int_s^t \langle \varphi(t,\cdot),j_t\rangle -  \calR^*(\rho_r,\varphi(r,\cdot))\, dr = \lim_{h\to 0} \int_s^t \langle \varphi(r,\cdot),\hat\jmath_r^h\rangle - \calR^*(\rho_r,\varphi(r,\cdot))\, dr\,.
	\]
\end{lemma}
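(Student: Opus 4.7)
The plan for \eqref{eq:action-bound} is to reduce the continuous functional on the left-hand side to its discrete analog via first-order expansions in each cell $K_i(r)$, and then to invoke Fenchel--Young duality in the discrete setting. On $K_i(r)$ the flux $\hat\jmath^h_r$ is the affine interpolation $\rho^h_i(r)\,u^h_i(r,\cdot)$ between $\dot\x^h_i$ and $\dot\x^h_{i+1}$. Replacing $\varphi(r,x)$ by $\varphi(r,\x_i^h(r))$ on $K_i(r)$ and exploiting $\rho^h_i|K_i|=h$, together with the uniform velocity bound $\|\dot\x^h\|_\infty\le C$ from Lemma~\ref{lem:extend-DPA} and $\sum_i|K_i|\le|\x_N^h-\x_0^h|$ from Theorem~\ref{thm:existence-DPA}(ii), one obtains
\[
    \int_\R \varphi(r,x)\,\hat\jmath^h_r(x)\,dx = h\sum_{i}\varphi(r,\x_i^h(r))\,\dot\x_i^h(r) + O(h),
\]
uniformly in $r\in(0,T)$, where the intermediate index-shift $\tfrac12(\dot\x_i+\dot\x_{i+1})\rightsquigarrow\dot\x_i$ costs an additional $O(h)$ controlled by $h\,\Lip(\varphi)\,C\,|\x_N^h-\x_0^h|$. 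An analogous expansion for $\varphi^2$ yields
\[
    \calR^*(\hat\rho^h_r,\varphi(r,\cdot)) = \frac{h}{2}\sum_i \beta(\rho^h_i(r))\,\varphi(r,\x_i^h(r))^2 + O(h).
\]
Splitting $\varphi^2=(\varphi^+)^2+(\varphi^-)^2$ and shifting the weight of one summand from $\beta(\rho^h_i)$ to $\beta(\rho^h_{i-1})$ to match the discrete dual dissipation \eqref{eq:discrete-dual-dissipation} creates the further error $\tfrac{h}{2}\Lip(\beta)\|\varphi\|_\infty^2\sum_i|\rho^h_i-\rho^h_{i-1}|$, which is $O(h)$ by the uniform BV bound of Lemma~\ref{lem:totalvariation}. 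Setting $\zeta^h_i(r):=\varphi(r,\x_i^h(r))$, the above combine into
\[
    \int_s^t\bigl\{\langle\varphi,\hat\jmath^h_r\rangle -\calR^*(\hat\rho^h_r,\varphi)\bigr\}\,dr = h\int_s^t\bigl[\langle\zeta^h,\dot\x^h\rangle - \calR^*_h(\x^h,\zeta^h)\bigr]\,dr + O(h),
\]
and the discrete Fenchel--Young inequality $\langle\zeta,\j\rangle - \calR^*_h(\x,\zeta)\le\calR_h(\x,\j)$, with equality at $\zeta=-\sfF^h(\x)$, bounds the bracketed integrand from above by $\calR_h(\x^h(r),\j^h(r))$, delivering \eqref{eq:action-bound}.

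For the limit, the two pieces on the LHS pass to the limit separately. The flux term uses the weak-$*$ convergence $\int_\cdot\hat\jmath^h_r\,dr\rightharpoonup^*\int_\cdot j_r\,dr$ in $\calM([0,T]\times\R)$ recorded in \eqref{eq:convergences}: testing against smooth cut-offs $\chi_\varepsilon(r)\varphi(r,x)\in C_c([0,T]\times\R)$ that approximate $\bbmI_{(s,t)}(r)\varphi(r,x)$, sending $h\to 0$ at fixed $\varepsilon$, and then $\varepsilon\to 0$, with the uniform integrability of $\{r\mapsto|\hat\jmath^h_r|(\R)\}_h$ from Lemma~\ref{lem:reconstruction-continuity} controlling the error on $\{|\chi_\varepsilon-\bbmI_{(s,t)}|\neq 0\}$, yields the convergence of this term to $\int_s^t\langle\varphi,j_r\rangle\,dr$. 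The $\calR^*$ term uses the a.e.\ and $L^1$ convergence $\hat\rho^h\to\rho$ combined with the Lipschitz continuity and uniform boundedness $\vartheta\le\vartheta(M)$ of the mobility, so that $\vartheta(\hat\rho^h)\to\vartheta(\rho)$ in $L^1([0,T]\times\R)$; dominated convergence tested against the bounded $\varphi^2$ then gives $\int_s^t\calR^*(\hat\rho^h_r,\varphi)\,dr\to\int_s^t\calR^*(\rho_r,\varphi)\,dr$.

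The main technical obstacle lies in the first part, namely the careful bookkeeping of the several $O(h)$ error sources — the first-order remainders of $\varphi$ and $\varphi^2$, the forward/backward index-shift, and the BV-controlled $\beta$-shift — each of which must be packaged as $h$ times either a uniformly bounded $\sum_i|K_i|$ or a uniformly bounded total variation of $\rho^h_r$. Once this bookkeeping is done, the discrete Fenchel--Young inequality closes the argument at the discrete level, and the limit passage is comparatively routine given the compactness from Theorem~\ref{thm:compactness}.
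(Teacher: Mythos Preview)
Your argument is correct and arrives at the same inequality the paper uses, but the treatment of the $\calR^*$ term takes a different route. You expand $\varphi^2$ at a single endpoint $\x_i$ in each cell to obtain $\tfrac{h}{2}\sum_i\beta(\rho_i^h)\varphi(\x_i)^2$, and then pay a BV-controlled price $\tfrac{h}{2}\Lip(\beta)\|\varphi\|_\infty^2\sum_i|\rho_i^h-\rho_{i-1}^h|$ to shift $\beta(\rho_i^h)$ to $\beta(\rho_{i-1}^h)$ on the negative part and match the discrete potential \eqref{eq:discrete-dual-dissipation}. The paper instead splits $|\varphi|^2=|\varphi^+|^2+|\varphi^-|^2$ \emph{before} the Taylor step and expands $(\varphi)^+$ around the left endpoint $\x_i$ but $(\varphi)^-$ around the right endpoint $\x_{i+1}$; after a harmless reindexing this produces $h\calR_h^*(\x^h,\xi)$ directly, with error controlled only by $h\sum_i|K_i|=O(h)$. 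Thus the paper avoids invoking Lemma~\ref{lem:totalvariation} here altogether. Your route is heavier in that it borrows the BV estimate, but since that estimate is already available and needed elsewhere, the cost is purely aesthetic. The remaining steps (the flux expansion, the discrete Fenchel--Young inequality, and the limit passage via the convergences in \eqref{eq:convergences}) coincide with the paper's.
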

\begin{proof}
The limit clearly holds due to the convergences assumed for $(\hat\rho^h,\hat\jmath^h)$. As for the estimate, we simply make use of Taylor's formula to deduce
\begin{align*}
    \int_\R |\varphi(x)|^2 \vartheta(\hat\rho^h(x))\,dx &= h\sum_{i=0}^{N-1} \beta(\rho_i^h)\left\{\int_0^1 |(\varphi(\x_i^\lambda))^+|^2\,d\lambda 
    + \int_0^1 |(\varphi(\x_i^\lambda))^-|^2\,d\lambda\right\} \\
    &= h\sum_{i=0}^{N-1} \beta(\rho_i^h) \Bigl\{|(\varphi(\x_i))^+|^2 + |(\varphi(\x_{i+1}))^-|^2\Bigr\} + O(h) \\
    &= h\sum_{i=0}^{N-1} \Bigl\{ \beta(\rho_i^h) |(\varphi(\x_i))^+|^2 + \beta(\rho_{i-1}^N) |(\varphi(\x_i))^-|^2\Bigr\} + O(h) \\
    &= h\calR_h^*(\x^h,\xi) + O(h)\,,\qquad \xi_i = \varphi(\x_i)\,,
\end{align*}
where $\x_i^\lambda = (1-\lambda)\x_i + \lambda \x_{i+1}$, and we used the estimates
\begin{align*}
	(\varphi(\x_i^\lambda))^+ &= (\varphi(\x_i + \lambda|K_i|)^+ = (\varphi(\x_i))^+ + O(|K_i|)\,, \\
	(\varphi(\x_i^\lambda))^- &= (\varphi(\x_{i+1} - (1-\lambda)|K_i|)^- = (\varphi(\x_{i+1}))^- + O(|K_i|)\,.
\end{align*}
In a similar fashion we can estimate
\[
	\langle \varphi,\hat\jmath^h\rangle = h\sum_{i=0}^{N-1}  \varphi(\x_i)\,\j_i + O(h)\,.
\]
Since the estimates hold for almost every $t\in(0,T)$, we may put the terms together to obtain
\begin{align*}
	\langle \varphi(t,\cdot),\hat\jmath_t^h\rangle -  \frac{1}{2}\int_\R |\varphi(t,x)|^2 \vartheta(\hat\rho_t^h(x))\,dx &= h \bigl(\langle\xi,\j^h(t)\rangle - \calR_h^*(\x^h(t),\xi)\bigr) + O(h) \\
	&\le h\calR_h(\x,\j) + O(h)\,,
\end{align*}
where the inequality follows from the definition of the Legendre dual. Integrating over $(0,T)$ then yields the required estimate.
\end{proof}

\subsubsection{Fisher information}\label{sec:fisher-information}

In order to recover the required limit for the Fisher information when $W$ only satisfies (A-Wm), we will have to assume additionally that $\bar\rho$ satisfies (In2). Under this assumption, we have uniform control over the size $|K_i|$ of the intervals $K_i$. Indeed, Theorem~\ref{thm:upper-bound} gives the bound
\begin{align}\label{eq:upper-bound-2}
    \sup_{t\in[0,T]}\sup_{i=0,\ldots,N-1}|K_i|(t) \le e^{\mu T}\frac{h}{\sigma}\,,
\end{align}
for some constant $\mu>0$, independent of $h\in(0,1)$. In particular, we have the following result.

\medskip

We note that this is the only place throughout this section where (In2) is assumed. We believe, however, that this assumption may be removed, but would require a more careful construction of the initial datum for the DPA in the case when $\bar\rho$ has non-connected supports. 

\begin{lemma}\label{lem:fisher-information}
    There exists a constant $c_{\mathcal{D}}>0$, independent of $h>0$, such that
    \begin{align}\label{eq:fisher-information-bound}
       \int_0^T \calR^*(\hat\rho_t^h,-\sfF^{h}_{\hat\rho^h}(t,\cdot))\,dt 
    %   =  \int_0^T\int_\R \Bigl| \sfF^{h}_{\hat\rho^h}(t,x)\Bigr|^2 \vartheta(\hat\rho_t^h(x))\,dx\,dt 
       \le h\int_0^T\mathcal{R}_h^*(\x^h(t),-\sfF^h(\x(t)))\,dt + c_{\mathcal{D}} h\,,
    \end{align}
    where $\calR_h^*$ is the discrete dual dissipation given in \eqref{eq:discrete-dual-dissipation}.
    
    If $\bar\rho$ satisfies additionally (In2), then the following convergence holds for any $(s,t)\subset[0,T]$:
    \[
        \int_s^t \calR^*(\rho_t,-\sfF_{\rho}(t,\cdot))\,dt \le \lim_{h\to 0}\int_s^t \calR^*(\hat\rho_t^h,-\sfF^{h}_{\hat\rho^h}(t,\cdot))\,dt
        % \int_s^t\int_\R \Bigl| \sfF_{\rho}(r,x)\Bigr|^2 \vartheta(\rho_r(x))\,dx\,dr = \lim_{h\to 0} \int_s^t\int_\R \Bigl| \sfF^{h}_{\hat\rho^h}(r,x)\Bigr|^2 \vartheta(\hat\rho_r^h(x))\,dx\,dr\,.
    \]
\end{lemma}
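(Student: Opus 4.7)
The plan for the upper bound \eqref{eq:fisher-information-bound} starts by rewriting the continuous dual dissipation using $\vartheta(\rho_i^h)=(h/|K_i|)\beta(\rho_i^h)$:
\[
    \calR^*(\hat\rho^h_t,-\sfF^h_{\hat\rho^h}(t,\cdot)) = \frac{h}{2}\sum_{i=0}^{N-1}\beta(\rho_i^h)\intbar_{K_i}|\sfF^h_{\hat\rho^h}(t,x)|^2\,dx,
\]
which reduces the inequality to a per-cell comparison with the upwind combination $\beta(\rho_i^h)[(\f_i^-)^2+(\f_{i+1}^+)^2]$ inside $\calR_h^*$. Two preparatory facts are needed. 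First, on each $K_i$ the restricted map $x\mapsto\sfF^h_{\hat\rho^h}(t,x)$ is Lipschitz with a constant $L$ uniform in $i,h$: for $x\in K_i$ and $y\in\R\setminus K_i$ the shifted argument $x-y$ never crosses $0$, so the $W^{2,\infty}(\R\setminus\{0\})$-bound from (A-Wm) applies, just as in Lemma~\ref{lem:liploc_DPA}. Second, a midpoint-type quadrature gives $\sfF^h_{\hat\rho^h}(t,\x_i^+)=\f_i+\varepsilon_i$ with $\sum_i|\varepsilon_i|\le Ch$, obtained by replacing $\intbar_{K_j}W'(\x_i-y)\,dy$ with $W'(\x_i-\x_j)$ for each $j\ne i$ and summing $\sum_{j\ne i}h|K_j|\le m|\x_N-\x_0|$.

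The central step is a per-cell case analysis. If $\f_i\f_{i+1}\ge 0$, say both nonnegative, there are two sub-cases: either $\sfF^h_{\hat\rho^h}$ has an interior zero in $K_i$, in which case the Lipschitz estimate forces $|\f_i|,|\f_{i+1}|\le L|K_i|+O(h)$ and hence $\intbar_{K_i}|\sfF^h_{\hat\rho^h}|^2=O(|K_i|^2+h^2)$; or $\sfF^h_{\hat\rho^h}\ge 0$ throughout $K_i$, and the bound $\sfF^h_{\hat\rho^h}(x)\le\sfF^h_{\hat\rho^h}(\x_{i+1}^-)+L|K_i|$ yields
\[
    \intbar_{K_i}|\sfF^h_{\hat\rho^h}|^2 \le (\f_{i+1}^+)^2 + O(|K_i|+|K_i|^2+h),
\]
whose leading term is precisely the $\beta(\rho_i^h)(\f_{i+1}^+)^2$ contribution of the upwind formula. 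The symmetric argument for both nonpositive uses the left endpoint $\x_i^+$ and produces $\beta(\rho_i^h)(\f_i^-)^2$. In the sign-change case $\f_i\f_{i+1}<0$, the Lipschitz squeeze $|\f_i|+|\f_{i+1}|\le L|K_i|+O(h)$ applies directly and gives the same $O(|K_i|^2+h^2)$ bound. Multiplying by $h\beta(\rho_i^h)$ and summing, the principal terms assemble into $h\calR_h^*(\x^h,-\sfF^h(\x^h))$; the error sums are controlled by $h\sum_i|K_i|\le h|\x_N-\x_0|$ and $h\sum_i|K_i|^2\le h(\max_j|K_j|)|\x_N-\x_0|$, both of size $O(h)$ thanks to Theorem~\ref{thm:existence-DPA}(ii). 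Integrating over $(0,T)$ concludes \eqref{eq:fisher-information-bound}.

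For the liminf under (In2), Theorem~\ref{thm:upper-bound} supplies the matching upper bound $\max_i|K_i(t)|\le e^{\mu T}h/\sigma\to 0$ uniformly in $t$. Writing, for $x\in K_i(t)$,
\[
    \sfF_\rho(t,x)-\sfF^h_{\hat\rho^h}(t,x) = \int_{K_i(t)}W'(x-y)\hat\rho^h(t,y)\,dy + \bigl(W'\ast(\rho_t-\hat\rho^h_t)\bigr)(x),
\]
the first term is bounded by $\|W'\|_{L^\infty(\Omega-\Omega)}M|K_i(t)|\to 0$, while the second vanishes by the strong $L^1$-convergence $\hat\rho^h\to\rho$ together with the boundedness of $W'$ on the fixed compact set $\Omega-\Omega$. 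Hence $\sfF^h_{\hat\rho^h}\to\sfF_\rho$ pointwise a.e.\ on $(0,T)\times\Omega$. Combined with $\hat\rho^h\to\rho$ a.e.\ (obtained from \eqref{eq:convergences} by Fubini) and continuity of $\vartheta$, the nonnegative integrand $|\sfF^h_{\hat\rho^h}|^2\vartheta(\hat\rho^h)$ converges pointwise a.e.\ to $|\sfF_\rho|^2\vartheta(\rho)$, and Fatou's lemma applied on $(s,t)\times\Omega$ yields the stated liminf inequality. The main obstacle lies in the per-cell analysis for the first part: a naive splitting $|\sfF^h_{\hat\rho^h}|^2=(\sfF^h_{\hat\rho^h})^{+2}+(\sfF^h_{\hat\rho^h})^{-2}$ would introduce ``wrong-side'' contributions such as $\beta(\rho_i^h)(\f_i^+)^2$ that have no counterpart in the upwind discrete functional and cannot be reassigned to neighbouring cells because the $\beta$-weights differ; the case analysis is engineered to avoid them by always comparing to the endpoint corresponding to the incoming motion, with the Lipschitz squeeze dispatching the sign-change regime directly into the $O(h)$ error.
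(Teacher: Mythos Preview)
Your argument is correct, but for the first inequality you have worked much harder than necessary, and ironically the ``naive'' splitting you dismiss at the end is exactly what the paper uses. The paper observes that, for $x\in K_i$, both
\[
    \bigl|\sfF^{h}_{\hat\rho^h}(t,x)-\f_i\bigr|\le c_1(|K_i|+h)
    \qquad\text{and}\qquad
    \bigl|\sfF^{h}_{\hat\rho^h}(t,x)-\f_{i+1}\bigr|\le c_1(|K_i|+h)
\]
hold simultaneously. Since $s\mapsto s^{\pm}$ is $1$-Lipschitz, one gets directly
\[
    \bigl((\sfF^{h}_{\hat\rho^h})^{-}\bigr)^2 \le (\f_i^{-})^2 + c_2(|K_i|+h),
    \qquad
    \bigl((\sfF^{h}_{\hat\rho^h})^{+}\bigr)^2 \le (\f_{i+1}^{+})^2 + c_2(|K_i|+h),
\]
using the uniform bound on $|\f_i|$ from Lemma~\ref{lem:extend-DPA}. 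Adding these two and summing over $i$ gives \eqref{eq:fisher-information-bound} immediately. The point you missed is that one is free to compare the negative part to the \emph{left} endpoint $\f_i$ and the positive part to the \emph{right} endpoint $\f_{i+1}$; the ``wrong-side'' term $\beta(\rho_i^h)(\f_i^{+})^2$ never appears. Your case analysis (same sign with/without interior zero, sign change) reaches the same conclusion but is redundant once this freedom is recognised. A small slip: you write $\sum_i|\varepsilon_i|\le Ch$, but what your quadrature estimate actually gives (and what your per-cell bounds use) is $|\varepsilon_i|\le Ch$ for \emph{each} $i$; the sum over $i$ is of order $m$, not $h$.

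For the liminf part under (In2) your route is essentially the same as the paper's: establish pointwise a.e.\ convergence $\sfF^{h}_{\hat\rho^h}\to\sfF_\rho$ using $\max_i|K_i|\to 0$ from Theorem~\ref{thm:upper-bound}, then pass to the limit in the integral. The paper organises the pointwise convergence via $\bbmI_{\R\setminus K^h}\to\bbmI_\R$ and two applications of dominated convergence, while you decompose the difference $\sfF_\rho-\sfF^{h}_{\hat\rho^h}$ explicitly and invoke Fatou; both are fine.
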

\begin{proof}
From the properties $V''\in L^\infty(\R)$ and $W''\in L^\infty(\R\setminus\{0\})$, we find for any $x\in K_i$,
\[
    \bigl|\sfF^{h}_{\hat\rho^h}(t,x) - \f_i\bigr|\le c_1\bigl(|K_i| + h\bigl),\qquad \bigl|\sfF^{h}_{\hat\rho^h}(t,x) - \f_{i+1}\bigr| \le c_1\bigl(|K_i| + h\bigl)\,,
\]
for some constant $c_1>0$, independent of $h\in(0,1)$. 
% depending only on $m$, $\|V''\|_{L^\infty(\R)}$, $\|W''\|_{L^\infty(\R\setminus\{0\})}$, and $|\Omega|$. 
In particular, we have
\begin{align*}
    \bigl|( \sfF^{h}_{\hat\rho^h}(t,x))^-\bigr|^2 
    %&= \bigl|( \sfF^{h}_{\hat\rho^h}(t,x))^- - \f_i^- + \f_i^-\bigr|^2 \\
    &\le \bigl|( \sfF^{h}_{\hat\rho^h}(t,x))^- - \f_i^- \bigr|^2 + 2\bigl|( \sfF^{h}_{\hat\rho^h}(t,x))^- - \f_i^- \bigr||\f_i^-| + |\f_i^-|^2 \\
    &\le c_2\bigl(|K_i| + h\bigr) + |\f_i^-|^2\,,
\end{align*}
for some constant $c_2>0$, independent of $h\in(0,1)$. Similarly, we deduce
\[
    \bigl|( \sfF^{h}_{\hat\rho^h}(t,x))^+\bigr|^2 \le c_2\bigl(|K_i| + h\bigr) + |\f_{i+1}^+|^2\,,
\]
with the same constant $c_2$ as above. Consequently, we find that
\begin{align*}
    \int_\R \Bigl| \sfF^{h}_{\hat\rho^h}(t,x)\Bigr|^2 \vartheta(\hat\rho_t^h(x))\,dx &= \int_\R \Bigl|( \sfF^{h}_{\hat\rho^h}(t,x))^-\Bigr|^2 \vartheta(\hat\rho_t^h(x))\,dx + \int_\R \Bigl|( \sfF^{h}_{\hat\rho^h}(t,x))^+\Bigr|^2 \vartheta(\hat\rho_t^h(x))\,dx \\
    &\le \sum_{i=0}^{N-1} h\beta(\rho_i^h(t))|\f_i^-|^2 + \sum_{i=0}^{N-1} h\beta(\rho_i^h(t))|\f_{i+1}^+|^2 + 2c_2\beta_{max}\bigl(m+ |\Omega|\bigr)h\\
    &= h\mathcal{R}_h^*(\x^h(t),-\sfF^h(\x(t))) + c_3h\,,
\end{align*}
for some constant $c_3>0$, independent of $h\in(0,1)$, which establishes \eqref{eq:fisher-information-bound} after integrating in $t$.

As for the convergence, let $t\in(0,T)$ and $x\in \text{interior}(\text{supp} (\rho_t))$ be arbitrary, but fixed. Then for $h_0\ll 1$ sufficiently small (which may depend on $x$), $x\in \text{interior}(\text{supp}(\hat\rho_t^h))$ for all $h\in(0,h_0)$. Further, let $\{K^h\}_{h\in(0,h_0)}$ be a family of sets, where $K^h\in \{K_i^h(t)\}_{i=0,\ldots,N-1}$ and such that $x\in K^h$ for every $h\in(0,h_0)$. We may then write
\[
    \sfF^{h}_{\hat\rho^h}(t,x) = V'(x) + \int_{\R\setminus K^h} W'(x-y)\,\hat\rho_t^h(y)\,dy\qquad\text{for any $h\in(0,h_0)$\,.}
\]
From the upper bound \eqref{eq:upper-bound-2}, we find that $|K^h|\to 0$ for $h\to 0$, and hence, $\bbmI_{\R\setminus K^h} \to \bbmI_\R$ almost surely as $h\to0$. Along with the almost sure convergence of $\hat\rho^h\to\rho$, and the estimate
\[
    \int_{\R\setminus K^h} |W'(x-y)|\,\hat\rho_t^h(y)\,dy \le c_W\bigl(1 + |\Omega|\bigr) \int_\Omega \,\hat\rho_t^h(y)\,dy\,,
\]
we can then conclude that
\[
    \sfF^{h}_{\hat\rho^h}(t,x) \to \sfF_\rho(t,x)\quad\text{for $\rho$-a.e.\ $(t,x)\in (0,T)\times\R$\quad as $h\to0$}\,,
\]
by means of the Dominated Convergence Theorem. Along with the bound
\[
    |\sfF^{h}_{\hat\rho^h}(t,x)| \le c_{\sfF}\qquad\text{for every $(t,x)\in(0,T)\times\R$\,,}
\]
for some constant $c_{\sfF}>0$, independent of $h\in(0,1)$, and the almost sure convergence of $\vartheta(\hat\rho^h)\to\vartheta(\rho)$, we may apply the Dominated Convergence Theorem again to obtain the asserted convergence.
\end{proof}

\begin{remark}\label{rem:fisher-information}
    We note that if $W$ satisfies (A-Wr) or (A-Wn), we can simply consider the driving energy $\calF$ \eqref{eq:energy_fun}, and hence the corresponding force $\sfF_\rho$ \eqref{eq:force}. In this case, estimate \eqref{eq:fisher-information-bound} holds true with $\sfF_\rho$ instead of $\sfF_\rho^h$, and assumption (In2) on $\bar\rho$ is not required to obtain the limit provided in Lemma~\ref{lem:fisher-information}. 
    
    Indeed, when $W$ satisfies (A-Wn) (cf.\ Remark~\ref{rem:Newtonian-Lipschitz}), one has the identity
    \begin{align*}
        \sfF_{\hat\rho^h}(t,x) -\f_i = V'(x) - V'(\x_i)\, \pm\, h\frac{x-\x_{i-1}}{|K_i|}\,,\qquad\text{for any $x\in K_i$}\,,
    \end{align*}
    from which one obtains the necessary bounds.
\end{remark}

\subsubsection{Chain rule}\label{sec:chain-rule}
Here, we prove a chain rule for a large class of pairs $(\rho,j)\in\mathcal{CE}(0,T)$, which establishes the reverse inequality for the Energy-Dissipation inequality \eqref{eq:EDI}.

\begin{lemma}[Chain rule]\label{lem:chain-rule}
    Let $(\mu,\xi)\in\mathcal{CE}(0,T)$ be a pair such that 
    \[
     \mu_t\in L^1(\R)\quad\text{and\quad $\text{supp}(\rho_t)\subset \Omega$\quad for all $t\in[0,T]$}\,,
    \]
    where $\Omega\subset\R$ is a bounded domain, and that
    \[
         \int_0^T \int_{\R} \left\{ \left|\frac{d\xi_r}{d\mathfrak{m}_r} \right|^2 + \bigl|\sfF_\mu(t,x)\bigr|^2\right\}\mathfrak{m}_t(dx)\,dr <\infty\,,\qquad \mathfrak{m}_t(dx) = \vartheta(\mu_t(x))\,dx\,.
    \]
    Then the map $(0,T)\ni t\mapsto \calF(\mu_t)\in\R$ is absolutely continuous, and for any $(s,t)\subset[0,T]$:
    \[
        \frac{1}{2}\int_s^t \int_{\R} \left\{ \left|\frac{d\xi_r}{d\mathfrak{m}_r} \right|^2 + \bigl|\sfF_\mu(t,x)\bigr|^2\right\}\mathfrak{m}_t(dx)\,dr + \calF(\mu_t)\ge \calF(\mu_s)\,.
    \]
\end{lemma}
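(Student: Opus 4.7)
The plan is to establish the equality chain rule
\begin{equation}\label{eq:plan-chain-identity}
\calF(\mu_t) - \calF(\mu_s) = \int_s^t \int_\R \sfF_\mu(r,x)\, \xi_r(dx)\, dr, \qquad 0 \le s \le t \le T,
\end{equation}
from which the lemma follows easily. Indeed, combining \eqref{eq:plan-chain-identity} with the pointwise Young inequality
\[
\Bigl|\int_\R \sfF_\mu(r,x)\,\xi_r(dx)\Bigr| = \Bigl|\int_\R \sfF_\mu\,\tfrac{d\xi_r}{d\mathfrak{m}_r}\,d\mathfrak{m}_r\Bigr| \le \tfrac{1}{2}\int_\R \Bigl(|\sfF_\mu|^2 + \bigl|\tfrac{d\xi_r}{d\mathfrak{m}_r}\bigr|^2\Bigr) d\mathfrak{m}_r,
\]
and the standing integrability assumption, one obtains both the absolute continuity of $r\mapsto \calF(\mu_r)$ and the stated inequality $\calF(\mu_s)-\calF(\mu_t)\le \tfrac{1}{2}\int_s^t\!\int(\,|\sfF_\mu|^2+|\tfrac{d\xi_r}{d\mathfrak{m}_r}|^2\,)d\mathfrak{m}_r\,dr$.

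For the external-potential part, the argument is direct: since $\xi_r\ll \mathfrak{m}_r$ and $\mathfrak{m}_r$ is supported where $\mu_r>0$, hence in $\Omega$, the flux $\xi_r$ is concentrated on $\Omega$ for a.e.\ $r$. Choosing a cutoff $\chi\in C_c^\infty(\R)$ with $\chi\equiv 1$ on a neighborhood of $\Omega$, the truncated potential $\tilde V := V\chi\in \Lip_b(\R)$ agrees with $V$ on $\Omega$ with $\partial_x\tilde V = V'$ there; testing Definition~\ref{def:cont-eq} against $\tilde V$ immediately yields $\int V\,d\mu_t-\int V\,d\mu_s = \int_s^t\!\int V'(x)\,\xi_r(dx)\,dr$.

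The interaction part is the main obstacle, because $W$ may fail to be $C^1$ at the origin (Newtonian and mild cases), so the pointwise product rule cannot be applied directly to $r\mapsto \iint W(x-y)\mu_r\otimes\mu_r$. My strategy is to regularize via joint space--time mollification: for standard mollifiers $\eta^\epsilon,\zeta^\delta\in C_c^\infty(\R)$, define $\mu^{\epsilon,\delta}(r,x) := (\zeta^\delta *_t \eta^\epsilon *_x \mu)(r,x)$ and $\xi^{\epsilon,\delta}(r,x)$ analogously. For $r\in(\delta,T-\delta)$, these are $C^\infty$ in $(r,x)$, have spatial supports in a slight enlargement $\Omega^\epsilon$ of $\Omega$, and satisfy $\partial_r\mu^{\epsilon,\delta}+\partial_x\xi^{\epsilon,\delta}=0$ pointwise (inherited from Definition~\ref{def:cont-eq}). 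For such smooth pairs, the product rule, the symmetry $W(x-y)=W(y-x)$, and an integration by parts (valid since $W*\mu^{\epsilon,\delta}_r\in C^1$ with bounded derivative $W'*\mu^{\epsilon,\delta}_r$ on $\Omega^\epsilon$) yield
\[
\tfrac{1}{2}\iint W(x-y)\,\mu^{\epsilon,\delta}_r(x)\mu^{\epsilon,\delta}_r(y)\,dx\,dy\,\Big|_{r=s'}^{r=t'} = \int_{s'}^{t'}\!\!\int_\R (W'*\mu^{\epsilon,\delta}_r)(x)\,\xi^{\epsilon,\delta}_r(x)\,dx\,dr
\]
for $\delta<s'<t'<T-\delta$.

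It remains to pass $\epsilon,\delta\to 0$ (and $s'\to s$, $t'\to t$). The left-hand side converges to $\tfrac{1}{2}\iint W(x-y)\mu_r(dx)\mu_r(dy)\bigr|_s^t$ by continuity of $W$ on $\Omega-\Omega$ and the weak-$*$ convergence $\mu^{\epsilon,\delta}_r\rightharpoonup^* \mu_r$ for a.e.\ $r$. For the right-hand side, I would use the uniform bound $|W'*\mu^{\epsilon,\delta}_r|\le c_W(1+\mathrm{diam}(\Omega^\epsilon))\,m$ on $\Omega^\epsilon$ (from the linear growth of $W'$ and $\|\mu_r\|_{L^1}=m$), together with $|\xi^{\epsilon,\delta}_r|(\R)\le |\xi_r|(\R)\in L^1_r(0,T)$, to dominate the integrand. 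Pointwise convergence $W'*\mu^{\epsilon,\delta}_r\to W'*\mu_r$ holds uniformly on $\Omega$ in the regular case, via the explicit formula \eqref{eq:newtonian-force} in the Newtonian case, and in the mild case via the $L^1$-convergence $\mu^{\epsilon,\delta}_r\to \mu_r$ combined with $\|W'\|_{L^\infty(\Omega-\Omega\setminus\{0\})}<\infty$ (so the a.e.\ jump of $W'$ at the origin is immaterial under the integral). Together with $\xi^{\epsilon,\delta}_r\rightharpoonup^* \xi_r$, dominated convergence produces $\int_s^t\!\int (W'*\mu_r)(x)\,\xi_r(dx)\,dr$, which added to the linear part yields \eqref{eq:plan-chain-identity}. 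The key technical point is precisely this uniform control of $W'*\mu^{\epsilon,\delta}_r$ in the non-smooth cases; thereafter the passage to the limit is standard.
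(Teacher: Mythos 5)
Your overall strategy (prove the exact chain-rule identity $\calF(\mu_t)-\calF(\mu_s)=\int_s^t\int_\R \sfF_\mu(r,x)\,\xi_r(dx)\,dr$ and conclude by Young's inequality) is the same as the paper's, but your regularization goes the other way: you mollify the pair $(\mu,\xi)$ in space--time and keep the possibly singular kernel $W$, whereas the paper keeps $(\mu,\xi)$ untouched and regularizes the kernel, replacing $W$ by a smooth $W^\varepsilon$ with uniform linear growth, observing that $\calF^\varepsilon$ is Lipschitz along the $W_1$-absolutely-continuous curve $t\mapsto\mu_t/m$, writing the identity for $\calF^\varepsilon$, and then letting $\varepsilon\to0$ by dominated convergence. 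The paper's route is cleaner precisely because the only object that varies is the kernel, and $(W^\varepsilon)'\to W'$ Lebesgue-a.e.\ is compatible with $\mu_r\in L^1(\R)$ and $\xi_r\ll\mathfrak{m}_r\ll dx$, so dominated convergence applies verbatim; no product of two simultaneously converging weak objects ever appears.

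In your version that product does appear, and this is where there is a genuine gap: to pass to the limit in $\int_{s'}^{t'}\!\int_\R (W'\ast\mu^{\epsilon,\delta}_r)\,\xi^{\epsilon,\delta}_r\,dx\,dr$ you invoke dominated convergence on the basis of a uniform bound for $W'\ast\mu^{\epsilon,\delta}_r$, pointwise convergence of this convolution, and weak-$*$ convergence $\xi^{\epsilon,\delta}_r\rightharpoonup^*\xi_r$. Dominated convergence does not cover this situation: a weak-$*$ convergent family of measures paired with integrands that converge only pointwise (and are not shown to converge uniformly, nor to be equicontinuous) need not converge to the expected limit. Moreover, your justification of the pointwise convergence rests on an $L^1$-convergence $\mu^{\epsilon,\delta}_r\to\mu_r$ which the time mollification does not deliver: the lemma only provides weak-$*$ continuity of $r\mapsto\mu_r$, not strong $L^1$ continuity in time, and no $L^\infty$ bound on $\mu_r$ is assumed here that would help with the jump of $W'$ at the origin in the Newtonian/mild cases. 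The gap is repairable --- e.g.\ by showing, for fixed $r$, uniform-in-$x$ convergence of $W'\ast\mu^{\epsilon,\delta}_r$ (splitting $W'$ into a Lipschitz part plus a jump part whose convolution is a cumulative distribution function, whose pointwise convergence to the continuous limit CDF upgrades to uniform convergence), combined with weak-$*$ convergence of $\xi^{\epsilon,\delta}_r$ for a.e.\ $r$ and a separate domination in time --- but as written the key limit step is not justified; alternatively, regularizing the kernel as the paper does avoids the difficulty altogether.
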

\begin{proof}
    From the continuity equation and the property $\int_0^T |\xi_t|(\R)\,dt$, one easily deduces that the curve $t\mapsto\eta_t:=\mu_t/m$ is absolutely continuous with respect to the 1-Wasserstein distance $W_1$. Now consider a regularization of $W^\varepsilon$ such that $W^\varepsilon$ converges to $W$ pointwise almost everywhere, and $W^\varepsilon$ satisfies the linear growth assumption
    \[
        |(W^\varepsilon)'(r)| \le c\bigl(1+|r|\bigr),\qquad |r|>0\,,
    \]
    for some constant $c>0$ independent of $\varepsilon>0$. It is not difficult to see that the functional
    \[
        \calF^\varepsilon(\rho) = \int_\Omega V(x)\,\rho(dx) + \iint_{\Omega\times\Omega} W^\varepsilon(x-y)\,\rho(dx)\,\rho(dy)
    \]
    is Lipschitz with respect to the 1-Wasserstein metric, and the identity
    \[
        \calF^\varepsilon(\mu_t) - \calF^\varepsilon(\mu_s) = \int_s^t\int_\Omega \left\{V'(x) + \int_{\Omega} (W^\varepsilon)'(x-y)\,\mu_r(dy)\right\} \xi_r(dx)\,dr\,,
    \]
    holds for any $(s,t)\subset[0,T]$. Invoking the Dominated Convergence Theorem, we may then pass to the limit to obtain
    \begin{align*}
        \calF(\mu_t) - \calF(\mu_s) &= \int_s^t\int_\Omega \sfF_\mu(r,x)\, \xi_r(dx)\,dr = - \int_s^t\int_\Omega \bigl(-\sfF_\mu(r,x)\bigr)\, \frac{d\xi_r}{d\mathfrak{m}_r}(x)\,\mathfrak{m}_r(dx)\,dr \\
        &\ge -\frac{1}{2} \int_s^t\int_{\R} \left\{ \left|\frac{d\xi_r}{d\mathfrak{m}_r} \right|^2 + \bigl|\sfF_\mu(r,x)\bigr|^2\right\}\mathfrak{m}_r(dx)\,dr\,,
    \end{align*}
  where the inequality follows from Young's inequality.
\end{proof}

\subsubsection{Proof of Theorem~\ref{mainthm:mild}}\label{sec:proof-B}
Let the assumptions of Theorem~\ref{mainthm:mild} hold, and consider a sequence of pairs $\{(\hat\rho^h,\hat\jmath^h)\}_{h\in(0,1)}$ satisfying the convergences given in \eqref{eq:convergences} towards some $(\rho,j)\in\mathcal{CE}(0,T)$. In this case, Lemmas \ref{lem:energy-convergence}, \ref{lem:action} and \ref{lem:fisher-information} apply. Putting the estimates given in those lemmas together, we arrive at the following inequality for any $(s,t)\subset[0,T]$ and $\varphi\in C_c^{0,1}([0,T]\times\R)$:
\begin{align*}
    &\int_s^t \langle \varphi(r,\cdot),\hat\jmath_r^h\rangle - \calR^*(\hat\rho_r^h,\varphi(r,\cdot))\, dr + \int_0^T \calR^*(\hat\rho_t^h,-\sfF^{h}_{\hat\rho^h}(r,\cdot))\,dr + \widehat\calF_h(\hat\rho_t^h) - \widehat\calF_h(\hat\rho_s^h) \\
    &\quad\le h\left\{\int_s^t\calR_h(\x^h(r),\j^h(r))\,dr+ \int_s^t\mathcal{R}_h^*(\x^h(r),-\sfF^h(\x(r)))\,dr + \calF_h(\x^h(t)) - \calF_h(\x^h(s))\right\} + O(h)\,.
\end{align*}
The first term on the right-hand side vanishes due to the discrete Energy-Dissipation balance \eqref{eq:discrete-EDB}. Taking the limit inferior $h\to 0$, and making use of Lemmas \ref{lem:energy-convergence}, \ref{lem:action} and \ref{lem:fisher-information} again, we obtain 
\begin{align}\label{eq:pre-EDI}
    \int_s^t \langle \varphi(r,\cdot),j_r\rangle - \calR^*(\rho_r,\varphi(r,\cdot))\, dr + \int_0^T \calR^*(\rho_t,-\sfF_{\rho}(t,\cdot))\,dt + \calF(\rho_t) - \calF(\rho_s)\le 0\,.
\end{align}
The first term on the left-hand side may be rewritten as
\[
    I(J,\varTheta;\varphi):=\iint_{(s,t)\times\R} \varphi(r,x)\,J(dr\,dx) - \frac{1}{2}  \iint_{(s,t)\times\R} |\varphi(r,x)|^2\, \varTheta(dr\,dx)\,,
\]
where $J=\int_{\cdot}\,j_t\,dt$ and $\varTheta = \int_{\cdot}\,\vartheta(\rho_t)\,dt$. Since the driving energies $\calF(\rho_t)$ are finite for every $t\in[0,T]$, and that $\calR^*(\rho_t,-\sfF_{\rho}(t,\cdot))$ is non-negative, we then find that
\[
    \mathscr{I}(s,t):=\sup\, \Bigl\{ I(J,\varTheta;\varphi)\,:\, \varphi\in C_c^{0,1}((s,t)\times\R)\Bigr\} <+\infty\,.
\]
The first observation from this fact, is that $J\ll \varTheta$, i.e.\ $J$ is absolutely continuous with respect to $\varTheta$. In particular, the Radon-Nikodym derivative $dJ/d\varTheta$ exists. Furthermore, due to density, the supremum may be taken over functions $\varphi\in C_c([0,T]\times\R)$. On the other hand, a simple computation yields
\[
    \iint_{(s,t)\times\R}  \varphi(r,x)\,J(dr\,dx) \le \|\varphi\|_{L^2((s,t)\times\R,\varTheta)}  \sqrt{2\mathscr{I}(s,t)}\qquad\text{for every $\varphi\in C_c((s,t)\times\R)$\,.}
\]
Since $\varTheta$ has a Lebesgue density on $(s,t)\times\R$, $\calC_c([0,T]\times\R)$ is dense in $L^2((s,t)\times\R,\varTheta)$, which consequently implies the estimate
\[
     \left\|\frac{dJ}{d\varTheta}\right\|_{L^2((s,t)\times\R,\varTheta)} \le \sqrt{2\mathscr{I}(s,t)}\quad\Longleftrightarrow\quad \frac{1}{2}\left\|\frac{dJ}{d\varTheta}\right\|_{L^2((s,t)\times\R,\varTheta)}^2 \le \mathscr{I}(s,t)\,.
\]
Altogether, we then obtain the Energy-Dissipation inequality \eqref{eq:EDI}.

Since the pair $(\rho,j)\in\mathcal{CE}(0,T)$ satisfies the assumptions of Lemma~\ref{lem:chain-rule}, we also obtain the chain rule \eqref{eq:intro-chainrule} along with the reverse inequality, thereby giving the Energy-Dissipation balance \eqref{eq:EDB}. The uniform upper bound $\rho(t,x)\le M$ for almost every $(t,x)\in[0,T]\times\R$ follows directly from the fact that the same bound holds uniformly in $h\in(0,1)$ for the sequence $\{\hat\rho^h\}_{h\in(0,1)}$.
\hfill\qedsymbol

\subsection{Entropy solutions} This section is devoted to proving  the convergence of the piecewise constant discrete density $\hat\rho^h$ to the entropy solutions of \eqref{eq:main} in the sense of Definition \ref{def:entropy_sol}. More precisely, we are going to show that the entropy condition is satisfied in the limit, namely, setting
\begin{align*}
\eta_c(s)&:=|s-c|,\qquad \eta'_c(s)=\sign(s-c),
\end{align*} 
we prove that for any constant $c>0$ and every $\varphi\in C_c^\infty([0,T)\times\R)$ with $\varphi\ge 0$,
\begin{align}\label{eq:entropy_disc}
    \begin{aligned}
       0&\leq \liminf_{h\to 0}\left\{ \int_\R \eta_c(\hat\rho_0^h)\,\varphi(0,\cdot)\,dx + \int_0^T\int_\R \eta_c(\hat{\rho}^h)\,\partial_t\varphi\,dx\,dt\right.\\
       &\hspace{6em}-\left.\int_0^T\int_\R\eta'_c(\hat\rho^h)\left[\left(\vartheta(\hat\rho^h)-\vartheta(c)\right)\sfF_{\hat\rho^h}\,\partial_x\varphi-\vartheta(c)\,\partial_x\sfF_{\hat\rho^h}\,\varphi\right] dx \,dt\right\},
    \end{aligned}
\end{align}
where for $W$ under the assumption (A-Wn), the corresponding force $\sfF_\rho$ takes the explicit form \eqref{eq:newtonian-force}

% if $W$ is under the assumption (A-Wr), or
% \begin{equation}\label{eq:entropy_disc_new_att}
%       0\leq \liminf_{h\to 0}\int_0^T\int_\R \eta_c(\hat{\rho}^h)\partial_t\varphi-\eta'_c(\hat\rho^h)\left[\left(\vartheta(\hat\rho^h)-\vartheta(c)\right)\sfF_{\hat\rho^h}\partial_x\varphi-\vartheta(c)\hat\rho^h\varphi\right] \,dx \,dt.
% \end{equation}
% and
% \begin{equation}\label{eq:entropy_disc_new_rep}
%       0\leq \liminf_{h\to 0}\int_0^T\int_\R \eta_c(\hat{\rho}^h)\partial_t\varphi-\eta'_c(\hat\rho^h)\left[\left(\vartheta(\hat\rho^h)-\vartheta(c)\right)\sfF_{\hat\rho^h}\partial_x\varphi+\vartheta(c)\hat\rho^h\varphi\right] \,dx \,dt.
% \end{equation}
% if $W$ is under the assumption (A-Wn), attractive in case of \eqref{eq:entropy_disc_new_att} or repulsive in \eqref{eq:entropy_disc_new_rep}. 

\medskip

To do so, we first provide some preliminary results.

\begin{lemma}\label{A1}
For any constant $c>0$ and for all $\varphi \in C_c^\infty(\left[0,T\right]\times\R)$ with $\varphi\geq 0$, we have that
\begin{equation*}
    \left|\sum_{i=0}^{N-1}A_i^1\right|\to 0\quad \mbox{as}\quad h\to 0\,,
\end{equation*}
where (dropping the $t$ variable)
\begin{equation}\label{eq:A1}
     A_i^1 := \int_0^T \sic\,\rho_i^h\left(\dot{\x}_{i+1}-\dot{\x}_{i}\right)\left(\intbar_{K_i} \varphi(x) \,dx -\varphi(\x_{i+1})\right) dt\,.
\end{equation}
\end{lemma}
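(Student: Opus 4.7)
The plan is to prove that $\bigl|\sum_{i=0}^{N-1}A_i^1\bigr| = O(h)$ as $h\to 0$, which immediately yields the claim. The argument rests on two observations: first, the prefactor $\rho_i^h(\intbar_{K_i}\varphi\,dx - \varphi(\x_{i+1}))$ is of order $h$, uniformly in $i$; second, the spatial variation of the velocity, $\sum_{i}|\dot\x_{i+1}-\dot\x_i|$, remains bounded uniformly in both $h\in(0,1)$ and $t\in[0,T]$.

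For the first observation, the mean value theorem applied to $\varphi(t,\cdot)$ gives $|\intbar_{K_i}\varphi(t,x)\,dx - \varphi(t,\x_{i+1})|\le \|\partial_x\varphi\|_{L^\infty}|K_i|(t)$. Combined with the identity $\rho_i^h|K_i|=h$ and $|\sic|\le 1$, this gives
\[
\Bigl|\sum_{i=0}^{N-1}A_i^1\Bigr| \le \|\partial_x\varphi\|_{L^\infty}\, h\int_0^T\sum_{i=0}^{N-1}|\dot\x_{i+1}(t)-\dot\x_i(t)|\,dt.
\]

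The heart of the proof is the second observation. For interior indices $1\le i \le N-2$, I would decompose, using \eqref{eq:particle_complete},
\[
\dot\x_{i+1}-\dot\x_i = -\bigl[\beta(\rho_{i+1}^h)\f_{i+1}^- - \beta(\rho_i^h)\f_i^-\bigr] - \bigl[\beta(\rho_i^h)\f_{i+1}^+ - \beta(\rho_{i-1}^h)\f_i^+\bigr].
\]
A standard splitting on each bracket isolates the force difference from the $\beta$-difference:
\[
|\beta(\rho_{i+1}^h)\f_{i+1}^- - \beta(\rho_i^h)\f_i^-| \le \beta_{max}|\f_{i+1}-\f_i| + \Lip(\beta)\,|\rho_{i+1}^h-\rho_i^h|\,\sup\nolimits_j|\f_j|,
\]
and analogously for the $\f^+$ bracket. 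Lemma~\ref{lem:liploc_DPA} bounds $|\f_{i+1}-\f_i|\le c_{\f}|K_i|$, while Lemma~\ref{lem:extend-DPA} ensures $\sup_j|\f_j|$ is uniformly bounded on $[0,T]$. Summing over $i$, the contributions involving $|K_i|$ telescope to $|\x_N-\x_0|$ (uniformly bounded by Theorem~\ref{thm:existence-DPA}(ii)), while the contributions involving $|\rho_{i+1}^h-\rho_i^h|$ are controlled by the uniform BV bound on $\hat\rho^h$ from Lemma~\ref{lem:totalvariation}. The boundary contributions at $i=0$ and $i=N-1$ fit the same pattern under the convention $\rho_{-1}^h=\rho_N^h=0$, or can be absorbed directly via the uniform bound on $|\dot\x_i|$ from Lemma~\ref{lem:extend-DPA}.

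This produces $\sup_{h,t}\sum_i|\dot\x_{i+1}-\dot\x_i| \le C$, and consequently $|\sum_i A_i^1|\le C'Th\to 0$. The main obstacle is precisely this uniform bound on the discrete spatial variation of the velocities: combining the Lipschitz continuity of $\beta$ (from (A-$\beta$)) with the Lipschitz bound on the discrete force (Lemma~\ref{lem:liploc_DPA}) and the BV estimate (Lemma~\ref{lem:totalvariation}) is essential. Without the BV bound, a naive estimate $|\dot\x_{i+1}-\dot\x_i|\le 2\sup_i|\dot\x_i|$ would only yield $O(Nh)=O(1)$, which is insufficient for the claim.
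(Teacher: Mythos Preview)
Your proposal is correct and follows essentially the same approach as the paper: both use $\rho_i^h|K_i|=h$ together with the mean value bound on $\intbar_{K_i}\varphi - \varphi(\x_{i+1})$ to extract a factor of $h$, and then control $\sum_i|\dot\x_{i+1}-\dot\x_i|$ by splitting into the $\beta$-difference term (bounded via $\Lip(\beta)$, the uniform force bound from Lemma~\ref{lem:extend-DPA}, and the BV estimate of Lemma~\ref{lem:totalvariation}) and the force-difference term (bounded via Lemma~\ref{lem:liploc_DPA} and $\sum_i|K_i|\le|\Omega|$). The paper's proof is simply a condensed version of your argument.
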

\begin{proof}
A direct estimate shows that
\begin{align*}
  & \sum_{i=0}^{N-1}\int_0^T \left|\rho_i^h\left(\dot{\x}_{i+1}-\dot{\x}_{i}\right)\left(\intbar_{K_i} \varphi \,dx -\varphi(\x_{i+1})\right)\right|dt\\
    % &\leq \sum_{i=1}^{N-2}\int_0^T \frac{h}{|K_i|} \left[\Lip(\beta)\|\f\|_{\ell^\infty}\left(|\rho_{i+1}^h-\rho_i^h|+|\rho_i^h-\rho_{i-1}^h|\right)+\beta_{max}c_{\f} |K_i|\right] \left|\intbar_{K_i} \varphi \,dx -\varphi(\x_{i+1})\right|\,dt\\
    &\hspace{10em}\leq  \,h \|\partial_x\varphi\|_{\infty}\left(c_{BV}\Lip(\beta)\|\f\|_{\infty}\|\bar\rho\|_{BV(\R)}+ \beta_{max}c_{\f}|\Omega|\right)T,
\end{align*}
where we used the bound
\begin{align*}
     \left|\intbar_{K_i} \varphi \,dx -\varphi(\x_{i+1})\right|
     %=&\left|\intbar_{K_i} \int_0^1\partial_x\varphi((1-\lambda)x+\lambda\x_{i+1})\,d\lambda (x-\x_{i+1}) \,dx \right|
     \leq \frac12 \|\partial_x\varphi\|_{\infty}|K_i|.
\end{align*}
and the bounds provided in Lemmas \ref{lem:extend-DPA} and \ref{lem:totalvariation}.
\end{proof}
\begin{lemma}\label{A2}
For any constant $c>0$ and for all $\varphi \in C_c^\infty([0,T)\times\R)$ with $\varphi\geq 0$, we have that
\begin{equation*}
    \left|\sum_{i=0}^{N-1}A_i^2\right| \to 0 \quad\mbox{as}\quad h\to 0\,,
\end{equation*}
where (dropping the $t$ variable)
\begin{equation}\label{eq:A2}
    A_i^2 := \int_0^T \sic\left(\varphi(\x_i)-\varphi(\x_{i+1})\right)\left[\rho_i^h\,\dot{\x}_i+\vartheta(\rho_i^h)\,\sfF_{\hat\rho^h}(\x_i)\right] dt\,.
\end{equation}
\end{lemma}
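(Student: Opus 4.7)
The strategy is to rewrite the bracketed integrand as a sum of two error terms, each controllable by $O(h)$.

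The first step is to rewrite $E_i := \rho_i^h \dot{\x}_i + \vartheta(\rho_i^h)\sfF_{\hat\rho^h}(\x_i)$ using the DPA itself. Exploiting the formula for $\dot{\x}_i$ in \eqref{eq:particle_complete} together with $\vartheta=\rho\beta$, a direct computation gives
\begin{equation*}
    \rho_i^h \dot{\x}_i + \vartheta(\rho_i^h)\f_i = \rho_i^h\bigl[\beta(\rho_i^h) - \beta(\rho_{i-1}^h)\bigr]\f_i^+,
\end{equation*}
with the convention $\beta(\rho_{-1}^h):=\beta_{max}$ to handle the boundary index $i=0$. Consequently,
\begin{equation*}
    E_i = \rho_i^h\bigl[\beta(\rho_i^h) - \beta(\rho_{i-1}^h)\bigr]\f_i^+ + \vartheta(\rho_i^h)\bigl[\sfF_{\hat\rho^h}(\x_i) - \f_i\bigr].
\end{equation*}
Since $|\sic|\le 1$ and $|\varphi(\x_i) - \varphi(\x_{i+1})|\le \|\partial_x\varphi\|_\infty|K_i|$, it will suffice to bound $\int_0^T\sum_{i=0}^{N-1}|K_i||E_i|\,dt$ by $O(h)$.

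For the first contribution to $E_i$, I will absorb the $\beta$-difference into the discrete BV norm: using $|K_i|\rho_i^h = h$, Lipschitz continuity of $\beta$, the uniform bound on $\|\f\|_\infty$ from Lemma~\ref{lem:extend-DPA}, and the uniform BV bound from Lemma~\ref{lem:totalvariation}, one gets
\begin{equation*}
    \sum_{i=0}^{N-1}|K_i|\rho_i^h\bigl|\beta(\rho_i^h) - \beta(\rho_{i-1}^h)\bigr||\f_i^+| \le h\,\|\f\|_\infty\Lip(\beta)\sum_{i=0}^{N-1}|\rho_i^h - \rho_{i-1}^h| \le Ch
\end{equation*}
uniformly in $t\in[0,T]$.

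For the second contribution, using $|K_i|\vartheta(\rho_i^h) = h\beta(\rho_i^h)\le h\beta_{max}$ reduces matters to a pointwise estimate on $|\sfF_{\hat\rho^h}(\x_i) - \f_i|$. Writing out the convolution gives
\begin{equation*}
    \sfF_{\hat\rho^h}(\x_i) - \f_i = \rho_i^h\!\int_{K_i}\!W'(\x_i - y)\,dy + \sum_{j\ne i}\Bigl[\rho_j^h\!\int_{K_j}\!W'(\x_i - y)\,dy - hW'(\x_i-\x_j)\Bigr].
\end{equation*}
The self-interaction piece is bounded by $Ch$ since $\rho_i^h|K_i|=h$ and $W'$ is bounded on the bounded set $\Omega-\Omega$. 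For the off-diagonal pieces, $\x_i-y$ stays on one side of $0$ for $y\in K_j$ with $j\ne i$, so the assumption $W'\in W^{2,\infty}(\R\setminus\{0\})$ provides a uniform Lipschitz constant, yielding a Taylor-type estimate with total off-diagonal error of order $h|\Omega|$. Thus $|\sfF_{\hat\rho^h}(\x_i) - \f_i|\le Ch$ uniformly in $i$, and summing over $i$ with $Nh=m$ gives that the second contribution to $\sum_i|K_i||E_i|$ is at most $h\beta_{max}\cdot Cm = O(h)$.

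The main obstacle is obtaining this uniform-in-$i$ bound on $|\sfF_{\hat\rho^h}(\x_i)-\f_i|$ under the mildest assumption (A-Wm), where $W'$ may have a jump at $0$; the saving point is that the singular behaviour is absorbed entirely into the self-interaction term, which is $O(h)$ for trivial volume reasons, while off-diagonal terms rely only on the regular behaviour of $W'$ away from the origin. Combining the two contributions and integrating over $[0,T]$ yields $|\sum_i A_i^2|\le CT\|\partial_x\varphi\|_\infty h\to 0$ as $h\to 0$, completing the plan.
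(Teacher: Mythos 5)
Your proposal is correct and follows essentially the same route as the paper: use the \eqref{eq:particle_complete} equation to reduce the bracket to $\rho_i^h\bigl[\beta(\rho_i^h)-\beta(\rho_{i-1}^h)\bigr]\f_i^+$ plus the error $\vartheta(\rho_i^h)\bigl[\sfF_{\hat\rho^h}(\x_i)-\f_i\bigr]$, then combine $\rho_i^h|K_i|=h$, the Lipschitz continuity of $\beta$, the uniform force bound of Lemma~\ref{lem:extend-DPA} and the BV bound of Lemma~\ref{lem:totalvariation}, exactly as in the paper's argument (which substitutes $\sfF_{\hat\rho^h}(\x_i)=\f_i+O(h)$ first rather than splitting additively). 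The only genuine addition is that you prove the $\sfF_{\hat\rho^h}(\x_i)=\f_i+O(h)$ step the paper merely asserts; note only that your displayed identity for the difference drops the harmless $O(h)$ contribution $-hW'(\x_i-\x_N)$ from the particle $j=N$, which has no matching cell $K_N$, but this does not affect the estimate.
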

\begin{proof}
    By using the regularity of the test function and the fact that $\sfF_{\hat\rho^h}(\x_i)=\f_i+O(h)$, we obtain the following estimate:
\begin{align*}
    & \sum_{i=0}^{N-1}\int_0^T \left|\left(\varphi(\x_i)-\varphi(\x_{i+1})\right)\left[\rho_i^h\dot{\x}_i+\vartheta(\rho_i^h)\,\sfF_{\hat\rho^h}(\x_i)\right]\right| dt\\
    % &\hspace{8em}\leq  h\|\partial_x\varphi\|_{\infty}\sum_{i=0}^{N-1}\int_0^T  \left|\dot{\x}_i+\beta(\rho_i^h)\,\sfF_{\hat\rho^h}(\x_i)\right| dt\\
        &\hspace{8em}\leq  h\|\partial_x\varphi\|_{\infty}\sum_{i=0}^{N-1}\int_0^T  \left|\dot{\x}_i+\beta(\rho_i^h)\f_i\right|\,dt + O(h)\,.
        \end{align*}
Using the equation for $\dot{\x}_i$, the Lipschitz continuity of $\beta$, and Lemma~\ref{lem:totalvariation} we get
        \begin{align*}
        \sum_{i=0}^{N-1}\int_0^T  \left|\dot{\x}_i+\beta(\rho_i^h)\f_i\right|\,dt &= \sum_{i=0}^{N-1}\int_0^T  \left|-\beta(\rho_i^h)\f_i^--\beta(\rho_{i-1}^h)\f_i^++\beta(\rho_i^h)\left(\f_i^++\f_i^-\right)\right|\,dt \\
         &\leq \sum_{i=0}^{N-1}\int_0^T \left|\beta(\rho_i^h)-\beta(\rho_{i-1}^h)\right||\f_i^+|\,dt \le c_{BV}\|\f\|_{\ell^\infty} \Lip(\beta) \|\bar\rho\|_{BV(\R)}\, T\,,
\end{align*}
which, together with the previous estimate, proves the desired convergence.
\end{proof}

\begin{lemma}\label{A3}
For any constant $c>0$ and for all $\varphi \in C_c^\infty([0,T)\times\R)$ with $\varphi\geq 0$, 
we have that
\begin{align*}
    \liminf_{h\to 0} \sum_{i=0}^{N} A_i^3 &\geq 0\,,
\end{align*}
where (dropping the $t$ variable)
\begin{equation}\label{eq:A3}
     A_i^3 :=\int_0^T\left(\sicm-\sic\right)\varphi(\x_i)\left[\dot{\x}_i+\beta(c)\sfF_{\hat\rho^h}(\x_i)\right] dt\,,
\end{equation}
with the convention that $\rho_{-1}^h = \rho_N^h = 0$.
\end{lemma}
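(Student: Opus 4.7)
We decompose each $A_i^3$ into a nonnegative principal part plus a small remainder. With the conventions $\rho_{-1}^h = \rho_N^h = 0$ (so that $\beta(\rho_{-1}^h) = \beta(\rho_N^h) = \beta_{\max}$), the deterministic particle system takes the unified form $\dot\x_i = -\beta(\rho_i^h)\f_i^- - \beta(\rho_{i-1}^h)\f_i^+$ for every $i = 0,\ldots,N$. Setting $H^h(\x_i) := \sfF_{\hat\rho^h}(t,\x_i) - \f_i$, direct substitution yields
\[
\dot{\x}_i + \beta(c)\,\sfF_{\hat\rho^h}(t,\x_i) = \bigl(\beta(c)-\beta(\rho_i^h)\bigr)\f_i^- + \bigl(\beta(c)-\beta(\rho_{i-1}^h)\bigr)\f_i^+ + \beta(c)\,H^h(\x_i)\,,
\]
and hence the splitting $A_i^3 = A_i^{3,a} + A_i^{3,b}$, where $A_i^{3,a}$ collects the first two summands and $A_i^{3,b}$ the last.

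The nonnegativity $A_i^{3,a} \ge 0$ is a matter of signs. Since $\beta$ is monotone decreasing by (A-$\beta$), one has $\sign(\beta(c) - \beta(s)) = -\sign(s-c)$. Combined with $\f_i^- \le 0 \le \f_i^+$, a brief case analysis on the sign patterns of $(\rho_{i-1}^h - c, \rho_i^h - c)$ shows that each of the two summands $(\sicm - \sic)\varphi(\x_i)\bigl(\beta(c)-\beta(\rho_i^h)\bigr)\f_i^-$ and $(\sicm - \sic)\varphi(\x_i)\bigl(\beta(c)-\beta(\rho_{i-1}^h)\bigr)\f_i^+$ is pointwise nonnegative in $t$ (and vanishes identically when $\sicm = \sic$). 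Integrating in time, $A_i^{3,a} \ge 0$ for each $i$, hence $\sum_{i=0}^N A_i^{3,a} \ge 0$.

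The crux is to show $\sum_{i=0}^N A_i^{3,b} \to 0$ as $h \to 0$. Setting $g_i := \varphi(\x_i) H^h(\x_i)$ and extending with $\eta'_c(\rho_{-1}^h) = \eta'_c(\rho_N^h) = -1$, Abel summation gives
\[
\sum_{i=0}^N (\sicm - \sic) g_i \,=\, g_N - g_0 + \sum_{j=0}^{N-1} \eta'_c(\rho_j^h)\bigl(g_{j+1} - g_j\bigr)\,.
\]
Under (A-Wn) an explicit computation using $W'(0) = 0$ yields $H^h \equiv 0$ at particle positions, so $A_i^{3,b} = 0$. Under (A-Wr), a Taylor expansion of $W'$ within each cell $K_j$ gives the quadrature-error bounds $\|H^h\|_{\infty} \le Ch$ and $|H^h(\x_{j+1}) - H^h(\x_j)| \le Ch|K_j|$; combined with the $C^1$-regularity of $\varphi$ and the uniform bound on $|\x_N - \x_0|$ from Theorem~\ref{thm:existence-DPA}, one obtains $|\sum_i (\sicm - \sic) g_i| = O(h)$ and hence $|\sum_i A_i^{3,b}| = O(hT) \to 0$. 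Under (A-Wm) the same Taylor-type estimate applies, since the strict particle ordering from Lemma~\ref{lem:lower_DPA} guarantees that all arguments $\x_i - \x_j$ ($i \ne j$) remain bounded away from the singular point of $W'$, so that the smoothness of $W''$ on $\R\setminus\{0\}$ can be used uniformly. The main technical delicacy lies precisely in this last step, namely controlling $H^h$ under (A-Wm) through the particle-separation lemma.
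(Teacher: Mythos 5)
Your decomposition and your treatment of the principal part coincide with the paper's proof: there, too, one replaces $\sfF_{\hat\rho^h}(t,\x_i)$ by $\f_i$, rewrites $\dot\x_i+\beta(c)\f_i=(\beta(c)-\beta(\rho_i^h))\f_i^-+(\beta(c)-\beta(\rho_{i-1}^h))\f_i^+$, and gets nonnegativity from the monotonicity of $\beta$ exactly as in your case analysis (including the boundary indices via $\rho_{-1}^h=\rho_N^h=0$). Where you genuinely differ is the remainder: the paper declares the substitution to cost $O(h)$ per index, ``as in the previous lemma'', which leaves open why the sum over all indices with $\sicm\ne\sic$ vanishes --- the number of such sign changes is not controlled by the BV bound, so per-index $O(h)$ errors do not obviously sum to $o(1)$. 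Your Abel summation removes this issue: writing $g_i=\varphi(\x_i)H^h(\x_i)$ with $H^h(\x_i)=\sfF_{\hat\rho^h}(t,\x_i)-\f_i$, the sum $\sum_i(\sicm-\sic)g_i$ becomes boundary terms plus $\sum_j\eta_c'(\rho_j^h)(g_{j+1}-g_j)$, so only $\|H^h\|_\infty\le Ch$ and $\sum_j|g_{j+1}-g_j|\le Ch$ are needed; this is a cleaner and more robust route, and your observation that $H^h$ vanishes exactly at the particles under (A-Wn) is correct (the mass to the left and right of $\x_i$ is $ih$ and $(N-i)h$, matching the particle sum). Two caveats: the increment bound $|H^h(\x_{j+1})-H^h(\x_j)|\le Ch|K_j|$ does hold under (A-Wr) (using $W'\in W^{2,\infty}(\R)$, $W'(0)=0$; possibly with $|K_j|+|K_{j+1}|$ on the right, which is harmless after summation), but your justification for (A-Wm) is loose as stated --- the particle separation $h/M$ is not ``bounded away from the singular point'' uniformly in $h$, and the contributions from the cells adjacent to $\x_i$ together with the term $W'(|K_i|)$ probe the possible jump of $W'$ at the origin; one must check that these jump contributions cancel between the convolution $W'\star\hat\rho^h$ and the discrete sum (as they do in the Newtonian computation). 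Since the lemma is invoked only under (A-Wr)/(A-Wn) in the proof of Theorem~\ref{mainthm:regular-newtonian}, and the paper's own proof is no more detailed on (A-Wm), this looseness does not affect the result as used.
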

\begin{proof}
A direct computation as in the previous lemma shows that
    \begin{align*}
     &\int_0^T\left(\sicm-\sic\right)\varphi(\x_i)\left[\dot{\x}_i+\beta(c)\,\sfF_{\hat\rho^h}(t,\x_i)\right]\,dt\\
     &\qquad = \int_0^T\left(\sicm-\sic\right)\varphi(\x_i)\left[\dot{\x}_i+\beta(c)\f_i\right]\,dt + O(h)\\
    %  & = \int_0^T\left(\sicm-\sic\right)\varphi(\x_i)\left[-\beta(\rho_i^h)\f_i^--\beta(\rho_{i-1}^h)\f_i^++\beta(c)\f_i\right] dt\\
     &\qquad = \int_0^T\left(\sicm-\sic\right)\varphi(\x_i)\left[\left(\beta(c)-\beta(\rho_i^h)\right)\f_i^-+\left(\beta(c)-\beta(\rho_{i-1}^h)\right)\f_i^+\right] dt + O(h)\,.
\end{align*}
Notice that if either $\rho_i^h, \rho_{i-1}^h>c$ or $\rho_i^h, \rho_{i-1}^h<c$, then the integral on the right-hand side is zero. In the other cases, the non-negativity follows from the monotonicity of $\beta$.
\end{proof}

\begin{lemma}\label{A4}
For any constant $c>0$ and for all $\varphi \in C_c^\infty(\left[0,T\right]\times\R)$ with $\varphi\geq 0$, we have that 
\begin{align*}
      \left|\sum_{i=0}^{N-1}A_i^4\right| \to 0 \mbox{ as } h\to 0\,,
\end{align*}
where (dropping the $t$ variable)
\begin{equation}\label{eq:A4}
     A_i^4 :=\int_0^T\sic\,\vartheta(\rho_i^h)\left[\int_{K_i} (\partial_x\sfF_{\hat\rho^h})(x)\,\varphi(x) \,dx  - \varphi(\x_i)\left(\sfF_{\hat\rho^h}(\x_{i+1})-\sfF_{\hat\rho^h}(\x_i)\right) \right] dt\,.
\end{equation}
% . We introduce
% \begin{enumerate}[label=(\alph*)]
% \item if $W$ is under assumption (A-Wr)
% \begin{equation}\label{eq:A4}
%      A_i^4 :=\int_0^T\sic\vartheta(\rho_i^h)\left[\int_{K_i}\psi'(x,\hat\rho^h)\varphi \,dx  - \varphi(\x_i)\left(\psi(\x_i,\hat\rho^h)-\psi(\x_{i+1},\hat\rho^h)\right) \right]\,dt,
% \end{equation}
% \item if $W$ is under assumption (A-Wn)
% \begin{equation}\label{eq:A4_new}
%      A_i^4 :=\int_0^T\sic\vartheta(\rho_i^h)\left[\int_{K_i}\hat\rho^h\varphi \,dx  - \varphi(\x_i)\left(\psi(\x_i,\hat\rho^h)-\psi(\x_{i+1},\hat\rho^h)\right) \right]\,dt,
% \end{equation}
% \end{enumerate}
% Then we have that
\end{lemma}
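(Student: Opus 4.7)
The plan is to rewrite the bracketed expression inside $A_i^4$ using the fundamental theorem of calculus, and then exploit the identity $\rho_i^h |K_i| = h$ together with a uniform $L^\infty$ bound on $\partial_x \sfF_{\hat\rho^h}$ to gain an extra factor of $h$.

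\textbf{Step 1: Rewriting via FTC.} Since the entropy statement is only asserted under (A-Wr) or (A-Wn), the force $\sfF_{\hat\rho^h}(t,\cdot)$ is Lipschitz in $x$ uniformly in $h\in(0,1)$ and $t\in[0,T]$: in case (A-Wr) this follows from $V''\in L^\infty(\R)$ and $W''\in L^\infty(\R)$, while in case (A-Wn) it follows from the explicit representation in Remark~\ref{rem:Newtonian-Lipschitz}, namely $\partial_x\sfF_{\hat\rho^h} = V'' \pm 2\hat\rho^h$, together with the uniform bound $\hat\rho^h\le M$. In either case there exists a constant $c_{\sfF}>0$, independent of $h\in(0,1)$, such that $\|\partial_x\sfF_{\hat\rho^h}(t,\cdot)\|_{L^\infty(\R)}\le c_{\sfF}$ for almost every $t\in[0,T]$. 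In particular,
\[
    \sfF_{\hat\rho^h}(\x_{i+1})-\sfF_{\hat\rho^h}(\x_i) = \int_{K_i}(\partial_x\sfF_{\hat\rho^h})(x)\,dx\,,
\]
so that the bracket in \eqref{eq:A4} reduces to
\[
    \int_{K_i}(\partial_x\sfF_{\hat\rho^h})(x)\bigl[\varphi(x)-\varphi(\x_i)\bigr]\,dx\,.
\]

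\textbf{Step 2: Pointwise estimate.} Using $|\eta'_c|\le 1$, $\vartheta(\rho_i^h) = \rho_i^h\beta(\rho_i^h)\le \beta_{max}\rho_i^h$, $|\partial_x\sfF_{\hat\rho^h}|\le c_{\sfF}$, and the Lipschitz bound $|\varphi(x)-\varphi(\x_i)|\le \|\partial_x\varphi\|_{L^\infty}|K_i|$ valid for all $x\in K_i$, we obtain
\[
    |A_i^4| \le \beta_{max}\,c_{\sfF}\,\|\partial_x\varphi\|_{L^\infty}\int_0^T \rho_i^h(t)\,|K_i(t)|^2\,dt\,.
\]
The crucial observation is that $\rho_i^h(t)|K_i(t)| = h$ by definition, so $\rho_i^h|K_i|^2 = h|K_i|$.

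\textbf{Step 3: Summation and conclusion.} Summing over $i=0,\ldots,N-1$ and using $\sum_{i=0}^{N-1}|K_i(t)| = \x_N(t)-\x_0(t)$ together with the uniform bound $\sup_{t\in[0,T]}|\x_N(t)-\x_0(t)|\le C_\Omega$ given by Theorem~\ref{thm:existence-DPA}(ii), we get
\[
    \sum_{i=0}^{N-1}|A_i^4| \le \beta_{max}\,c_{\sfF}\,\|\partial_x\varphi\|_{L^\infty}\,h\int_0^T(\x_N(t)-\x_0(t))\,dt \le \beta_{max}\,c_{\sfF}\,\|\partial_x\varphi\|_{L^\infty}\,C_\Omega\,T\,h\,,
\]
which vanishes as $h\to 0$.

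The only real obstacle is establishing the uniform $L^\infty$ bound on $\partial_x\sfF_{\hat\rho^h}$ used in Step~1; this is why the entropy statement is restricted to (A-Wr) or (A-Wn). In the Newtonian case the representation from Remark~\ref{rem:Newtonian-Lipschitz} combined with $\hat\rho^h\le M$ (Lemma~\ref{lem:lower_DPA}) is essential, whereas in the regular case the bound is an immediate consequence of $W''\in L^\infty$ and $V''\in L^\infty$. Everything else is an application of the identity $\rho_i^h|K_i|=h$ and the telescoping sum $\sum_i|K_i|=\x_N-\x_0$.
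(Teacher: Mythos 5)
Your proof is correct, and it reaches the same $O(h)$ conclusion as the paper but by a slightly different and arguably cleaner local estimate. The paper splits the bracket in \eqref{eq:A4} into two pieces, namely $\intbar_{K_i}\{\partial_x\sfF_{\hat\rho^h}-\tfrac{\sfF_{\hat\rho^h}(\x_{i+1})-\sfF_{\hat\rho^h}(\x_i)}{|K_i|}\}\varphi\,dx$ plus a term involving $\intbar_{K_i}\varphi\,dx-\varphi(\x_i)$; the first piece needs second-order information on the force, so the paper invokes the $W^{2,\infty}$ regularity of $x\mapsto\sfF_{\hat\rho^h}(t,x)$ under (A-Wr) and then runs a separate explicit computation for the Newtonian case (A-Wn), where the force is only Lipschitz. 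You instead use the fundamental theorem of calculus (valid since $\sfF_{\hat\rho^h}(t,\cdot)$ is Lipschitz, hence absolutely continuous, in both cases) to collapse the bracket into the single integral $\int_{K_i}\partial_x\sfF_{\hat\rho^h}(x)\,[\varphi(x)-\varphi(\x_i)]\,dx$, which only requires the uniform $L^\infty$ bound on $\partial_x\sfF_{\hat\rho^h}$ ($V''+W''\ast\hat\rho^h$ in the regular case, $V''\pm 2\hat\rho^h\le \|V''\|_\infty+2M$ in the Newtonian case via Remark~\ref{rem:Newtonian-Lipschitz} and $\hat\rho^h\le M$). This buys a unified treatment of (A-Wr) and (A-Wn) and avoids any appeal to second derivatives of the force; the price is nothing, since the a.e.\ existence of $\partial_x\sfF_{\hat\rho^h}$ is already implicit in the statement of $A_i^4$. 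The final step --- extracting a factor $h$ from $\vartheta(\rho_i^h)|K_i|\le\beta_{max}\rho_i^h|K_i|=\beta_{max}h$ and summing $\sum_i|K_i|\le|\x_N-\x_0|\le C_\Omega$ --- is identical in spirit to the paper's use of $h\beta_{max}$ and $|\Omega|$.
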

\begin{proof}
Under assumption (A-Wr) the map $x\mapsto\sfF_{\hat\rho^h}(t,x)$ is an element of the Sobolev space $W^{2,\infty}(\R)$. Therefore, we can apply Taylor's theorem to obtain
\begin{align*}
    \intbar_{K_i} \partial_x\sfF_{\hat\rho^h}\,\varphi \,dx  - \varphi(\x_i)\frac{\sfF_{\hat\rho^h}(\x_{i+1})-\sfF_{\hat\rho^h}(\x_i)}{|K_i|} &= \intbar_{K_i} \left\{ \partial_x\sfF_{\hat\rho^h}(x) - \frac{\sfF_{\hat\rho^h}(\x_{i+1})-\sfF_{\hat\rho^h}(\x_i)}{|K_i|}\right\} \varphi(x) \,dx \\
    &\qquad + \frac{\sfF_{\hat\rho^h}(\x_{i+1})-\sfF_{\hat\rho^h}(\x_i)}{|K_i|} \left\{ \intbar_{K_i} \varphi(x)\,dx - \varphi(\x_i)\right\} \\
    &= c_1\|\varphi\|_{W^{1,\infty}(\R)}|K_i|\,,
\end{align*}
for some constant $c_1>0$, independent of $h\in(0,1)$. As for the Newtonian potential, i.e.\ $W$ satisfying (A-Wn), we use the explicit formula in \eqref{eq:newtonian-force} to find
\begin{align*}
    \intbar_{K_i} \partial_x\sfF_{\hat\rho^h}^W\,\varphi \,dx  - \varphi(\x_i)\frac{\sfF_{\hat\rho^h}^W(\x_i)-\sfF_{\hat\rho^h}^W(\x_{i+1})}{|K_i|} = \pm \intbar_{K_i}\bigl[\varphi(x)-\varphi(\x_i)\bigr]\,\hat\rho^h(x)\,dx,\,
\end{align*}
where $\sfF_{\hat\rho^h}^W$ denotes the interaction part of the force $\sfF_{\hat\rho^h}$, and the sign depends on whether $W$ is attractive or repulsive. In either cases, we obtain a similar estimate as in the (A-Wr) case, which gives
   \begin{align*}
     \left|\sum_{i=0}^{N-1}A_i^4\right| &\leq h\beta_{max}\sum_{i=0}^{N-1} \int_0^T \left| \intbar_{K_i} \partial_x\sfF_{\hat\rho^h}\,\varphi \,dx  - \varphi(\x_i)\frac{\sfF_{\hat\rho^h}(\x_{i+1})-\sfF_{\hat\rho^h}(\x_i)}{|K_i|}\right| dt\\
   &\leq c_1 h\beta_{max}|\Omega|T\,,
\end{align*}
thereby concluding the proof, after passing to the limit $h\to 0$.
\end{proof}

\subsubsection{Proof of Theorem~\ref{mainthm:regular-newtonian}}\label{sec:proof-A}

We are now in the position of proving the inequality \eqref{eq:entropy_disc}, and consequently (ii) of Theorem~\ref{mainthm:regular-newtonian}. The proof of Theorem~\ref{mainthm:regular-newtonian}(i) follows from the proof of Theorem~\ref{mainthm:mild} with the driving functional $\calF$ and force $\sfF_\rho$ in place of $\widehat\calF_h$ and $\sfF_\rho^h$ respectively (cf.\ Remarks~\ref{rem:W-free-energy} and \ref{rem:fisher-information}).

Let us now prove inequality \eqref{eq:entropy_disc}. We begin by splitting the integral in \eqref{eq:entropy_disc} as follows 
\begin{align*}
 &\int_\R \eta_c(\hat\rho_0^h)\,\varphi(0,\cdot)\,dx + \int_0^T\int_\R \eta_c(\hat\rho^h)\,\partial_t\varphi\,dx\,dt\\
 &\qquad - \int_0^T\int_\R \eta'_c(\hat\rho^h)\left[\left(\vartheta(\hat\rho^h)-\vartheta(c)\right)\sfF_{\hat\rho^h}\,\partial_x\varphi-\vartheta(c)\,\partial_x \sfF_{\hat\rho^h}\,\varphi\right] dx \,dt
% &= \sum_{i=0}^{N-1} \int_0^T\int_{K_i} \eta_c(\rho_i^h)\,\partial_t\varphi-\sic\left[\left(\vartheta(\rho_i^h)-\vartheta(c)\right)\sfF_{\hat\rho^h}\,\partial_x\varphi-\vartheta(c)\,(\partial_x \sfF_{\hat\rho^h})\,(\partial_x \sfF_{\hat\rho^h})\varphi \right] dx \,dt\\
=: B^1 + \sum_{i=0}^{N-1} I_i^1 + I_i^2,
\end{align*}
where we have defined
\begin{align*}
    I_i^1 &:= \int_{K_i} \eta_c(\bar\rho_i^h)\,\varphi(0,x)\,dx + \int_0^T\int_{K_i} \eta_c(\rho_i^h)\,\partial_t\varphi\,dx \,dt\,,\\
     I_i^2 &:= -\int_0^T\int_{K_i}\sic\left[\left(\vartheta(\rho_i^h)-\vartheta(c)\right)\sfF_{\hat\rho^h}\,\partial_x\varphi-\vartheta(c)\,\partial_x \sfF_{\hat\rho^h}\,\varphi\right] dx \,dt\,,\\
    B^1 &:= \int_{\R\setminus[\bar\x_0,\bar\x_N]} c \varphi(0,\cdot)\,dx + \int_0^T\int_{-\infty}^{\x_0} c\partial_t\varphi -  \vartheta(c)\left(\sfF_{\hat\rho^h}\,\partial_x\varphi+\,\partial_x \sfF_{\hat\rho^h}\,\varphi\right) dx\,dt \\
    &\qquad + \int_0^T\int_{\x_N}^{+\infty} c\partial_t\varphi -  \vartheta(c)\left(\sfF_{\hat\rho^h}\,\partial_x\varphi+\partial_x \sfF_{\hat\rho^h}\,\varphi\right) dx\,dt\,.
\end{align*}
On performing integration by parts in $t$ and $x$, and rearranging the terms appropriately, we obtain the  expressions
\begin{align*}
    I_i^1 &= \int_0^T \sic\,\rho_i^h\left(\dot{\x}_{i+1}-\dot{\x}_{i}\right)\left(\intbar_{K_i} \varphi \,dx -\varphi(\x_{i+1})\right) dt\\
      &\qquad + \int_0^T \sic\Bigl[\varphi(\x_{i+1})\left(c\dot{\x}_{i+1}-\rho_i^h\dot{\x}_i\right)+(\rho_i^h-c)\,\varphi(\x_{i})\dot{\x}_{i}\Bigr]\, dt\,,\\
    I_i^2 &=-\int_0^T\sic\left(\vartheta(\rho_i^h)-\vartheta(c)\right)\Bigl[\sfF_{\hat\rho^h}(\x_{i+1})\,\varphi(\x_{i+1})-\sfF_{\hat\rho^h}(\x_i)\,\varphi(\x_i)\Bigr]\,dt\\
     &\qquad+\int_0^T\sic\,\vartheta(\rho_i^h)\,\int_{K_i}\partial_x \sfF_{\hat\rho^h}\,\varphi \,dx \,dt\,,
\end{align*}
and
\[
    B^1 = \int_0^T \varphi(\x_N)\bigl[c\dot\x_N +\vartheta(c)\,\sfF_{\hat\rho^h}(\x_N)\,\varphi (\x_N)\bigr] \,dt  -\int_0^T \varphi(\x_0)\bigl[c\dot\x_0 + \vartheta(c)\,\sfF_{\hat\rho^h}(\x_0)\,\varphi(\x_0)\bigr]\,dt\,.
\]
This allows to rewrite the right-hand side of \eqref{eq:entropy_disc} as
\begin{align*}
    &\liminf_{h\to 0}\sum_{i=0}^{N-1} I_i^1 + I_i^2 = \liminf_{h\to 0}\sum_{i=0}^{N-1} A_i^1 +I_i^3 + I_i^4
    \end{align*}
where $A_i^1$ is defined in \eqref{eq:A1} and
    \begin{align*}
    I_i^3  &=\int_0^T \sic\Bigl[\varphi(\x_{i+1})\left(c\dot{\x}_{i+1}-\rho_i^h\dot{\x}_i\right)+(\rho_i^h-c)\,\varphi(\x_{i})\dot{\x}_{i}\Bigr]\, dt\\
    &\qquad-\int_0^T\sic\left(\vartheta(\rho_i^h)-\vartheta(c)\right)\Bigl[\sfF_{\hat\rho^h}(\x_{i+1})\,\varphi(\x_{i+1})-\sfF_{\hat\rho^h}(\x_i)\,\varphi(\x_i)\Bigr]\,dt\,,\\
    I_i^4 &= \int_0^T\sic\,\vartheta(\rho_i^h)\,\int_{K_i}(\partial_x \sfF_{\hat\rho^h})\,\varphi \,dx \,dt\,.
\end{align*}
% Note that in case of Newtonian potential the only difference is in the definition of $I_i^4$ that is given by
% \[
%  I_i^4 = \pm \int_0^T\sic\vartheta(\rho_i^h)\int_{K_i}\hat\rho^h\varphi \,dx \,dt
% \]
In Lemma \ref{A1} we have seen that 
\begin{align*}
    \left|\sum_{i=0}^{N-1}A_i^1\right|&\to 0\quad \mbox{as}\quad h\to 0.
\end{align*}
A numerical integration by parts yields
\begin{align*}
    \sum_{i=0}^{N-1} I_i^3 &= \sum_{i=0}^{N-1}\int_0^T\sic\,\varphi(\x_i)\left[\rho_i^h\dot{\x}_i+\vartheta(\rho_i^h)\,\sfF_{\hat\rho^h}(\x_i)\right] dt\\
    & -\sum_{i=1}^{N}\int_0^T\sicm\,\varphi(\x_i)\left[\rho_{i-1}^h\dot{\x}_{i-1}+\vartheta(\rho_{i-1}^h)\,\sfF_{\hat\rho^h}(\x_i)\right] dt\\
    & +\sum_{i=0}^{N}\int_0^T\left(\sicm-\sic\right)\varphi(\x_i)\left[c\dot{\x}_i+\vartheta(c)\,\sfF_{\hat\rho^h}(\x_i)\right] dt - B^1\\
    & = -B^1 + \sum_{i=1}^{N-1}I_i^5+A_i^2+A_i^3\,,
\end{align*}
where the contribution of the terms $A_i^2$ introduced in \eqref{eq:A2} vanishes thanks to Lemma~\ref{A2}, while, using  Lemma~\ref{A3}, we have that
\begin{align*}
    \liminf_{h\to 0} \sum_{i=1}^{N-1} A_i^3  & \geq 0\,.
\end{align*}
The contributions of $I_i^4$ and
\[
I_i^5=- \int_0^T\sic\,\varphi(\x_i)\,\vartheta(\rho_{i}^h)\left(\sfF_{\hat\rho^h}(\x_{i+1})-\sfF_{\hat\rho^h}(\x_i)\right) dt\,,
\]
merge into $A_i^4$ introduced in \eqref{eq:A4}, which was shown to vanish due to Lemma~\ref{A4}. Putting all the terms together a passing to the limit inferior then yields \eqref{eq:entropy_disc}. 

We now prove that \eqref{eq:entropy_disc} ultimately leads to the entropy inequality \eqref{eq:EI} in Definition~\ref{def:entropy_sol}. Indeed, due to the almost sure convergence and $L^1([0,T]\times\R)$ convergence of $\hat\rho^h$ towards $\rho$ as $h\to 0$, along with the convergences
\[
    \sfF_{\hat\rho^h} \to \sfF_{\rho},\qquad \partial_x\sfF_{\hat\rho^h}\to \partial_x\sfF_{\rho}\qquad\text{for almost every $(t,x)\in(0,T)\times\R$}\,,
\]
one observes that the right-hand side of inequality \eqref{eq:entropy_disc} coincides with the right-hand side of \eqref{eq:EI}, which then concludes the proof.\hfill \qedsymbol

\appendix

\section{Proof of Lemma~\ref{lem:liploc_DPA}}\label{app:liploc_DPA}

For notational simplicity, we set
\[
    \f_i^V = V'(x_i),\qquad \f_i^W = h\sum_{j\ne i} W'(\x_i-\x_j)\,.
\]
Due to the regularity assumed on $V$ and $W$, and the lower bound \eqref{eq:lower_DPA}, we easily find
\begin{align*}
    |\f_{i+1}^V - \f_i^V| &\le \|V''\|_{L^\infty(\R)}|K_i|\,,\\
    |\f_{i+1}^W - \f_i^W| &\le \Bigl(m\|W''\|_{L^\infty(\R\setminus\{0\})} + 2M\|W'\|_{L^\infty(\R\setminus\{0\})}\Bigr)|K_i|\,.
\end{align*}
An application of the triangle inequality then yields
\[
    |\f_{i+1}-\f_i| \le c_{\f}^{(1)}|K_i|\,,
\]
with the constant
\[
    c_{\f}^{(1)} := \|V''\|_{L^\infty(\R)} + m\|W''\|_{L^\infty(\R\setminus\{0\})} + 2M\|W'\|_{L^\infty(\R\setminus\{0\})}\,.
\]

As for the other inequality, we make use of the general inequality
\[
    |g(r+a) - 2g(r) + g(r-b)| \le \Lip(g')\left(\frac{a^2 + b^2}{2}\right) + |g'(r)||a-b|\,,\qquad a,b>0\,,
\]
whenever the terms are well-defined. For the external potential, we have
\begin{align*}
    |\f_{i+1}^V-2\f_i^V+\f_{i-1}^V| &= |V'(\x_i + |K_i|) - 2V'(\x_i) + V'(\x_i - |K_i|)| \\
    &\le \Lip(V'')\left(\frac{|K_i|^2 + |K_{i-1}|^2}{2}\right) + \|V''\|_{L^\infty(\R)}\Bigl||K_i|-|K_{i-1}|\Bigr|\,.
\end{align*}
As for the interaction potential, we find
\begin{align*}
    |\f_{i+1}^W-2\f_i^W+\f_{i-1}^W| &\le h\sum_{j\ne i+1,i,i-1} \Bigl| W'(\x_i-\x_j +|K_i|) - 2hW'(\x_i-\x_j) + W'(\x_i-\x_j-|K_{i-1}|)\Bigr| \\
    &\qquad + 2h\bigl|W'(|K_i|) - W'(|K_{i-1}|)\bigr| \\
    &= h\sum_{j> i+1} \Bigl| W'(\x_j-\x_i -|K_i|) - 2hW'(\x_j-\x_i) + W'(\x_j-\x_i+|K_{i-1}|)\Bigr| \\
    &\qquad +h\sum_{j<i-1} \Bigl| W'(\x_i-\x_j +|K_i|) - 2hW'(\x_i-\x_j) + W'(\x_i-\x_j-|K_{i-1}|)\Bigr| \\
    &\qquad + 2h\bigl|W'(|K_i|) - W'(|K_{i-1}|)\bigr| \\
    &\le m\Lip(W'')\left(\frac{|K_i|^2 + |K_{i-1}|^2}{2}\right) + m\|W''\|_{L^\infty(\R\setminus\{0\})}\Bigl||K_i|-|K_{i-1}|\Bigr| \\
    &\qquad + 2h\|W''\|_{L^\infty(\R\setminus\{0\})}\Bigl||K_i|-|K_{i-1}|\Bigr|\,.
\end{align*}
Putting the estimates together, we arrive at
\[
    |\f_{i+1}-2\f_i+\f_{i-1}| \le c_{\f}^{(2)} \Bigl(|K_i|^2 + |K_{i-1}|^2\Bigr) + c_{\f}^{(3)}\Bigl||K_i|-|K_{i-1}|\Bigr|\,,
\]
with
\[
    c_{\f}^{(2)} := \Lip(V'') + m\Lip(W'')\,,\qquad c_{\f}^{(3)} := \|V''\|_{L^\infty(\R)} + (2+m)\|W''\|_{L^\infty(\R\setminus\{0\})}\,.
\]
We then conclude the first part of the lemma by setting
\[
    c_{\f} := \max\{c_{\f}^{(1)},c_{\f}^{(2)},c_{\f}^{(3)}\}
\]

For the Newtonian case, i.e.\ $W$ satisfying (A-Wn), we simply obtain $\f_{i+1}^W-\f_i^W = 2h$, and hence
\[
    |\f_{i+1}^W-\f_i^W| = 2h \le 2M|K_i|\,,\qquad |\f_{i+1}^W-2\f_i^W+\f_{i-1}^W| = 0\,,
\]
which ultimately leads to the required inequality with $c_{\f}:=\max\{\|V''\|_{L^\infty(\R)}+2M,\Lip(V'')\}$.

\bibliographystyle{myplain}

\begin{thebibliography}{10}

\bibitem{AGS2008}
L.~Ambrosio, N.~Gigli, and G.~Savar\'{e}.
\newblock {\em Gradient flows in metric spaces and in the space of probability
  measures}.
\newblock Lectures in Mathematics ETH Z\"{u}rich. Birkh\"{a}user Verlag, Basel,
  second edition, 2008.

\bibitem{AMS2011}
L.~Ambrosio, E.~Mainini, and S.~Serfaty.
\newblock Gradient flow of the {C}hapman-{R}ubinstein-{S}chatzman model for
  signed vortices.
\newblock {\em Ann. Inst. H. Poincar\'{e} Anal. Non Lin\'{e}aire},
  28(2):217--246, 2011.

\bibitem{AS2008}
L.~Ambrosio and S.~Serfaty.
\newblock A gradient flow approach to an evolution problem arising in
  superconductivity.
\newblock {\em Communications on Pure and Applied Mathematics},
  61(11):1495--1539, 2008.

\bibitem{BHJ11}
M.~Baines, M.~Hubbard, and P.~Jimack.
\newblock A moving mesh finite element algorithm for the adaptive solution of
  time-dependent partial differential equations with moving boundaries.
\newblock {\em Journal of Computational and Applied Mathematics}, 54:450--469,
  2004.

\bibitem{BLL2012}
A.~L. Bertozzi, T.~Laurent, and F.~L\'{e}ger.
\newblock Aggregation and spreading via the {N}ewtonian potential: the dynamics
  of patch solutions.
\newblock {\em Math. Models Methods Appl. Sci.}, 22(suppl. 1):1140005, 39,
  2012.

\bibitem{BBL2005}
F.~Bolley, Y.~Brenier, and G.~Loeper.
\newblock Contractive metrics for scalar conservation laws.
\newblock {\em J. Hyperbolic Differ. Equ.}, 2(1):91--107, 2005.

\bibitem{BCdiFP2015}
G.~A. Bonaschi, J.~A. Carrillo, M.~Di~Francesco, and M.~A. Peletier.
\newblock Equivalence of gradient flows and entropy solutions for singular
  nonlocal interaction equations in 1{D}.
\newblock {\em ESAIM Control Optim. Calc. Var.}, 21(2):414--441, 2015.

\bibitem{Bre2009}
Y.~Brenier.
\newblock {$L^2$} formulation of multidimensional scalar conservation laws.
\newblock {\em Arch. Ration. Mech. Anal.}, 193(1):1--19, 2009.

\bibitem{BHR96}
C.~Budd, W.~Huang, and R.~Russell.
\newblock Moving mesh methods for problems with blow-up.
\newblock {\em SIAM Journal on Scientific Computing}, 17:305--327, 1996.

\bibitem{BHR09}
C.~Budd, W.~Huang, and R.~Russell.
\newblock Adaptivity with moving grids.
\newblock {\em Acta Numerica}, 18:111--241, 2009.

\bibitem{BdiFD2006}
M.~Burger, M.~Di~Francesco, and Y.~Dolak-Struss.
\newblock The {K}eller-{S}egel model for chemotaxis with prevention of
  overcrowding: linear vs. nonlinear diffusion.
\newblock {\em SIAM J. Math. Anal.}, 38(4):1288--1315, 2006.

\bibitem{CHR02}
W.~Cao, W.~Huang, and R.~Russell.
\newblock A moving mesh method based on the geometric conservation law.
\newblock {\em SIAM Journal on Scientific Computing}, 24:118--142, 2002.

\bibitem{CHR03}
W.~Cao, W.~Huang, and R.~Russell.
\newblock Approaches for generating moving adaptive meshes: location versus
  velocity.
\newblock {\em Applied Numerical Mathematics}, 47(2):121--138, 2003.

\bibitem{CdiFFLS2011}
J.~A. Carrillo, M.~DiFrancesco, A.~Figalli, T.~Laurent, and D.~Slep\v{c}ev.
\newblock Global-in-time weak measure solutions and finite-time aggregation for
  nonlocal interaction equations.
\newblock {\em Duke Math. J.}, 156(2):229--271, 2011.

\bibitem{CDMM}
J.~A. Carrillo, B.~D{\"u}ring, D.~Matthes, and D.~S. McCormick.
\newblock A lagrangian scheme for the solution of nonlinear diffusion equations
  using moving simplex meshes.
\newblock {\em Journal of Scientific Computing}, 75(3):1463--1499, 2018.

\bibitem{CG-CV2019}
J.~A. Carrillo, D.~Gómez-Castro, and J.~L. Vázquez.
\newblock A fast regularisation of a newtonian vortex equation.
\newblock {\em Preprint on arXiv (https://arxiv.org/abs/1912.00912)}, 2019.

\bibitem{CG-CV2020}
J.~A. Carrillo, D.~Gómez-Castro, and J.~L. Vázquez.
\newblock Vortex formation for a non-local interaction model with newtonian
  repulsion and superlinear mobility.
\newblock {\em Preprint on arXiv (https://arxiv.org/abs/2007.01185)}, 2020.

\bibitem{CHPW}
J.~A. Carrillo, Y.~Huang, F.~S. Patacchini, and G.~Wolansky.
\newblock Numerical study of a particle method for gradient flows.
\newblock {\em Kin. Rel. Mod.}, 10(3):613--641, 2017.

\bibitem{CLSS2010}
J.~A. Carrillo, S.~Lisini, G.~Savar\'{e}, and D.~Slep\v{c}ev.
\newblock Nonlinear mobility continuity equations and generalized displacement
  convexity.
\newblock {\em J. Funct. Anal.}, 258(4):1273--1309, 2010.

\bibitem{CRP2013}
J.~A. Carrillo, S.~Martin, and V.~Panferov.
\newblock A new interaction potential for swarming models.
\newblock {\em Phys. D}, 260:112--126, 2013.

\bibitem{CdiFL2006}
J.~A. Carrillo, M.~Di~Francesco, and C.~Lattanzio.
\newblock Contractivity of {W}asserstein metrics and asymptotic profiles for
  scalar conservation laws.
\newblock {\em J. Differential Equations}, 231(2):425--458, 2006.

\bibitem{CMW2021}
J.~A. Carrillo, D.~Matthes, and M.-T. Wolfram.
\newblock Chapter 4 - lagrangian schemes for wasserstein gradient flows.
\newblock In A.~Bonito and R.~H. Nochetto, editors, {\em Geometric Partial
  Differential Equations - Part II}, volume~22 of {\em Handbook of Numerical
  Analysis}, pages 271--311. Elsevier, 2021.

\bibitem{CR2006}
F.~A. C.~C. Chalub and J.~F. Rodrigues.
\newblock A class of kinetic models for chemotaxis with threshold to prevent
  overcrowding.
\newblock {\em Port. Math. (N.S.)}, 63(2):227--250, 2006.

\bibitem{diFFRo2017}
M.~Di~Francesco, S.~Fagioli, and M.~D. Rosini.
\newblock Deterministic particle approximation of scalar conservation laws.
\newblock {\em Boll. Unione Mat. Ital.}, 10(3):487--501, 2017.

\bibitem{diFSt}
M.~Di~Francesco and G.~Stivaletta.
\newblock Convergence of the follow-the-leader scheme for scalar conservation
  laws with space dependent flux.
\newblock {\em Dis. Cont. Dyn. Syst.}, 40(1):233--266, 2020.

\bibitem{diFra2016}
M.~Di~Francesco.
\newblock Scalar conservation laws seen as gradient flows: known results and
  new perspectives.
\newblock In {\em Gradient flows: from theory to application}, volume~54 of
  {\em ESAIM Proc. Surveys}, pages 18--44. EDP Sci., Les Ulis, 2016.

\bibitem{diFFRa2019}
M.~Di~Francesco, S.~Fagioli, and E.~Radici.
\newblock Deterministic particle approximation for nonlocal transport equations
  with nonlinear mobility.
\newblock {\em J. Differential Equations}, 266(5):2830--2868, 2019.

\bibitem{diFR2008}
M.~Di~Francesco and J.~Rosado.
\newblock Fully parabolic {K}eller-{S}egel model for chemotaxis with prevention
  of overcrowding.
\newblock {\em Nonlinearity}, 21(11):2715--2730, 2008.

\bibitem{DNS2009}
J.~Dolbeault, B.~Nazaret, and G.~Savar\'{e}.
\newblock A new class of transport distances between measures.
\newblock {\em Calc. Var. Partial Differential Equations}, 34(2):193--231,
  2009.

\bibitem{DGM2013}
W.~Dreyer, C.~Guhlke, and R.~Müller.
\newblock Overcoming the shortcomings of the nernst–planck model.
\newblock {\em Phys. Chem. Chem. Phys.}, 15:7075--7086, 2013.

\bibitem{DB2015}
L.~Dyson and R.~E. Baker.
\newblock The importance of volume exclusion in modelling cellular migration.
\newblock {\em Journal of Mathematical Biology}, 71(3):691--711, 2015.

\bibitem{EPSS2021}
A.~Esposito, F.~S. Patacchini, A.~Schlichting, and D.~Slep{\v c}ev.
\newblock Nonlocal-interaction equation on graphs: Gradient flow structure and
  continuum limit.
\newblock {\em Archive for Rational Mechanics and Analysis}, 2021.

\bibitem{EGO2016}
E.~Esselborn, N.~Gigli, and F.~Otto.
\newblock Algebraic contraction rate for distance between entropy solutions of
  scalar conservation laws.
\newblock {\em J. Math. Anal. Appl.}, 435(2):1525--1551, 2016.

\bibitem{FRa2018}
S.~Fagioli and E.~Radici.
\newblock Solutions to aggregation-diffusion equations with nonlinear mobility
  constructed via a deterministic particle approximation.
\newblock {\em Math. Models Methods Appl. Sci.}, 28(9):1801--1829, 2018.

\bibitem{GL1998}
G.~Giacomin and J.~L. Lebowitz.
\newblock Phase segregation dynamics in particle systems with long range
  interactions. {II}. {I}nterface motion.
\newblock {\em SIAM J. Appl. Math.}, 58(6):1707--1729, 1998.

\bibitem{GT1}
L.~Gosse and G.~Toscani.
\newblock Identification of asymptotic decay to self-similarity for one-
  dimensional filtration equations.
\newblock {\em SIAM J. Numer. Anal.}, 43:2590–2606, 2006.

\bibitem{GT2}
L.~Gosse and G.~Toscani.
\newblock Lagrangian numerical approximations to one-dimensional
  convolution-diffusion equations.
\newblock {\em SIAM J. Sci. Comput.}, 28:1203–1227, 2006.

\bibitem{Kani1995}
G.~Kaniadakis.
\newblock Generalized {B}oltzmann equation describing the dynamics of bosons
  and fermions.
\newblock {\em Phys. Lett. A}, 203(4):229--234, 1995.

\bibitem{KR2003}
K.~H. Karlsen and N.~H. Risebro.
\newblock On the uniqueness and stability of entropy solutions of nonlinear
  degenerate parabolic equations with rough coefficients.
\newblock {\em Discrete Contin. Dyn. Syst.}, 9(5):1081--1104, 2003.

\bibitem{Kru1970}
S.~N. Kru\v{z}kov.
\newblock First order quasilinear equations with several independent variables.
\newblock {\em Mat. Sb. (N.S.)}, 81 (123):228--255, 1970.

\bibitem{LZ2000}
F.~Lin and P.~Zhang.
\newblock On the hydrodynamic limit of {G}inzburg-{L}andau vortices.
\newblock {\em Discrete Contin. Dynam. Systems}, 6(1):121--142, 2000.

\bibitem{LM2010}
S.~Lisini and A.~Marigonda.
\newblock On a class of modified {W}asserstein distances induced by concave
  mobility functions defined on bounded intervals.
\newblock {\em Manuscripta Math.}, 133(1-2):197--224, 2010.

\bibitem{Mielke2016}
A.~Mielke.
\newblock On evolutionary {$\varGamma$}-convergence for gradient systems.
\newblock In {\em Macroscopic and large scale phenomena: coarse graining, mean
  field limits and ergodicity}, volume~3 of {\em Lect. Notes Appl. Math.
  Mech.}, pages 187--249. Springer, [Cham], 2016.

\bibitem{Ott1999}
F.~Otto.
\newblock Evolution of microstructure in unstable porous media flow: a
  relaxational approach.
\newblock {\em Comm. Pure Appl. Math.}, 52(7):873--915, 1999.

\bibitem{PRST2020}
M.~A. Peletier, R.~Rossi, G.~Savaré, and O.~Tse.
\newblock Jump processes as generalized gradient flows.
\newblock {\em Preprint on arXiv (https://arxiv.org/abs/2006.10624)}, 2020.

\bibitem{RoSa2003}
R.~Rossi and G.~Savar{\'e}.
\newblock Tightness, integral equicontinuity and compactness for evolution
  problems in {B}anach spaces.
\newblock {\em Ann. Sc. Norm. Super. Pisa Cl. Sci. (5)}, 2(2):395--431, 2003.

\bibitem{SS2004}
E.~Sandier and S.~Serfaty.
\newblock Gamma-convergence of gradient flows with applications to
  {G}inzburg-{L}andau.
\newblock {\em Comm. Pure Appl. Math.}, 57(12):1627--1672, 2004.

\bibitem{SV2014}
S.~Serfaty and J.~L. V\'{a}zquez.
\newblock A mean field equation as limit of nonlinear diffusions with
  fractional {L}aplacian operators.
\newblock {\em Calc. Var. Partial Differential Equations}, 49(3-4):1091--1120,
  2014.

\bibitem{Slep2008}
D.~Slep\v{c}ev.
\newblock Coarsening in nonlocal interfacial systems.
\newblock {\em SIAM J. Math. Anal.}, 40(3):1029--1048, 2008.

\bibitem{SMR01}
J.~Stockie, J.~Mackenzie, and R.~Russell.
\newblock A moving mesh method for one-dimensional hyperbolic conservation
  laws.
\newblock {\em SIAM Journal on Scientific Computing}, 22:1791--1813, 2001.

\bibitem{Vol1967}
A.~I. Vol'pert.
\newblock Spaces {${\rm BV}$} and quasilinear equations.
\newblock {\em Mat. Sb. (N.S.)}, 73 (115):255--302, 1967.

\end{thebibliography}

\end{document}